\numberwithin{theorem}{section}
\newtheorem{theoremm}{Theorem}\numberwithin{theoremm}{subsection}
\newtheorem{deffinition}[theoremm]{Definition}
\newtheorem{lemmma}[theoremm]{Lemma}
\newtheorem{corrollary}[theoremm]{Corollary}
\newtheorem{propposition}[theoremm]{Proposition}
\theoremstyle{remark}
\newtheorem{remmark}[theoremm]{Remark}
\newtheorem{exxample}[theoremm]{Example}
\newtheorem{quesstion}[theoremm]{Question}
\begin{document}

\title{Classification of finite group automorphisms with a large cycle}

\author{Alexander Bors\thanks{University of Salzburg, Mathematics Department, Hellbrunner Stra{\ss}e 34, 5020 Salzburg, Austria. \newline E-mail: \href{mailto:alexander.bors@sbg.ac.at}{alexander.bors@sbg.ac.at} \newline The author is supported by the Austrian Science Fund (FWF):
Project F5504-N26, which is a part of the Special Research Program \enquote{Quasi-Monte Carlo Methods: Theory and Applications}. \newline 2010 \emph{Mathematics Subject Classification}: primary: 20B25, 20G40, 20D45, 20K01, 20K30, secondary: 11A07, 12E20, 15A21, 20F05, 37P99. \newline \emph{Key words and phrases:} finite groups, automorphisms, cycle structure, large cycles}}

\date{March 31, 2015}

\maketitle

\begin{abstract}
Let $\psi$ be a permutation of a finite set $X$. We define $\lambda(\psi)$ to be the largest fraction of elements of $X$ lying on a single cycle of $\psi$. For a finite group $G$, we define $\lambda(G)$ to be the maximum among the values $\lambda(\alpha)$, where $\alpha$ runs through the automorphisms of $G$. In this paper, we develop tools to deal with questions related to $\lambda$-values of finite groups and of their automorphisms. As a consequence, we will be able to give a classification, up to a natural notion of isomorphism, of those pairs $(G,\alpha)$ where $G$ is a finite group, $\alpha$ is an automorphism of $G$ and $\lambda(\alpha)\geq\frac{1}{2}$.
\end{abstract}

\section{Introduction}\label{sec1}

\subsection{Some background, definitions and the main results}\label{subsec1P1}

By results of Miller \cite{Mil29b} from 1929, for any $\alpha\in\{-1,2,3\}$ and any group $G$, if the map $G\rightarrow G,g\mapsto g^{\alpha}$ is an automorphism of $G$, then $G$ is abelian (the argument for $\alpha=3$ and finite $G$ given there can easily be adapted to general $G$), whereas for all $\alpha\in\mathbb{Z}\setminus\{-1,0,1,2,3\}$, there exist finite nonabelian groups for which the power map with respect to $\alpha$ is an automorphism. For finite groups $G$, the first result can be reformulated as follows: Fix $\alpha\in\mathbb{Z}$, set, for an automorphism $\psi$ of $G$, $\mathrm{l}_{\alpha}(\psi):=\frac{1}{|G|}\cdot|\{g\in G\mid \psi(g)=g^{\alpha}\}|$, and $\mathrm{l}_{\alpha}(G):=\mathrm{max}_{\psi\in\mathrm{Aut}(G)}{\mathrm{l}_{\alpha}(\psi)}$. Then for $\alpha\in\{-1,2,3\}$, the condition $\mathrm{l}_{\alpha}(G)=1$ implies abelianity of $G$.

This led to a series of papers investigating conditions on finite groups $G$ of the form $\mathrm{l}_{\alpha}(G)>\rho$ for $\alpha\in\{-1,2,3\}$ and $\rho\in\left(0,1\right)$. Each of the conditions $\mathrm{l}_{-1}(G)>\frac{3}{4}$, $\mathrm{l}_2(G)>\frac{1}{2}$ and $\mathrm{l}_3(G)>\frac{3}{4}$ implies that $G$ is abelian, and these bounds are sharp, see \cite{Mil29a,Lie73a,Mac75a}. Restricting, for some prime $p$, one's attention to the class $\mathcal{G}_p$ of finite groups whose least prime divisor is $p$, the bounds can be improved: For $p>2$, $\alpha\in\{-1,2,3\}$ and $G\in\mathcal{G}_p$, the condition $\mathrm{l}_{\alpha}(G)>\frac{1}{p}$ implies abelianity of $G$, as does $\mathrm{l}_2(G)>\frac{1}{2}$ for $G\in\mathcal{G}_2$, see \cite{LM73a,Lie73a,Mac75a}. Classifications are available for the finite groups $G$ with $\mathrm{l}_{-1}(G)>\frac{1}{2}$ \cite{LM72a}, $\mathrm{l}_2(G)\geq\frac{5}{12}$ \cite{Zim90a} and $\mathrm{l}_3(G)=\frac{3}{4}$ \cite{Mac75a}, and also, for odd primes $p$, for the $G\in\mathcal{G}_p$ with $\mathrm{l}_{-1}(G)=\frac{1}{p}$ \cite{LM73a}, $\mathrm{l}_2(G)=\frac{1}{p}$ \cite{Lie73a} and $\mathrm{l}_3(G)=\frac{1}{p}$ \cite{DM89a} as well as for the $G\in\mathcal{G}_2$ with $\mathrm{l}_2(G)=\frac{1}{2}$ \cite{Lie73a}. Other notable results on such conditions are: All $\mathrm{l}_{-1}$-values greater than $\frac{1}{2}$ are of the form $\frac{k+1}{2k}$ for some positive integer $k$ \cite{LM72a}, $\mathrm{l}(G)>\frac{4}{15}$ implies that $G$ is solvable (and this bound is sharp) \cite{Pot88a}, and the derived length of a finite solvable group $G$ is (explicitly) bounded in terms of $\mathrm{l}_{-1}(G)$ \cite{Heg05a}.

The values $\mathrm{l}_{\alpha}(G)$ are maximum fractions of elements of the finite group $G$ showing a certain behavior under an appropriate automorphism of $G$. In this paper, we investigate such a fraction of a different nature, namely the maximum fraction of elements of $G$ lying on a single automorphism cycle.

\begin{deffinition}\label{lambdaDef}
(1) Let $X$ be a finite set, $\psi$ a permutation on $X$. We define $\Lambda(\psi)$ as the largest length of one of the disjoint cycles into which $\psi$ decomposes, and set $\lambda(\psi):=\frac{1}{|X|}\Lambda(\psi)$.

(2) Let $G$ be a finite group. We define $\Lambda(G):=\mathrm{max}_{\alpha\in\mathrm{Aut}(G)}{\Lambda(\alpha)}$ and $\lambda(G):=\frac{1}{|G|}{\Lambda(G)}$.

(3) For a fixed number $\rho\in\left(0,1\right)$, we say that a finite group $G$ satisfies the \textbf{(automorphism) $\rho$-large-cycle-condition} (\textbf{$\rho$-LCC}) if and only if $\lambda(G)>\rho$.
\end{deffinition}

Note that for nontrivial $G$, we always have $\Lambda(G)\leq|G|-1$ (and thus $\lambda(G)<1$), since every automorphism fixes the identity element (actually, Horo\v{s}evski\u{\i} \cite[Theorem 2]{Hor74a} showed that for nontrivial $G$, $|G|-1$ even is an upper bound on the order of any automorphism of $G$).

\begin{exxample}\label{lambdaEx}
(1) $\lambda$-values of primary cyclic groups: Let $p$ be an odd prime and $k\in\mathbb{N}^+$. Then $\lambda(\mathbb{Z}/p^k\mathbb{Z})=\frac{p^{k-1}(p-1)}{p^k}=1-\frac{1}{p}$, since the largest automorphism order of $\mathbb{Z}/p^k\mathbb{Z}$ is $\phi(p^k)=p^{k-1}(p-1)$ and this, as for any finite nilpotent group (see \cite[Corollary 1]{Hor74a}) coincides with the $\Lambda$-value. Similarly, we have $\lambda(\mathbb{Z}/2\mathbb{Z})=\lambda(\mathbb{Z}/4\mathbb{Z})=\frac{1}{2}$, whereas for $k\geq 3$, $\lambda(\mathbb{Z}/2^k\mathbb{Z})=\frac{1}{4}$.

(2) $\lambda$-values of finite elementary abelian groups: Let $p$ be any prime and $k\in\mathbb{N}^+$. Then $\lambda((\mathbb{Z}/p\mathbb{Z})^k)=\frac{p^k-1}{p^k}=1-\frac{1}{p^k}$, since the upper bound $p^k-1$ on the $\Lambda$-value is attained in any finite elementary abelian group. To see this, one can use the following well-known construction: Let $\mathbb{F}_{p^k}$ denote the finite field with $p^k$ elements, and let $\alpha$ be a generator of the group of units $\mathbb{F}_{p^k}^{\ast}$. Then multiplication with $\alpha$ is an automorphism of the underlying additive group (which is isomorphic to $(\mathbb{Z}/p\mathbb{Z})^k$) shifting all nonzero elements in one cycle.
\end{exxample}

This gives us examples of finite groups with $\lambda$-value in $\left[\frac{1}{2},1\right]$. By our main results, we will provide a complete answer to the following question:

\begin{quesstion}\label{ques}
Which are the finite groups $G$ with $\lambda(G)\geq\frac{1}{2}$? That is, which finite groups admit an automorphism with a cycle filling at least half of the group?
\end{quesstion}

Our results are actually more explicit than just listing the isomorphism types of finite groups $G$ with $\lambda(G)\geq\frac{1}{2}$; instead, we will give the corresponding list of \enquote{isomorphism types of group automorphisms}. More precisely, we will work with the following definition (which is from the author's preprint \enquote{On the dynamics of endomorphisms of finite groups}, see \url{http://arxiv.org/abs/1409.3756}):

\begin{deffinition}\label{fdgDef}
A \textbf{finite dynamical group} (\textbf{FDG}) is a finite group $G$ together with an endomorphism $\varphi$ of $G$. It is called \textbf{periodic} if and only if $\varphi$ is an automorphism of $G$, and the $\lambda$-value of a periodic FDG $(G,\alpha)$ is understood as the $\lambda$-value of $\alpha$. A periodic FDG $(G,\alpha)$ is called \textbf{$\lambda$-maximal} if and only if $\lambda(\alpha)=\lambda(G)$. An FDG $(G,\varphi)$ is called \textbf{(elementary) abelian} if and only if $G$ is (elementary) abelian, \textbf{nonabelian} if and only if it is not abelian, and for a prime $p$, $(G,\varphi)$ is called a \textbf{$p$-FDG} if and only if $G$ is a $p$-group. For FDGs $(G_1,\varphi_1),\ldots,(G_r,\varphi_r)$, their \textbf{product $\prod_{i=1}^r{(G_i,\varphi_i)}$} is defined as the FDG $(\prod_{i=1}^r{G_i},\prod_{i=1}^r{\varphi_i})$, where $\prod_{i=1}^r{\varphi_i}$ is the endomorphism of $\prod_{i=1}^r{G_i}$ sending $(g_1,\ldots,g_r)\mapsto(\varphi_1(g_1),\ldots,\varphi_r(g_r))$.
\end{deffinition}

FDGs can be seen as algebraic structures obtained by expanding (in the model-theoretic sense) the group structure of $G$ by an additional unary operation. As such, they have a natural notion of homomorphism and isomorphism: For FDGs $(G,\varphi)$ and $(H,\psi)$, a \textit{homomorphism between $(G,\varphi)$ and $(H,\psi)$} is a group homomorphism $f:G\rightarrow H$ such that $f\circ\varphi=\psi\circ f$, and an \textit{isomorphism between $(G,\varphi)$ and $(H,\psi)$} is a homomorphism which is a bijection between the underlying sets (whose inverse then automatically is a homomorphism between $(H,\psi)$ and $(G,\varphi)$). In particular, for two endomorphisms $\varphi_1,\varphi_2$ of a fixed finite group $G$, the FDGs $(G,\varphi_1)$ and $(G,\varphi_2)$ are isomorphic if and only if $\varphi_1$ and $\varphi_2$ are \textit{conjugate in $\mathrm{End}(G)$}, i.e., there exists an $\alpha\in\mathrm{Aut}(G)=\mathrm{End}(G)^{\ast}$ (the group of units in $\mathrm{End}(G)$) such that $\alpha\circ\varphi_1\circ\alpha^{-1}=\varphi_2$. Note that FDGs can be seen as expansions of the following:

\begin{deffinition}\label{fdsDef}
A \textbf{finite dynamical system} (\textbf{FDS}) is a finite set $S$ together with a function $f:S\rightarrow S$. It is called \textbf{periodic} if and only if $f$ is a permutation of $S$. For an element $s\in S$, the \textbf{orbit of $s$ under $f$} is the set $\{f^n(s)\mid n\in\mathbb{N}\}$. If $(S_1,f_1),\ldots,(S_r,f_r)$ are FDSs, their \textbf{disjoint union} is the FDS $(\sqcup_{i=1}^r{S_i},\sqcup_{i=1}^r{f_i})$, where $\sqcup$ denotes the set-theoretic disjoint union.
\end{deffinition}

Hence FDSs are just finite sets endowed with one unary operation, a \textit{homomorphism between FDSs $(S_1,f_1)$ and $(S_2,f_2)$} is a map $h:S_1\rightarrow S_2$ such that $h\circ f_1=f_2\circ h$, and an \textit{FDS isomorphism} is a bijective FDS homomorphism.

Our goal will be to classify, up to isomorphism, the periodic FDGs with $\lambda$-value at least $\frac{1}{2}$. Example \ref{lambdaEx} provides us with examples of such FDGs among those from Definition \ref{specialFdgDef} below. Recall that for every field $K$ and every monic polynomial $Q(X)=X^n+a_{n-1}X^{n-1}+\cdots+a_1X+a_0\in K[X]$ of degree $n$, the $(n\times n)$-matrix \[\begin{pmatrix}0 & 0 & \cdots & 0 & -a_0 \\ 1 & 0 & \cdots & 0 & -a_1 \\ 0 & 1 & \cdots & 0 & -a_2 \\ \vdots & \vdots & \cdots & \vdots & \vdots \\ 0 & 0 & \cdots & 1 & -a_{n-1}\end{pmatrix}\] with entries in $K$ is called the \textit{companion matrix of $Q(X)$}, denoted by $\mathrm{M}(Q(X))$.

\begin{deffinition}\label{specialFdgDef}
(1) Let $m\in\mathbb{N}^+$ and $a\in\mathbb{N}$. Then $\mathcal{M}(m,a)$ denotes the FDG $\mathbb{Z}/m\mathbb{Z}$ together with the endomorphism sending each element to its $a$-fold.

(2) Let $m,d\in\mathbb{N}^+$ and let $A$ be a $(d\times d)$-matrix over the ring $\mathbb{Z}/m\mathbb{Z}$. Then $\mathcal{V}(m,A)$ denotes the FDG $(\mathbb{Z}/m\mathbb{Z})^d$, viewed as the free $\mathbb{Z}/m\mathbb{Z}$-module of rank $d$, together with the endomorphism represented with respect to the standard basis by the matrix $A$.

(3) Let $p$ be a prime and let $P(X)\in\mathbb{F}_p[X]$ be monic. We set $\mathcal{V}(P(X)):=\mathcal{V}(p,\mathrm{M}(P(X)))$.
\end{deffinition}

The notation from Definition \ref{specialFdgDef} is motivated by names of classical pseudorandom number generators: $\mathcal{M}$ originates from \enquote{multiplicative congruential generator} and $\mathcal{V}$ from \enquote{vector generator}; see Subsection \ref{subsec4P1} for some more connections of our results with pseudorandom number generation. It is now time to present the main results of this paper, Theorem \ref{abelianTheo}, Corollary \ref{strictCor}, Theorem \ref{equalTheo} and Theorem \ref{denseTheo}:

\begin{theoremm}\label{abelianTheo}
Finite groups $G$ with $\lambda(G)>\frac{1}{2}$ are abelian.
\end{theoremm}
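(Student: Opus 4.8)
The plan is to show that if $G$ is a finite nonabelian group, then $\lambda(G)\leq\frac{1}{2}$, by analyzing what an automorphism cycle of length greater than $\frac{1}{2}|G|$ would force upon the structure of $G$. The key observation is that if $\alpha\in\mathrm{Aut}(G)$ has a cycle $C$ with $|C|>\frac{1}{2}|G|$, then for any two elements $g,h$ of $G$, the translates $gC$ and $hC$ (or rather the cosets they meet) must overlap substantially, which is a strong combinatorial constraint. More precisely, I would first fix a single element $g_0$ lying on the large cycle $C$ and consider the orbit $\{\alpha^n(g_0):n\in\mathbb{Z}\}=C$. Since $|C|>\frac{1}{2}|G|$, by a counting/pigeonhole argument any subset of $G$ of size greater than $\frac{1}{2}|G|$ must intersect $C$ nontrivially with every one of its translates; this is the lever I would use to pin down commutativity.

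The central mechanism I would pursue is to exploit the identity relating $\alpha$ to conjugation and commutators. Concretely, for elements $x,y$ on the cycle, write $x=\alpha^i(g_0)$ and $y=\alpha^j(g_0)$, and study the behavior of the commutator map or of the product $xy$ under $\alpha$. The hope is that the large-cycle hypothesis forces many products $\alpha^i(g_0)\alpha^j(g_0)$ to again lie on $C$, and by comparing $\alpha^i(g_0)\alpha^j(g_0)$ with $\alpha^j(g_0)\alpha^i(g_0)$ one derives that a large fraction of pairs commute. A classical result of this flavor states that if more than $\frac{5}{8}$ of the pairs in a finite group commute then the group is abelian; so if the cycle hypothesis can be converted into a lower bound on the commuting probability exceeding $\frac{5}{8}$, abelianity follows. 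I would try to realize exactly this conversion: use $\lambda(G)>\frac{1}{2}$ to show that the centralizer of a generic cycle element is large, forcing the commuting probability past the threshold.

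An alternative and perhaps cleaner route, which I would develop in parallel, is to argue via the fixed-point structure and the powers of $\alpha$. If $C$ is the large cycle of length $\ell>\frac{1}{2}|G|$, consider the subgroup $H$ generated by $C$; since $C$ is so large, $H=G$, so $G$ is generated by a single $\alpha$-orbit. One can then study how $\alpha$ permutes the cosets of a putative proper normal subgroup or of the center $Z(G)$, and argue that the large cycle cannot be compatible with a nontrivial commutator structure: the point is that $\alpha$ must permute $Z(G)$ and its cosets, and a cycle spanning more than half the group cannot fit inside the coset structure unless the quotient $G/Z(G)$ is very small, ideally trivial or cyclic, the latter of which forces $G$ abelian. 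I would reduce to the case where $G/Z(G)$ is small and then rule out the remaining small nonabelian possibilities (the smallest being of order $6$ and $8$) by direct inspection of their automorphism groups and cycle structures.

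The main obstacle I anticipate is making the leap from the purely combinatorial statement \enquote{one cycle has length $>\frac{1}{2}|G|$} to an algebraic statement about commutators or centralizers; a single long orbit does not obviously interact with the group multiplication, and the cycle is an orbit of $\alpha$ acting on the underlying \emph{set}, not a priori respecting any subgroup. Bridging this gap — likely by analyzing the induced action of $\alpha$ on cosets of well-chosen characteristic subgroups and showing that a long cycle forces these quotients to be cyclic or trivial — is where the real work lies, and I expect the argument to require careful bookkeeping of how cycle lengths of $\alpha$ on $G$ relate to cycle lengths of the induced automorphisms on subquotients, together with a final reduction handling the sporadic small nonabelian groups by hand.
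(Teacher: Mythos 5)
Your proposal correctly identifies the central difficulty --- converting the purely set-theoretic hypothesis \enquote{one $\alpha$-orbit has length $>\frac{1}{2}|G|$} into algebraic information about commutators or centralizers --- but it does not actually supply the bridge, and the two routes you sketch for it do not work as stated. The commuting-probability route gives no mechanism: nothing in the large-cycle hypothesis obviously bounds $|\mathrm{C}_G(g)|$ from below for elements $g$ on the cycle, and there is no apparent way to push the commuting probability past $\frac{5}{8}$. The center-quotient route rests on a false premise: passing to $G/\zeta G$ only yields that the induced automorphism $\tilde{\alpha}$ again satisfies $\lambda(\tilde{\alpha})\geq\lambda(\alpha)>\frac{1}{2}$ (a cycle of $\alpha$ projects onto a cycle of $\tilde{\alpha}$ whose length share does not decrease), and this does \emph{not} force $G/\zeta G$ to be small or cyclic --- for instance $(\mathbb{Z}/2\mathbb{Z})^n$ admits automorphisms with $\lambda$-value $1-2^{-n}$ for every $n$, so the quotient could a priori be an arbitrarily large elementary abelian $2$-group. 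Your plan therefore cannot terminate in a finite check of small nonabelian groups.

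The missing idea, which is the engine of the paper's proof, is an \emph{inversion} argument: since the domain of the unique largest cycle $\sigma$ and its pointwise inverse are both subsets of $G\setminus\{1_G\}$ of size more than half of $|G|$, they intersect, so $\alpha^k(x)=x^{-1}$ for some $x\in\mathrm{dom}(\sigma)$ and some $k$; because $\alpha^k$ commutes with $\alpha$, the same power $\alpha^k$ inverts \emph{every} element of $\mathrm{dom}(\sigma)$, and together with $1_G$ this shows that $G$ has an automorphism inverting more than half of its elements, i.e.\ $\mathrm{l}(G)>\frac{1}{2}$. This is what unlocks the structure theorem of Liebeck and MacHale for such groups (three explicit types), after which the paper combines the quotient-transfer inequality with the classification of elementary abelian $2$-FDGs of $\lambda$-value $\geq\frac{1}{2}$ to force $\mathrm{l}(G)>\frac{3}{4}$ in each type, contradicting nonabelianity by Miller's theorem. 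Without the inversion step (or some substitute for it), your outline remains a statement of intent rather than a proof.
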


This bound is sharp, since there exist finite nonabelian groups whose $\lambda$-value equals $\frac{1}{2}$, for example finite dihedral groups (see also Theorem \ref{equalTheo}). Using Theorem \ref{abelianTheo} as well as methods for discussing LCCs in finite abelian groups, we will be able to show the classification result Corollary \ref{strictCor} below. Recall that for a finite field $K$, an irreducible polynomial $P(X)\in K[X]$ of degree $d\geq 1$ is said to be \textit{primitive} if and only if one of (and hence all) its roots in the algebraic closure of $K$ have multiplicative order $|K|^d-1$. Fix a prime $p$. If $P_1(X),\ldots,P_r(X)$ are monic primitive irreducible polynomials over $\mathbb{F}_p$ such that, setting $d_i:=\mathrm{deg}(P_i(X))$, the tuple $(d_1,\ldots,d_r)$ is nondecreasing, we call $(d_1,\ldots,d_r)$ the \textit{Frobenius type of $\prod_{i=1}^r{\mathcal{V}(P_i(X))}$} (or of any FDG isomorphic to it). Note that elementary abelian $p$-FDGs with the same Frobenius type are isomorphic as FDSs, but not necessarily as FDGs. Indeed, it follows from uniqueness of rational canonical forms (see Subsection \ref{subsec2P4}) that if we define, for $d\in\mathbb{N}^+$, $\phi_p(d):=\frac{\phi(p^d-1)}{d}$, where $\phi$ denotes Euler's totient function, i.e., as the number of primitive irreducible polynomials of degree $d$ over $\mathbb{F}_p$ (see \cite[Theorem 3.5]{LN97a}), then the number of isomorphism types of elementary abelian $p$-FDGs $(G,\alpha)$ of Frobenius type $(d_1,\ldots,d_r)$ equals $\prod_{i=1}^r{\phi_p(d_i)}$.

\begin{corrollary}\label{strictCor}
The following is the complete (up to isomorphism) list of periodic FDGs $(G,\alpha)$ whose $\lambda$-value exceeds $\frac{1}{2}$, together with their $\lambda$-values and the information whether they are $\lambda$-maximal or not:

\begin{enumerate}
\item The FDGs \[(G,\alpha)=\prod_{i=1}^r{\mathcal{V}(P_i(X))},\] where $P_1(X),\ldots,P_r(X)$ are pairwise distinct monic primitive irreducible polynomials over $\mathbb{F}_2$ and, setting $d_i:=\mathrm{deg}(P_i(X))$ for $i=1,\ldots,r$, the numbers $d_1,\ldots,d_r$ are pairwise coprime and such that $\prod_{i=1}^r{(1-\frac{1}{2^{d_i}})}>\frac{1}{2}$. The left-hand side of that inequality equals $\lambda(\alpha)$. The underlying group of such an FDG is $(\mathbb{Z}/2\mathbb{Z})^{d_1+\cdots+d_r}$, having $\lambda$-value $1-\frac{1}{2^{d_1+\cdots+d_r}}$. Such an FDG is $\lambda$-maximal if and only if $r=1$ and $P_1(X)$ is primitive. By uniqueness of rational canonical forms, each tuple $(P_1(X),\ldots,P_r(X))$ as above yields a different FDG isomorphism type.

\item For all odd primes $p$, all $m\in\mathbb{N}^+$ and all primitive roots $a$ modulo $p^m$: \[(G,\alpha)=\mathcal{M}(p^m,a).\] The underlying group is $\mathbb{Z}/p^m\mathbb{Z}$, with $\lambda$-value $1-\frac{1}{p}$, and all these FDGs are $\lambda$-maximal.

\item For all odd primes $p$, all $m\in\mathbb{N}^+$ and all monic primitive irreducible polynomials $P(X)$ of degree $m$ over $\mathbb{F}_p$: \[(G,\alpha)=\mathcal{V}(P(X)).\] The underlying group is $(\mathbb{Z}/p\mathbb{Z})^m$, having $\lambda$-value $1-\frac{1}{p^m}$, and all these FDGs are $\lambda$-maximal. By uniqueness of rational canonical forms, each $P(X)$ yields a different FDG isomorphism type.

\item For all odd primes $p$ and all generators $g$ of $\mathbb{F}_p^{\ast}$: \[(G,\alpha)=\mathcal{V}((X-g)^2),\] with $\lambda$-value $1-\frac{1}{p}$. The underlying group is $(\mathbb{Z}/p\mathbb{Z})^2$, and these FDGs are not $\lambda$-maximal.

\item $(G,\alpha)$ which are products of one FDG of type (1), say of Frobenius type $(d_1,\ldots,d_r)$, with one FDG of type (2), (3) or (4) such that the $\Lambda$-values of the two factors are coprime and the product of the two $\lambda$-values is greater than $\frac{1}{2}$; that product value then also is the $\lambda$-value of the FDG product. As for $\lambda$-maximality: If $G_p$ denotes the Sylow $p$-subgroup of $G$ for the unique odd prime divisor $p$ of $|G|$, set \[\mathrm{L}(G_p):=\begin{cases}\{p^{m-1}(p-1)\}, & \text{ if }G_p\cong\mathbb{Z}/p^m\mathbb{Z}\text{ with }m\geq 1, \\ \{p^m-1\}, & \text{ if }G_p\cong(\mathbb{Z}/p\mathbb{Z})^m\text{ with }m>2, \\ \{p^2-1,p^2-p\}, & \text{ if }G_p\cong(\mathbb{Z}/p\mathbb{Z})^2.\end{cases}\] Then $(G,\alpha)$ is $\lambda$-maximal if and only if there exists no other additive decomposition $d'_1+\cdots+d'_s$ of $d_1+\cdots+d_r$ and no element $L\in\mathrm{L}(G_p)$ such that the $d'_j$ are pairwise coprime, each $2^{d'_j}-1$ is coprime with $L$ and $\prod_{j=1}^s{(2^{d'_j}-1)}\cdot L>\prod_{i=1}^r{(2^{d_i}-1)}\cdot\Lambda(\alpha_p)$.
\end{enumerate}
\end{corrollary}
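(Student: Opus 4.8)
The plan is to combine Theorem \ref{abelianTheo} with a reduction to primary components and a careful bookkeeping of how the cycle lengths of a product automorphism are assembled from those of its factors. Since $\lambda(\alpha)>\frac{1}{2}$ forces $\lambda(G)>\frac{1}{2}$, Theorem \ref{abelianTheo} gives that $G$ is abelian. I would then write $G=\prod_p G_p$ as the direct product of its (characteristic) Sylow subgroups, so that $\alpha=\prod_p\alpha_p$ and $(G,\alpha)=\prod_p(G_p,\alpha_p)$ as FDGs. The orbit length of $(g_p)_p$ under $\alpha$ is $\mathrm{lcm}_p\ell_p(g_p)$, where $\ell_p(g_p)$ is the orbit length of $g_p$ under $\alpha_p$; hence $\Lambda(\alpha)=\max\{\mathrm{lcm}_p(\ell_p)\}$ over all tuples of orbit lengths occurring in the factors. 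In particular $\lambda(\alpha)\leq\prod_p\lambda(\alpha_p)$, and whether this bound is approached is governed entirely by coprimality of the long orbit lengths across the factors.

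The decisive observation will be a parity dichotomy for odd components. For an odd prime $p$, in $(G_p,\alpha_p)$ every orbit of length exceeding $\frac{1}{2}|G_p|$ has even length, while every odd-length orbit has length at most $\frac{1}{2}|G_p|$; this follows from the description of orbit lengths of automorphisms of abelian $p$-groups (divisors of quantities such as $p^{m-1}(p-1)$ and $p^m-1$, all even) together with an elementary estimate excluding large odd divisors. Hence, if two distinct odd primes $p\neq q$ divided $|G|$, then for every element the lengths chosen in the two odd components are either both even, so their least common multiple loses a factor $2$, or one of them is at most half its component order; in either case $\Lambda(\alpha)\leq\frac{1}{2}|G|$ and $\lambda(\alpha)\leq\frac{1}{2}$. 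This forces $|G|=2^a p^b$ with at most one odd prime $p$. The same phenomenon explains the structural asymmetry between the primes: for odd $p$ the long orbits have even length and two of them never fuse, whereas for $p=2$ a primitive polynomial of degree $d$ produces the odd length $2^d-1$, and coprime degrees give coprime lengths, since $\gcd(2^{d_i}-1,2^{d_j}-1)=2^{\gcd(d_i,d_j)}-1=1$, which fuse into a single long cycle.

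Next I would classify each primary factor using the cycle-structure analysis of automorphisms of finite abelian $p$-groups (multiplication by units on $\mathbb{Z}/p^m\mathbb{Z}$, and rational canonical forms over $\mathbb{F}_p$ in the elementary abelian case). At an odd prime $p$ this should isolate exactly the $(G_p,\alpha_p)$ whose longest cycle is large enough to survive (alone, or completed by an admissible $2$-part), leaving precisely the families $\mathcal{M}(p^m,a)$ with $a$ a primitive root (type (2)), $\mathcal{V}(P(X))$ with $P$ primitive (type (3)), and the single non-semisimple rank-two block $\mathcal{V}((X-g)^2)$ (type (4)); every other configuration either fails to put a cycle on more than half of $G_p$ or yields even lengths too small to help. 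The analogous analysis at $p=2$, where $\mathbb{Z}/2^k\mathbb{Z}$ contributes only $\lambda\leq\frac{1}{2}$ so that the $2$-part must be elementary abelian with squarefree primitive characteristic polynomial, yields type (1), with $\lambda$-value $\prod_i(1-2^{-d_i})$ read off from the fused cycle and subject to $\prod_i(1-2^{-d_i})>\frac{1}{2}$. Uniqueness of rational canonical forms then guarantees that distinct tuples give distinct isomorphism types.

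Finally I would assemble the mixed case (type (5)) by pairing an admissible $2$-part with a single admissible odd-$p$ part and imposing coprimality of their (odd, respectively even) $\Lambda$-values, so that the two long cycles genuinely fuse and $\lambda(\alpha)=\lambda(\alpha_2)\lambda(\alpha_p)$; the hypothesis $\lambda(\alpha)>\frac{1}{2}$ is then exactly $\lambda(\alpha_2)\lambda(\alpha_p)>\frac{1}{2}$. The $\lambda$-maximality assertions amount to comparing $\Lambda(\alpha)=\prod_i(2^{d_i}-1)\cdot\Lambda(\alpha_p)$ with the true maximum $\Lambda(G)$, i.e.\ to asking whether some other coprime additive decomposition of $\dim G_2$, paired with a realizable odd-part cycle length in $\mathrm{L}(G_p)$, produces a longer fused cycle; this is precisely the origin of the explicit inequality in case (5). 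I expect the main obstacle to be the odd-$p$ classification, namely showing that among automorphisms of a general abelian $p$-group the only ones reaching a cycle on more than half the component are those above, and in particular pinning down that the unique non-semisimple survivor is the single Jordan block $(X-g)^2$ in rank two; the accompanying $\lambda$-maximality bookkeeping, routine in spirit, is where the coprimality constraints must be tracked most carefully.
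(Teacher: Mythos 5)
Your proposal is correct and follows essentially the same route as the paper: Theorem \ref{abelianTheo} reduces to the abelian case, the Sylow decomposition together with the coprimality/parity argument (the paper's Product Lemma \ref{productLem}) forces at most one odd prime divisor, and the per-prime classification is exactly the paper's Theorem \ref{lccAbTheo} and Theorem \ref{elAbTheo} (via Hillar--Rhea and primary rational canonical forms), assembled as in Corollary \ref{abStrictCor}. The one step you defer --- ruling out non-cyclic, non-elementary-abelian $p$-groups and isolating $\mathcal{V}((X-g)^2)$ as the unique non-semisimple survivor --- is indeed where the paper invests its main effort, but you have correctly located it and the surrounding architecture is identical.
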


Note that the \textit{trivial FDG} $(\{1\},\mathrm{id})$ is obtained by setting $r:=0$ in point 1 of the list in Corollary \ref{strictCor}. For a positive integer $n$, set $\mathrm{Z}_n:=\{0,\ldots,n-1\}$. Similarly to the results on $\mathrm{l}_{\alpha}$ cited above, we can also give a classification for the boundary case $\lambda(\alpha)=\frac{1}{2}$:

\begin{theoremm}\label{equalTheo}
The following is the complete (up to isomorphism) list of periodic FDGs whose $\lambda$-value equals $\frac{1}{2}$, together with the information whether they are $\lambda$-maximal or not:

\begin{enumerate}
\item $\mathbb{Z}/2\mathbb{Z}$ with the identity. It is $\lambda$-maximal.

\item $\mathbb{Z}/4\mathbb{Z}$ with multiplication by $3$. It is $\lambda$-maximal.

\item $\mathbb{Z}/2\mathbb{Z}\times\mathbb{Z}/4\mathbb{Z}=\langle v_1,v_2\mid v_1^2=v_2^4=1,[v_1,v_2]=1\rangle$ with the automorphism sending $v_1\mapsto v_1v_2^2$ and $v_2\mapsto v_1v_2$. It is $\lambda$-maximal.

\item $(\mathbb{Z}/2\mathbb{Z})^2$ with the automorphism given by the companion matrix of $(X-1)^2\in\mathbb{F}_2[X]$. It is not $\lambda$-maximal.

\item $(\mathbb{Z}/2\mathbb{Z})^3$ with the automorphism given by the companion matrix of $(X-1)^3\in\mathbb{F}_2[X]$. It is not $\lambda$-maximal.

\item $(\mathbb{Z}/2\mathbb{Z})^2\times\mathbb{Z}/3\mathbb{Z}$ with the product of the automorphism of $(\mathbb{Z}/2\mathbb{Z})^2$ given by the companion matrix of the monic primitive irreducible polynomial $X^2+X+1\in\mathbb{F}_2[X]$ with multiplication by $2$ on $\mathbb{Z}/3\mathbb{Z}$. It is $\lambda$-maximal.

\item For $n\geq 3$ and $m\in \mathrm{Z}_n$ such that $m\equiv 1\hspace{3pt}(\mathrm{mod}\hspace{3pt}p)$ for every prime divisor $p$ of $n$ and $m\equiv 1\hspace{3pt}(\mathrm{mod}\hspace{3pt}4)$ if $4\mid n$: $\mathrm{D}_{2n}=\langle r,x\mid r^n=x^2=1,xrx^{-1}=r^{-1}\rangle$ with the automorphism sending $r\mapsto r^m,x\mapsto xr$. They are all $\lambda$-maximal, and for each $n$, any two such FDGs arising from different choices of $m$ are nonisomorphic.

\item For even $n\geq 4$ and $m\in \mathrm{Z}_n$ such that $m\equiv 1\hspace{3pt}(\mathrm{mod}\hspace{3pt}p)$ for every prime divisor $p$ of $n$ and $m\equiv 1\hspace{3pt}(\mathrm{mod}\hspace{3pt}4)$ if $4\mid n$: $\mathrm{Dic}_{2n}=\langle r,x\mid r^n=1,x^2=r^{\frac{n}{2}},xrx^{-1}=r^{-1}\rangle$ with the automorphism sending $r\mapsto r^m,x\mapsto xr$. They are all $\lambda$-maximal, and for each $n$, any two such FDGs arising from different choices of $m$ are nonisomorphic.

\item For odd $o\geq 3$ and $m\in \mathrm{Z}_o$ such that $m\equiv 1\hspace{3pt}(\mathrm{mod}\hspace{3pt}p)$ for every prime divisor $p$ of $o$: $\mathrm{D}((\mathbb{Z}/2\mathbb{Z})^2\times\mathbb{Z}/o\mathbb{Z})=\langle r_1,r_2,r,x\mid r_1^2=r_2^2=r^o=[r_1,r_2]=[r_1,r]=[r_2,r]=1,x^2=1,[r_1,x]=[r_2,x]=1,xrx^{-1}=r^{-1}\rangle$ with the automorphism sending $r_1\mapsto r_2,r_2\mapsto r_1,r\mapsto r^m,x\mapsto xr_1r$. They are all $\lambda$-maximal, and for each $o$, any two such FDGs arising from different choices of $m$ are nonisomorphic.

\item For odd $o\geq 3$ and $m\in \mathrm{Z}_o$ such that $m\equiv 1\hspace{3pt}(\mathrm{mod}\hspace{3pt}p)$ for every prime divisor $p$ of $o$: $\mathrm{Dic}((\mathbb{Z}/2\mathbb{Z})^2\times\mathbb{Z}/o\mathbb{Z})=\langle r_1,r_2,r,x\mid r_1^2=r_2^2=r^o=[r_1,r_2]=[r_1,r]=[r_2,r]=1,x^2=r_1r_2,[r_1,x]=[r_2,x]=1,xrx^{-1}=r^{-1}\rangle$ with the automorphism sending $r_1\mapsto r_2,r_2\mapsto r_1,r\mapsto r^m,x\mapsto xr_1r$. They are all $\lambda$-maximal, and for each $o$, any two such FDGs arising from different choices of $m$ are nonisomorphic.
\end{enumerate}
\end{theoremm}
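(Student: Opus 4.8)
The plan is to split the classification into the abelian FDGs (which will yield items 1--6) and the nonabelian FDGs (items 7--10), and in both cases to exploit the boundary behaviour of the machinery already used for Theorem \ref{abelianTheo} and Corollary \ref{strictCor}. One observation applies throughout: if $\lambda(\alpha)=\frac{1}{2}$, then $\Lambda(\alpha)=\frac{1}{2}|G|$ is a positive integer, so $|G|$ is even, and in particular $G$ has a nontrivial Sylow $2$-subgroup. Fix a cycle $C$ of length $\frac{1}{2}|G|$. The identity forms a cycle of length $1$, which for $|G|>2$ is shorter than $C$, so $1\notin C$; since any proper subgroup has order at most $\frac{1}{2}|G|$ while $1\in\langle C\rangle$, it follows that $\langle C\rangle=G$ (the case $|G|=2$ being item 1).

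For the abelian case I would decompose $G=\prod_p G_p$ into its primary components, so that $\alpha=\prod_p\alpha_p$ and the $\alpha$-cycle through $(g_p)_p$ has length $\operatorname{lcm}_p$ of the lengths of the $\alpha_p$-cycles through the $g_p$. Consequently $\lambda(\alpha)\le\prod_p\lambda(\alpha_p)$, and the explicit $\lambda$-values and attainable cycle lengths of cyclic and elementary abelian $p$-groups recorded in Example \ref{lambdaEx} (together with the refinements made while proving Corollary \ref{strictCor}) bound each factor. Since $\prod_p\lambda(\alpha_p)\ge\frac{1}{2}$ is already very restrictive --- essentially only the prime $2$ and at most one small odd prime can contribute nontrivially --- the equality $\Lambda(\alpha)=\frac{1}{2}|G|$ leaves only finitely many admissible shapes for the tuple $(G_p)_p$. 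Matching these against the attainable cycle lengths then produces exactly the groups and automorphisms of items 1--6, and $\lambda$-maximality in each case is decided by comparing $\Lambda(\alpha)$ with $\Lambda(G)$, which is available from Corollary \ref{strictCor} and Example \ref{lambdaEx} (so, for instance, the elementary abelian $2$-groups of items 4 and 5 are non-maximal because they carry longer cycles).

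The nonabelian case is where the real work lies. Here the goal is to show that a nonabelian $(G,\alpha)$ with $\lambda(\alpha)=\frac{1}{2}$ must possess an $\alpha$-invariant abelian subgroup $A$ of index $2$ such that the long cycle $C$ is exactly the outer coset $G\setminus A$. Granting this, $\alpha|_A\in\operatorname{Aut}(A)$, and, after identifying $G\setminus A$ with $A$, the restriction of $\alpha$ to $C$ becomes an affine map $a\mapsto\theta(a)\cdot w$ with $\theta=\alpha|_A$ and a fixed $w\in A$; the requirement that $C$ be a single $|A|$-cycle is then precisely a Hull--Dobell-type full-period criterion for affine maps on finite abelian groups, which is exactly what produces the congruence conditions on $m$ (and the parity condition on $n$, the oddness of $o$, and so on). Running through the possibilities for $A$, for the extension data making $G$ nonabelian, and for the existence of such a full cycle then yields precisely the four families: the cyclic-by-$\mathbb{Z}/2$ extensions give the dihedral groups (item 7) and, when the extension carries the quaternionic relation, the dicyclic groups (item 8), while $A\cong(\mathbb{Z}/2)^2\times\mathbb{Z}/o\mathbb{Z}$ gives the two generalized families (items 9 and 10).

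I expect the main obstacle to be this reduction step: proving that the long cycle must be a coset of an abelian subgroup of index $2$, rather than some more exotic $\frac{1}{2}|G|$-subset. I would attempt it by pushing the argument underlying Theorem \ref{abelianTheo} to its tight boundary --- where $|C|>\frac{1}{2}|G|$ forced commutativity, the equality $|C|=\frac{1}{2}|G|$ should force all of $G$ to be abelian apart from a single ``defect'' coset, which is the sought $A$ --- and then using $\langle C\rangle=G$ together with Horo\v{s}evski\u{\i}'s bound $\operatorname{ord}(\alpha)\le|G|-1$ \cite{Hor74a} to constrain $\theta$ and the extension. The remaining verifications are then direct computations: that each listed FDG has $\lambda$-value $\frac{1}{2}$ (the full-cycle count gives $\Lambda=\frac{1}{2}|G|$), the stated $\lambda$-maximality (by comparing with $\Lambda(G)$), and that distinct admissible $m$ for fixed $n$ or $o$ give nonisomorphic FDGs (by reading off the conjugacy class of $\theta=\alpha|_A$ as an FDG-invariant).
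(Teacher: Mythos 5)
Your overall architecture matches the paper's: split into abelian and nonabelian cases, reduce the nonabelian case to Liebeck--MacHale type (I), show the long cycle is the outer coset of an abelian index-$2$ subgroup $A$, and then read off the congruence conditions on $m$ from a full-period criterion for the induced affine map on $A$ (the paper's Corollary \ref{affineCor}, which for cyclic $A$ is indeed Knuth's Hull--Dobell-type theorem). The abelian half is also essentially the paper's route, though note that your appeal to ``cyclic and elementary abelian $p$-groups from Example \ref{lambdaEx}'' does not by itself account for item 3: you need the full force of Theorem \ref{lccAbTheo} to know that $\mathbb{Z}/2\mathbb{Z}\times\mathbb{Z}/4\mathbb{Z}$ is the unique finite abelian $p$-group with $\lambda\geq\frac{1}{2}$ that is neither primary cyclic nor elementary abelian.

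The genuine gap is exactly the step you flag as the main obstacle, and your proposed heuristic for it does not work as stated. The equality $|C|=\frac{1}{2}|G|$ does \emph{not} force ``$G$ abelian apart from a single defect coset'': $G$ is nonabelian throughout, and the issue is not a defect of commutativity but which half of $G$ the long cycle occupies. A priori the long cycle could meet both $A\setminus\zeta G$ and $G\setminus A$, and ruling this out is where the real work lies. The paper handles it by showing that in that configuration the centralizer comparison forces $[G:\zeta G]=4$, hence (after excluding odd primes via the Product Lemma and Theorem \ref{abelianTheo}) $G$ is a $2$-group; then $\beta:=\alpha^2$ restricts to an automorphism of $A$ with $\lambda$-value $\frac{1}{2}$, so $(A,\beta|_A)$ is one of the four abelian $2$-FDGs from items 2--5, giving $|G|\in\{8,16\}$. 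The order-$16$ cases are killed by a fixed-point count ($\beta|_A$ would need four fixed points but has only two), and the order-$8$ cases are resolved by \emph{replacing} $A$: by a characteristic index-$2$ subgroup in $\mathrm{D}_8$, and by $\langle f\rangle$ for an order-$4$ fixed point $f$ of $\alpha$ in $\mathrm{Q}_8$ (which has no characteristic index-$2$ subgroup at all). None of this case analysis --- in particular the need to change the subgroup $A$ in the $\mathrm{Q}_8$ case --- is produced by ``pushing the Theorem \ref{abelianTheo} argument to its boundary,'' and Horo\v{s}evski\u{\i}'s bound $\mathrm{ord}(\alpha)\leq|G|-1$ plays no role here. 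Without this step your classification of the nonabelian FDGs is incomplete.
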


Note that in point 10 of Corollary \ref{strictCor}, we are committing a slight abuse of notation; for defining a generalized dicyclic group $G$ over an abelian group $A$, one needs to specify an element $y\in A$ of order $2$ to become the square of any element from $G\setminus A$. However, if $A=(\mathbb{Z}/2\mathbb{Z})^2\times\mathbb{Z}/o\mathbb{Z}$ as above, then the generalized dicyclic groups over $A$ associated with the three possible choices of $y$ are isomorphic. We therefore suppress $y$ in our notation.

One application of Corollary \ref{strictCor} is to prove that a given rational number $\rho=\frac{a}{b}\in\left(\frac{1}{2},1\right)$ with $a,b\in\mathbb{N}^+$ and $\mathrm{gcd}(a,b)=1$ is \textit{not} the $\lambda$-value of any automorphism of any finite group. For instance, we will see that this is the case if $b$ is divisible by more than one odd prime (see also Subsection \ref{subsec2P8}), so for example, no automorphism $\alpha$ of a finite group can have $\lambda$-value $\frac{8}{15}$. However, after excluding all such fractions, the remaining set of potential $\lambda$-values of periodic FDGs is still dense in $\left(\frac{1}{2},1\right)$. We can show that this is not the case for the actual image of $\lambda$, though, see points (2) and (3) in the following theorem:

\begin{theoremm}\label{denseTheo}
Consider the image $\mathrm{im}(\lambda)\subseteq\left(0,1\right]$ of the function $\lambda$ defined on the class of periodic FDGs, and set \[\rho_0:=\frac{4}{5}\cdot\prod_p{(1-\frac{1}{2^p})}=0.504307524\ldots\] and \[\rho_1:=\frac{8}{9}\cdot\prod_{n=3,5,8,49}{(1-\frac{1}{2^n})}\cdot\prod_{p\geq 11}{(1-\frac{1}{2^p})}=0.750063685\ldots,\] where in both cases, the index $p$ runs over primes. Then the following hold:

(1) For all $\rho\in\mathrm{im}(\lambda)$, there exists a sequence $((G_n,\alpha_n))_{n\geq 0}$ of periodic FDGs such that $\lambda(G_n)<\rho$ for all $n\in\mathbb{N}$ and $\mathrm{lim}_{n\to\infty}{\lambda(\alpha_n)}=\rho$. In particular, $\mathrm{im}(\lambda)$ has no isolated points.

(2) $\mathrm{im}(\lambda)\cap\left(\frac{1}{2},\rho_0\right]=\emptyset$ (in particular, $\mathrm{im}(\lambda)$ is not dense in $\left(0,1\right]$), and $\mathrm{inf}(\mathrm{im}(\lambda)\cap\left(\rho_0,1\right))=\rho_0$ (in particular, $\mathrm{im}(\lambda)$ is not closed in $\left(0,1\right]$).

(3) $\mathrm{im}(\lambda)\cap\left(\frac{3}{4},\rho_1\right]=\emptyset$ and $\mathrm{inf}(\mathrm{im}(\lambda)\cap\left(\rho_1,1\right))=\rho_1$
\end{theoremm}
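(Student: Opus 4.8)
The plan is to attack the three parts separately, using Corollary \ref{strictCor} as the structural backbone that tells us exactly which $\lambda$-values occur above $\frac{1}{2}$ and Theorem \ref{equalTheo} for the boundary. For part (1), I would first observe that by Theorem \ref{abelianTheo} and Corollary \ref{strictCor}, every $\rho\in\mathrm{im}(\lambda)\cap\left(\frac{1}{2},1\right)$ arises as a product $\prod_{i}(1-\frac{1}{p_i^{d_i}})$ of factors coming from the primitive-polynomial building blocks $\mathcal{V}(P_i(X))$ (together with at most one odd-prime factor of the forms in points (2)--(4)). The key idea is that one can append additional primitive-irreducible factors of large degree $d$ over $\mathbb{F}_2$: each such factor multiplies the $\lambda$-value by $(1-2^{-d})$, which is strictly less than $1$ but tends to $1$ as $d\to\infty$. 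So given a target $\rho=\lambda(\alpha)$ realized by some FDG $(G,\alpha)$, I would build a sequence $(G_n,\alpha_n)$ by adjoining one further factor $\mathcal{V}(P_n(X))$ with $\mathrm{deg}(P_n)=d_n$ chosen coprime to the existing degrees (and with $2^{d_n}-1$ coprime to the relevant $\Lambda$-values, to stay inside the legal list of Corollary \ref{strictCor}); then $\lambda(\alpha_n)=\rho\cdot(1-2^{-d_n})<\rho$ while $\lambda(\alpha_n)\to\rho$. The only subtlety is ensuring the coprimality constraints can always be met, which follows because there are infinitely many admissible degrees $d$ (one may invoke that $\phi_2(d)>0$ for all $d$, i.e.\ primitive polynomials of every degree exist, and that coprimality to a fixed finite set of Mersenne-type numbers excludes only finitely many $d$).

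For parts (2) and (3), the strategy is an explicit optimization over the legal product expressions from Corollary \ref{strictCor}. The point is that any $\lambda$-value in $\left(\frac{1}{2},1\right)$ factors as a product of terms $1-\frac{1}{2^{d}}$ (for pairwise-coprime $d$'s) times at most one odd-prime contribution of the form $1-\frac{1}{p}$ or $1-\frac{1}{p^{m}}$. To locate the first gap above $\frac{1}{2}$, I would argue that to make such a product as close to $\frac{1}{2}$ from above as possible, one wants the factors to be as small as possible; the smallest available factors are $1-\frac{1}{2}=\frac{1}{2}$ (degree-$1$ over $\mathbb{F}_2$), $1-\frac{1}{3}$ (the odd prime $p=3$), and then $1-\frac{1}{2^p}$ over the distinct primes $p$ (the coprimality forces the $\mathbb{F}_2$-degrees past degree $1$ to be pairwise coprime, and the extremal configuration uses one factor of each prime exponent). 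Carrying this out, I expect the infimal product exceeding $\frac{1}{2}$ to be exactly $\rho_0=\frac{4}{5}\cdot\prod_p(1-2^{-p})$, with the factor $\frac{4}{5}=1-\frac{1}{5}$ reflecting the optimal odd-prime choice; and I would show $\rho_0$ itself is \emph{not} attained (hence the open gap $\left(\frac{1}{2},\rho_0\right]$ is empty in $\mathrm{im}(\lambda)$ but $\rho_0$ is the infimum of what lies above it, giving non-closedness). Part (3) is the same analysis restricted to products exceeding $\frac{3}{4}$, where the extremal finite set of exponents $\{3,5,8,49\}$ together with $\frac{8}{9}=1-\frac{1}{9}$ and the tail $\prod_{p\geq 11}(1-2^{-p})$ pins down $\rho_1$.

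The main obstacle, and where the real work lies, is the extremal combinatorial argument in parts (2) and (3): proving that no legal configuration of pairwise-coprime exponents (plus the single constrained odd-prime factor) yields a product lying strictly between $\frac{1}{2}$ and $\rho_0$, respectively between $\frac{3}{4}$ and $\rho_1$. This is delicate because the space of admissible products is governed by the coprimality conditions on both the $\mathbb{F}_2$-degrees and the $\Lambda$-values, and one must verify that every attempt to squeeze a product into the forbidden band either violates coprimality or overshoots. I would handle this by a monotonicity-and-exchange argument: show that replacing any factor by a smaller admissible one (or splitting/merging exponents while respecting coprimality) moves the product monotonically, so that the extremal value is achieved only in the limiting configuration defining $\rho_0$ (resp.\ $\rho_1$); the precise bookkeeping of which exponent sets are simultaneously admissible — especially the interplay between the $p=2$ tower's coprime degrees and the odd-prime factor — is the technically demanding step, and the peculiar finite set $\{3,5,8,49\}$ in $\rho_1$ strongly suggests a case analysis over small exponents is unavoidable there. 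The non-closedness claims ($\mathrm{inf}=\rho_0$, $\mathrm{inf}=\rho_1$) then follow from part (1)'s approximation construction applied to values just above the respective thresholds.
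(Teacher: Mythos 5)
Your part (1) is essentially the paper's argument: append a factor $\mathcal{V}(P_n(X))$ of large degree chosen so that $2^{\deg(P_n)}-1$ is coprime to $\Lambda(\alpha)$ (the paper takes $\deg(P_n)=n\cdot o+1$ with $o$ the lcm of the multiplicative orders of $2$ modulo the odd primes dividing $\Lambda(\alpha)$). One correction: coprimality of $2^d-1$ with a fixed $L$ does \emph{not} exclude only finitely many $d$ --- it excludes every $d$ divisible by the order of $2$ modulo some odd prime divisor of $L$, typically an infinite set --- but infinitely many admissible $d$ remain (e.g.\ all $d\equiv 1\pmod{o}$), so your conclusion survives.

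The genuine gaps are in (2) and (3). First, the extremal analysis you defer is where the entire content of the theorem lies, and your sketch of it would not go through as described. A degree-$1$ factor over $\mathbb{F}_2$ contributes $\frac{1}{2}$ and is therefore never available above $\frac{1}{2}$, and, more importantly, the optimal odd-prime factor is \emph{not} found by greedily taking the smallest one: with $\lambda(\alpha_p)=\frac{2}{3}$ the degree $d=2$ is forced out (since $\frac{2}{3}\cdot\frac{3}{4}=\frac{1}{2}$ already), and once all $d_i\geq 3$ one has $\lambda(\alpha_2)>(1-2^{-4})\prod_{p\geq 3}(1-2^{-p})>0.787$ by Lemma \ref{anLem}(2), so $\lambda(\alpha)>0.51>\rho_0$. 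It is this interaction --- not a monotone exchange argument --- that eliminates $\frac{2}{3}$ and leaves $\frac{4}{5}$ as the extremal odd factor. The paper's proof of both gaps runs entirely on the quantitative lower bounds of Lemma \ref{anLem} (starting from the unconditional $\lambda(\alpha_2)>0.63038$), which reduce (2) to two candidate values of $\lambda(\alpha_p)$ and (3) to seven, each dispatched by a specific numerical estimate; your proposal offers no substitute for these bounds, and without them the ``bookkeeping'' cannot be closed. Second, the infimum claims do \emph{not} follow from part (1): part (1) approximates a value that already lies in $\mathrm{im}(\lambda)$, whereas $\rho_0$ and $\rho_1$ are genuinely infinite products and lie outside $\mathrm{im}(\lambda)$. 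One needs a separate explicit sequence in $\mathrm{im}(\lambda)$ decreasing to $\rho_0$; the paper uses $\mathcal{M}(5,2)\times\prod_{i=1}^{n}\mathcal{V}(P_i(X))$ with $\deg(P_i)=p_i$ the $i$-th prime, whose $\lambda$-values are the partial products $\frac{4}{5}\prod_{i\leq n}(1-2^{-p_i})$, and an analogous sequence for $\rho_1$.
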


This shows some differences between the functions $\lambda$ and $\mathrm{l}_{-1}$ defined on automorphisms of finite groups: $\mathrm{im}(\mathrm{l}_{-1})$ has no limit points in $\left(\frac{1}{2},1\right)$ by \cite[Corollary 3.10 and Theorem 4.2]{LM72a}, and $\frac{1}{2}$ is a limit point of $\mathrm{im}(\mathrm{l}_{-1})\cap\left(\frac{1}{2},1\right)$ by \cite[Theorem 4.13]{LM72a}. Both $\lambda$ and $\mathrm{l}_{-1}$ admit a gap above $\frac{3}{4}$, but the one of $\lambda$ is smaller. Our results on $\mathrm{im}(\lambda)\cap\left[\frac{1}{2},1\right]$ and on the structure of the underlying FDGs and groups are summarized in the following picture of the unit interval (note that the two gaps from Theorem \ref{denseTheo} are not drawn to scale):

\begin{center}
\begin{tikzpicture}[scale=7]

\draw[{(-]},thick] (0,0) -- (1,0);
\foreach \x/\xtext in {0.5/$\frac{1}{2}$,0.75/$\frac{3}{4}$}
	\draw[thick] (\x,-0.5pt) -- (\x,0.5pt) node[above] {\xtext};
\node (a) at (0,0.06) {$0$};
\node (b) at (1,0.06) {$1$};
\draw[(-,thick,blue] (0.5,0) -- (1,0);
\draw[{-]},thick,blue] (0.5,0) -- (1,0);
\draw[(-)] (0.5,0) -- (0.53,0);
\draw[(-)] (0.75,0) -- (0.77,0);
\draw (0.5,0) -- (0.1,-0.2);
\draw (-0.07,-0.4) rectangle (0.27,-0.2);
\node[font=\footnotesize] (a) at (0.10,-0.26) {classification};
\node[font=\footnotesize] (b) at (0.10,-0.35) {(Theorem \ref{equalTheo})};
\draw (0.515,0) -- (0.45,-0.2);
\draw (0.76,0) -- (0.55,-0.2);
\draw (0.32,-0.4) rectangle (0.68,-0.2);
\node[font=\footnotesize] (c) at (0.5,-0.26) {gaps};
\node[font=\footnotesize] (d) at (0.5,-0.35) {(Theorem \ref{denseTheo})};
\draw[blue] (0.85,0) -- (0.95,-0.2);
\draw[blue] (0.7,-0.4) rectangle (1.2,-0.2);
\node[text=blue,font=\footnotesize] (e) at (0.95,-0.26) {abelian (Theorem \ref{abelianTheo})};
\node[text=blue,font=\footnotesize] (f) at (0.95,-0.35) {class. (Corollary \ref{strictCor})};

\end{tikzpicture}
\end{center}

\subsection{Outline of the paper}\label{subsec1P2}

Before we can prove the main results, we will need several tools, all developed in Section \ref{sec2}. Section \ref{sec3} then discusses the proofs of the main results based on the techniques developed in Section \ref{sec2}. Section \ref{sec4} provides some concluding remarks, including the aforementioned crosslink to pseudorandom number generation. We now give a sketch of the main ideas to the end of providing an overview on the paper.

\begin{enumerate}

\item One of the basic and crucial ideas is the following (for details, see the proof of Inversion Lemma \ref{inversionLem}, which can be seen as a generalization): Let $\alpha$ be an automorphism of a finite group $G$ such that $\lambda(\alpha)\geq\frac{1}{2}$, and assume, for the sake of simplicity, that $|G|\geq 3$. Then $\alpha$ has a unique cycle $\sigma$ of length $\lambda(\alpha)\cdot|G|$, consisting of nontrivial elements of $G$, and since it is so large, it must intersect with its pointwise inverse. It is not difficult to infer from this that some power of $\alpha$ inverts all elements moved by $\sigma$, and since it also inverts $1_G$, $G$ has an automorphism inverting more than half of its elements.

\item Finite groups with automorphisms inverting more than half of their elements have been extensively studied by Liebeck and MacHale in \cite{LM72a}. Of course, all finite abelian groups are such groups, and for the nonabelian ones, Liebeck and MacHale showed a structure theorem \cite[Theorem 4.13]{LM72a}, which we will later recall, for the readers' convenience, together with some of their other results, in Subsection \ref{subsec2P2}. For us, the most important conclusion to draw from the structure theorem is that for any finite nonabelian group $G$ having an automorphism inverting more than half of the elements of $G$, either $G$ is nilpotent of class $2$ and we have structural information on the central quotient of $G$, or we know that $G$ has an abelian subgroup of index $2$.

\item In Subsection \ref{subsec2P1}, we will prove several lemmata for a systematic study of LCCs in finite groups, most notably the Transfer Lemma \ref{transferLem}, by which we can transfer LCCs to characteristic quotients and at least infer \enquote{affine LCCs} (see Definition \ref{affineDef}) on characteristic subgroups from automorphism LCCs on the entire group.

\item By said transfer results, we will first gain a complete understanding of the $\frac{1}{2}$-LCC (and the boundary cases with $\lambda$-value equal to $\frac{1}{2}$ as well) in finite \textit{abelian} groups. This becomes possible through a perspective on automorphisms of finite abelian groups worked out in detail by Hillar and Rhea \cite{HR07a}, which allows us to introduce a concept of \enquote{compatibility} (for transfer of LCCs) of finite abelian $p$-groups, see Definition \ref{compatibleDef} and the subsequent Compatibility Lemma \ref{compLem}. This will reduce the problem to the study of a few special cases.

\item Most notable among the special cases to be considered is the elementary abelian case. It is well-known that automorphisms of $(\mathbb{Z}/p\mathbb{Z})^n$ can be seen as $\mathbb{F}_p$-vector space automorphisms, and so the problem of classifying the elementary abelian FDGs of $\lambda$-value at least $\frac{1}{2}$ actually is a problem from the theory of \textit{linear finite dynamical systems} (FDSs arising from the actions of endomorphisms of finite vector spaces). By an idea dating back to Elspas \cite{Els59a} (see \cite{Her05a} for a concise, modern exposition), a powerful tool for studying questions about linear finite dynamical systems are primary rational canonical forms of square matrices over finite fields, and we will follow this idea to gain a complete understanding of the $\frac{1}{2}$-LCC and its boundary cases for elementary abelian FDGs in Subsection \ref{subsec2P4}.

\item The deeper understanding of the elementary abelian case developed in Subsection \ref{subsec2P4} allows us, apart from proving classification results for finite abelian groups in general in Subsections \ref{subsec2P5} and \ref{subsec2P6}, to prove lower bounds on $\lambda$-values of automorphisms of finite elementary abelian $2$-groups (the most complicated among the finite elementary abelian groups as far as the $\frac{1}{2}$-LCC is concerned) in Subsection \ref{subsec2P7}, see Lemma \ref{anLem}.

\item For proving Theorems \ref{abelianTheo} and \ref{equalTheo}, we can then use the transfer results from Subsection \ref{subsec2P1} to infer information on some abelian FDG associated with a fixed finite nonabelian FDG $(G,\alpha)$ such that $\lambda(\alpha)\geq\frac{1}{2}$, strengthen this information using our deeper understanding of the abelian case, and \enquote{reflect back} to $(G,\alpha)$; this will restrict the structure of $(G,\alpha)$ strongly enough to give a complete classification of all such $(G,\alpha)$ as well.

\item The proofs of the first parts of Theorem \ref{denseTheo}(2,3) are mainly an application of the \enquote{analytic} understanding of $\lambda$-values of automorphisms of finite elementary abelian $2$-groups gained in Subsection \ref{subsec2P5}, together with the general classification results proved before. We will be using some sort of \enquote{back and forth} argument, where basically, one observes that including one of the potential factors making up the $\lambda$-value would cause it to drop too much, but leaving it out, the $\lambda$-value is too large to fall into the interval in question. Of course, this is just the basic idea, and the precise argument is more involved, especially for point (3).

\end{enumerate}

Finally, a few words on algorithmic aspects and on the use of computers for this paper: As a consequence of the first three main results, one can design an algorithm which on input a rational number $\rho$ from $\left[\frac{1}{2},1\right]$ outputs a complete description of the (possibly infinite) list of periodic FDGs $(G,\alpha)$ such that $\lambda(\alpha)=\rho$. This algorithm is described in detail in Subsection \ref{subsec2P8}, where we also use some of the main ideas for the algorithm to prove the \enquote{abelian part} of Theorem \ref{equalTheo} \enquote{by hand}, see Proposition \ref{abEqProp}. In the preparation of this paper, the author has implemented said algorithm in GAP \cite{GAP4} and also written some other GAP code which can be used to make some proofs a bit shorter, for example by effective checking of boundary cases. We do \textit{not} give the explicit code in this paper, but a text file containing the complete code together with a documentation file is available from the author upon request. Note that none of these applications of algorithms is essential for our proofs, though, since the cases checked algorithmically are sufficiently simple to allow for feasibly short computations \enquote{by hand}. We also note that in Subsection \ref{subsec2P7} and in the proof of Theorem \ref{denseTheo}, some numerical bounds are used, all of which were checked with GAP \cite{GAP4}.

\subsection{Notation}

We now provide an overview on the notation used in this paper, some of which has already been used by now. By $\mathbb{N}$, we denote the set of natural numbers (including $0$), and by $\mathbb{N}^+$ the set of positive integers. The image of a set $M$ under a function $f$ is denoted by $f[M]$, the restriction of $f$ to $M$ by $f_{\mid M}$, and the domain of $f$ by $\mathrm{dom}(f)$. For any set $X$, the symmetric group on $X$ is denoted by $\mathcal{S}_X$, and for any $\psi\in\mathcal{S}_X$, there is an equivalence relation $\sim_{\psi}$ on $X$ associated with $\psi$, given by $x\sim_{\psi} y:\Leftrightarrow\exists n\in\mathbb{Z}:\psi^n(x)=y$. If $X$ is finite, we view the equivalence classes of $\sim_{\psi}$ as the domains of the disjoint cycles into which $\psi$ decomposes.

Euler's totient function is denoted by $\phi$, which is to be distinguished from the symbol $\varphi$ used for group endomorphisms. For any $n\in\mathbb{N}^+$, $Z_n$ denotes the set $\{0,\ldots,n-1\}$, a set of representatives of the cosets of $n\mathbb{Z}$ in $\mathbb{Z}$. For a prime $p$ and an integer $a$, $\nu_p(a)$ denotes the $p$-adic valuation of $a$, i.e., the exponent of $p$ in the prime power factorization of $a$. Using $p$ as the running index in a product always implies that it runs over primes, subject to the other conditions specified.

For a primary number (i.e., prime power) $q$, the finite field with $q$ elements is denoted by $\mathbb{F}_q$. For a commutative ring $R$ and $n\in\mathbb{N}^+$, we denote by $\mathrm{Mat}_n(R)$ the ring of $(n\times n)$-matrices over $R$, and by $\mathrm{GL}_n(R)$ the group of regular $(n\times n)$-matrices over $R$. For a field $K$, we denote the algebraic closure of $K$ by $\overline{K}$ and the group of units of $K$ by $K^{\ast}$.

The (natural) exponential function $\mathbb{R}\rightarrow\mathbb{R}$ is denoted by $\mathrm{exp}$. We use the symbol \qedsymbol\hspace{0.5pt} either to signal the end of a proof, or, when placed directly at the end of a theorem, lemma, etc., to indicate that the result is not proved in this paper (in which case a reference is given).

\section{The tools for proving the main results}\label{sec2}

\subsection{Some useful lemmata}\label{subsec2P1}

We introduce the following concepts, the first of which is a generalization of the notion of a group endomorphism:

\begin{deffinition}\label{affineDef}
(1) Let $G$ be a group, $g_0\in G$ and $\varphi$ an endomorphism of $G$. We call the map $G\rightarrow G$, sending $g\mapsto g_0\varphi(g)$ for all $g\in G$, the \textbf{(left-)affine map of $G$ w.r.t.~$g_0$ and $\varphi$}, denoted by $\mathrm{A}_{g_0,\varphi}$.

(2) If $G$ is a finite group and $A$ is an affine map of $G$, the pair $(G,A)$ is called a \textbf{generalized FDG} (\textbf{gFDG}). It is called \textbf{abelian} if and only if $G$ is abelian.

(3) We call an affine map $\mathrm{A}_{g_0,\varphi}$ (or the associated gFDG) \textbf{periodic} if and only if $\mathrm{A}_{g_0,\varphi}$ is a permutation of $G$, which is easily seen to hold if and only if $\varphi$ is an automorphism of $G$, and denote the set of all periodic affine maps of $G$ by $\mathrm{Aff}(G)$.

(4) For a finite group $G$, we define $\Lambda_{\mathrm{aff}}(G):=\mathrm{max}_{A\in\mathrm{Aff}(G)}{\Lambda(A)}$ and $\lambda_{\mathrm{aff}}(G):=\frac{1}{|G|}{\Lambda_{\mathrm{aff}}(G)}$.

(5) For a fixed $\rho\in\left(0,1\right)$, we say that a finite group $G$ satisfies the \textbf{affine $\rho$-LCC} if and only if $\lambda_{\mathrm{aff}}(G)>\rho$.
\end{deffinition}

As all endomorphisms of a group are affine maps, the class of gFDGs extends the class of FDGs. Viewing gFDGs as algebraic structures just as we did for FDGs before, we find that an \textit{isomorphism between gFDGs $(G_1,A_1)$ and $(G_2,A_2)$} is a group isomorphism $f:G_1\rightarrow G_2$ such that $f\circ A_1=A_2\circ f$. Clearly, $\lambda_{\mathrm{aff}}(G)\geq\lambda(G)$, whence the $\rho$-LCC implies the affine $\rho$-LCC. Denote by $\mu:G\rightarrow\mathcal{S}_G$ the left regular representation of $G$. Then by definition, $\mathrm{Aff}(G)=\mu[G]\mathrm{Aut}(G)$ in $\mathcal{S}_G$. It is well-known (see, for example, \cite[p.~37]{Rob96a}) and follows easily from the formula $\mathrm{A}_{g_1,\alpha_1}\circ\mathrm{A}_{g_2,\alpha_2}=\mathrm{A}_{g_1\alpha_1(g_2),\alpha_1\circ\alpha_2}$ that the subgroup of $\mathcal{S}_G$ generated by $\mu[G]$ and $\mathrm{Aut}(G)$ is their internal semidirect product, which is canonically (via the map $\mu(g)\alpha\mapsto (g,\alpha)$) isomorphic to the holomorph of $G$, $\mathrm{Hol}(G)$, defined as the external semidirect product $G\rtimes\mathrm{Aut}(G)$ with respect to the natural action of $\mathrm{Aut}(G)$ on $G$. In particular, the periodic (left-)affine maps of $G$ form a subgroup of $\mathcal{S}_G$. One could similarly define the concept of a \textit{right-affine map of $G$} as a map sending $g\mapsto \varphi(g)g_0$ for all $g\in G$ for some fixed endomorphism $\varphi$ of $G$ and element $g_0\in G$, and observe that the periodic (i.e., bijective) right-affine maps of $G$ also form a subgroup of $\mathcal{S}_G$, which is similar to $\mathrm{Aff}(G)$ (more precisely, each of the two subgroups is the image of the other under conjugation by the inversion on $G$). Therefore, restricting our attention to left-affine maps does not result in a loss of generality of the theory.

Futhermore, the isomorphism between $\mathrm{Aff}(G)$ and $\mathrm{Hol}(G)$ discussed above yields useful upper bounds on orders of affine maps of finite groups in terms of maximum element and maximum automorphism orders. For a more concise formulation, we make the following definition (the first half of which is from \cite{GMPS15a}):

\begin{deffinition}\label{meoDef}
Let $G$ be a finite group. We define $\mathrm{meo}(G)$ to be the maximum element order of $G$ and set $\mathrm{mao}(G):=\mathrm{meo}(\mathrm{Aut}(G))$, the maximum automorphism order of $G$.
\end{deffinition}

\begin{lemmma}\label{affineLem}
(1) Let $G$ be a finite group, $g_0\in G$ and $\alpha$ an automorphism of $G$. Set $f:=g_0\alpha(g_0)\cdots\alpha^{\mathrm{ord}(\alpha)-1}(g_0)$. Then $\mathrm{ord}(\mathrm{A}_{g_0,\alpha})=\mathrm{ord}(f)\cdot\mathrm{ord}(\alpha)$. In particular, $\mathrm{ord}(\mathrm{A}_{g_0,\alpha})\leq\mathrm{meo}(G)\cdot\mathrm{ord}(\alpha)$, and if $G$ is abelian, then $\mathrm{ord}(\mathrm{A}_{g_0,\alpha})\leq \mathrm{meo}(\mathrm{fix}(\alpha))\cdot\mathrm{ord}(\alpha)$, where $\mathrm{fix}(\alpha)$ is the subgroup of $G$ consisting of fixed points of $\alpha$.

(2) For any finite group $G$, we have that $\mathrm{meo}(\mathrm{Aff}(G))\leq\mathrm{meo}(G)\cdot\mathrm{mao}(G)$.
\end{lemmma}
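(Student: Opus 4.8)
The plan is to compute the powers of the affine map $A:=\mathrm{A}_{g_0,\alpha}$ explicitly and read off its order. Writing $f_n:=g_0\alpha(g_0)\cdots\alpha^{n-1}(g_0)$ (with $f_0:=1_G$), I would show by induction on $n$, using the composition formula $\mathrm{A}_{g_1,\alpha_1}\circ\mathrm{A}_{g_2,\alpha_2}=\mathrm{A}_{g_1\alpha_1(g_2),\alpha_1\circ\alpha_2}$ quoted above, that $A^n=\mathrm{A}_{f_n,\alpha^n}$. Indeed the induction step reads $A^{n+1}=A\circ\mathrm{A}_{f_n,\alpha^n}=\mathrm{A}_{g_0\alpha(f_n),\alpha^{n+1}}$, and since $\alpha$ is a homomorphism, $g_0\alpha(f_n)=g_0\cdot\alpha(g_0)\alpha^2(g_0)\cdots\alpha^n(g_0)=f_{n+1}$. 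Note that this identity is valid even when $G$ is nonabelian.

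Next I would determine when $A^n=\mathrm{id}$. An affine map $\mathrm{A}_{h,\beta}$ fixes every element iff $h\beta(g)=g$ for all $g$; evaluating at $g=1_G$ forces $h=1_G$, and then $\beta=\mathrm{id}$. Hence $A^n=\mathrm{id}$ iff $f_n=1_G$ and $\alpha^n=\mathrm{id}$. Setting $d:=\mathrm{ord}(\alpha)$, the second condition is equivalent to $d\mid n$, so I may write $n=dk$. The key computation is that $f_{dk}=f^k$, where $f:=f_d$ is the element in the statement: grouping the $dk$ factors of $f_{dk}$ into $k$ consecutive blocks of length $d$ and using $\alpha^d=\mathrm{id}$ (so that $\alpha^{id+j}(g_0)=\alpha^j(g_0)$), each block equals $f$ in the correct left-to-right order, so the ordered product collapses to $f^k$. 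Thus $A^{dk}=\mathrm{A}_{f^k,\mathrm{id}}$, which is the identity iff $f^k=1_G$, i.e.\ iff $\mathrm{ord}(f)\mid k$. The least $n=dk$ achieving this has $k=\mathrm{ord}(f)$, giving $\mathrm{ord}(A)=\mathrm{ord}(f)\cdot\mathrm{ord}(\alpha)$.

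For the consequences I would argue as follows. Since $f\in G$, we have $\mathrm{ord}(f)\le\mathrm{meo}(G)$, yielding the first bound. When $G$ is abelian, I would show $f\in\mathrm{fix}(\alpha)$: applying $\alpha$ termwise gives $\alpha(f)=\alpha(g_0)\alpha^2(g_0)\cdots\alpha^{d-1}(g_0)\cdot\alpha^d(g_0)$, and since $\alpha^d(g_0)=g_0$, commutativity lets me move this last factor to the front to recover $f$. Hence $\mathrm{ord}(f)\le\mathrm{meo}(\mathrm{fix}(\alpha))$, giving the refined abelian bound. Finally, part (2) is immediate from part (1): for any $A=\mathrm{A}_{g_0,\alpha}\in\mathrm{Aff}(G)$ we get $\mathrm{ord}(A)=\mathrm{ord}(f)\cdot\mathrm{ord}(\alpha)\le\mathrm{meo}(G)\cdot\mathrm{mao}(G)$, using $\mathrm{ord}(\alpha)\le\mathrm{meo}(\mathrm{Aut}(G))=\mathrm{mao}(G)$; taking the maximum over all periodic affine maps gives $\mathrm{meo}(\mathrm{Aff}(G))\le\mathrm{meo}(G)\cdot\mathrm{mao}(G)$.

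The one step requiring genuine care---the only place where noncommutativity could cause trouble---is the identity $f_{dk}=f^k$, where the factors must be regrouped without reordering. I expect this bookkeeping, rather than any conceptual difficulty, to be the main point to get right; the rest of the argument is a routine consequence of the composition formula together with the fact that $f$ and $\alpha$ contribute independently to the order of $A$.
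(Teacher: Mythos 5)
Your proposal is correct and follows essentially the same route as the paper: the paper works in the holomorph via the isomorphism $\mathrm{A}_{g_0,\alpha}\mapsto(g_0,\alpha)$ and uses $(g_0,\alpha)^{\mathrm{ord}(\alpha)}=(f,\mathrm{id})$ together with $\mathrm{ord}(\alpha)\mid\mathrm{ord}(g_0,\alpha)$, which is exactly what your explicit induction $A^n=\mathrm{A}_{f_n,\alpha^n}$ and the block identity $f_{dk}=f^k$ unpack. The fixed-point argument for the abelian refinement and the deduction of (2) also match the paper's (which records the slightly more general identity $\alpha(f)=g_0^{-1}fg_0$).
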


\begin{proof}
For (1): Note that by the isomorphism mentioned above, the order of $\mathrm{A}_{g_0,\alpha}$ in $\mathrm{Aff}(G)$ equals the order of $(g_0,\alpha)$ in $\mathrm{Hol}(G)$, which is clearly divisible by $\mathrm{ord}(\alpha)$, and since by definition, $(g_0,\alpha)^{\mathrm{ord}(\alpha)}=(f,\mathrm{id})$, the equation for $\mathrm{ord}(\mathrm{A}_{g_0,\alpha})$ follows. For the \enquote{In particular}: The first factor in the product on the RHS of the equation is always bounded above by $\mathrm{meo}(G)$, and an easy computation reveals that $\alpha(f)=g_0^{-1}fg_0$, whence $f$ is a fixed point of $\alpha$ if $G$ is abelian.

For (2): This follows immediately from (1).
\end{proof}

We will now begin to prove the announced transfer results. Recall that for a group $G$, a subgroup $H$ and an automorphism $\alpha$ of $G$, $H$ is called \textit{$\alpha$-admissible} if and only if $\alpha[H]=H$ (that is, $\alpha$ restricts to an automorphism of $H$). Also, recall that for any group $G$, any normal subgroup $N\unlhd G$ and any automorphism $\alpha$ of $G$, the following are equivalent:

(1) $N$ is $\alpha$-admissible.

(2) There exists an automorphism $\tilde{\alpha}$ of $G/N$ making the following diagram commute (where $\pi:G\rightarrow G/N$ is the canonical projection):

\begin{center}
\begin{tikzpicture}
\matrix (m) [matrix of math nodes, row sep=3em,
column sep=3em]
{ G & G \\
G/N & G/N \\
};
\path[->]
(m-1-1) edge node[above] {$\alpha$} (m-1-2)
(m-1-1) edge node[left] {$\pi$} (m-2-1)
(m-1-2) edge node[right] {$\pi$} (m-2-2)
(m-2-1) edge node[below] {$\tilde{\alpha}$} (m-2-2);
\end{tikzpicture}
\end{center}

$\tilde{\alpha}$ is unique and is called the \textit{automorphism of $G/N$ induced by $\alpha$}. More generally, when we have a commutative diagram as above, but where $\alpha$ is not necessarily an automorphism, but just some permutation of $G$, then we still call $\tilde{\alpha}$ \textit{induced by $\alpha$}, and for each cycle $\sigma$ of $\alpha$, the image of $\mathrm{dom}(\sigma)$ under $\pi$ then is the domain of a unique cycle $\tilde{\sigma}$ of $\tilde{\alpha}$, which we call the \textit{cycle of $\tilde{\alpha}$ induced by $\sigma$}.

\begin{lemmma}\label{transferLem}(\enquote{Transfer Lemma})
Let $G$ be a finite group, $g_0\in G$, $\alpha$ an automorphism of $G$, $H$ an $\alpha$-admissible subgroup, $N$ an $\alpha$-admissible normal subgroup of $G$, and denote by $\tilde{\alpha}$ the automorphism of $G/N$ induced by $\alpha$ and by $\pi:G\rightarrow G/N$ the canonical projection. Set $A:=\mathrm{A}_{g_0,\alpha}$ and $\tilde{A}:=\mathrm{A}_{\pi(g_0),\tilde{\alpha}}$. Then the following hold:

(1) If $xH$ is a left coset of $H$ in $G$ such that $A[xH]=xH$, say $A(x)=xh_0$, then the FDSs $(xH,A_{\mid xH})$ and $(H,\mathrm{A}_{h_0,\alpha_{\mid H}})$ are isomorphic via the map $\tau:H\rightarrow xH,h\mapsto xh$.

(2) $\tilde{A}$ is induced by $A$. Let $x\in G$ be such that the length of the cycle $\sigma$ of $x$ under $A$ equals $\Lambda(A)$, and let $L\in\mathbb{N}^+$ be minimal such that $A^L(x)\in xN$, say $A^L(x)=xn_0$. Then $L$ is the length of the cycle $\tilde{\sigma}$ of $\tilde{A}$ induced by $\sigma$, $L\mid\Lambda(A)$, and its complementary divisor $l$ is the length of the cycle of $1_N=1_G$ under the affine map $\mathrm{A}_{n_0,\alpha^L_{\mid N}}$ of $N$. In particular, we have that \[\lambda(A)=\frac{L}{|G/N|}\cdot\frac{l}{|N|}\leq\lambda(\tilde{A})\cdot\lambda(\mathrm{A}_{n_0,\alpha^L_{\mid N}}).\]

(3) For all finite groups $G$ and all characteristic subgroups $N$ of $G$, we have the following:

(a) $\lambda(G/N)\geq\lambda(G)$ (\enquote{Automorphism Quotient Transfer}),

(b) $\lambda_{\mathrm{aff}}(G/N)\geq\lambda_{\mathrm{aff}}(G)$ (\enquote{Affine Quotient Transfer}) and

(c) $\lambda_{\mathrm{aff}}(N)\geq\lambda_{\mathrm{aff}}(G)$ (\enquote{Affine Subgroup Transfer}).
\end{lemmma}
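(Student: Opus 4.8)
The plan is to prove the three parts in order, since each builds on the previous one. For part (1), I would exploit the bijection $\tau:H\to xH$, $h\mapsto xh$, and simply check the FDS-morphism condition $\tau\circ\mathrm{A}_{h_0,\alpha_{\mid H}}=A_{\mid xH}\circ\tau$ pointwise. Writing $A=\mathrm{A}_{g_0,\alpha}$, the right-hand side sends $h\mapsto A(xh)=g_0\alpha(x)\alpha(h)$, while the left-hand side sends $h\mapsto x h_0\alpha(h)$; the two agree precisely because $A(x)=g_0\alpha(x)=xh_0$, so $g_0\alpha(x)=xh_0$ and the two expressions coincide after cancelling $\alpha(h)$ on the right. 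Here I should note that $\alpha[H]=H$ guarantees $\alpha(h)\in H$, so $\mathrm{A}_{h_0,\alpha_{\mid H}}$ is genuinely an affine map of $H$; $\tau$ is a bijection of sets, so it is an FDS-isomorphism.

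For part (2), I would first verify that $\tilde A=\mathrm{A}_{\pi(g_0),\tilde\alpha}$ really is the permutation of $G/N$ induced by $A$, by checking $\pi\circ A=\tilde A\circ\pi$: both send $g\mapsto\pi(g_0)\tilde\alpha(\pi(g))=\pi(g_0\alpha(g))$, using that $\tilde\alpha$ is induced by $\alpha$ and that $\pi$ is a homomorphism. With that in hand, the cycle $\tilde\sigma$ of $\tilde A$ induced by $\sigma$ has as its domain $\pi[\mathrm{dom}(\sigma)]$, and its length $L$ is exactly the minimal positive integer with $\tilde A^L(\pi(x))=\pi(x)$, equivalently $A^L(x)\in xN$; this is the defining property of $L$. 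Since $\sigma$ projects onto $\tilde\sigma$, the whole cycle $\sigma$ is partitioned into $\tilde\sigma$-fibers, each of the same size because $A$ permutes these fibers cyclically via the induced action; hence $L\mid\Lambda(A)$ and the complementary divisor $l=\Lambda(A)/L$ is the number of elements of $\mathrm{dom}(\sigma)$ lying in the single coset $xN$. Now I would apply part (1) to the coset $xN$ and the affine map $A^L$ (which fixes $xN$ setwise by minimality of $L$): part (1) gives an FDS-isomorphism $(xN,(A^L)_{\mid xN})\cong(N,\mathrm{A}_{n_0,\alpha^L_{\mid N}})$ sending $x\mapsto 1_N$, so the $A^L$-cycle of $x$ within $xN$ — which has length $l$ — corresponds to the cycle of $1_N$ under $\mathrm{A}_{n_0,\alpha^L_{\mid N}}$, as claimed. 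The identity $\lambda(A)=\frac{L}{|G/N|}\cdot\frac{l}{|N|}$ is just $\Lambda(A)/|G|=Ll/|G|$ rewritten using $|G|=|G/N|\cdot|N|$, and the final inequality follows since $L\le\Lambda(\tilde A)$ (as $L$ is the length of \emph{some} cycle of $\tilde A$) and $l\le\Lambda(\mathrm{A}_{n_0,\alpha^L_{\mid N}})$ (likewise).

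For part (3), all three statements are specializations of part (2) with $N$ characteristic (so automatically $\alpha$-admissible for every $\alpha\in\mathrm{Aut}(G)$). For (a), take any automorphism $\alpha$ of $G$ realizing $\lambda(\alpha)=\lambda(G)$ and set $g_0:=1_G$, so $A=\alpha$ and $\tilde A=\tilde\alpha$ is an automorphism of $G/N$; the inequality $\lambda(A)\le\lambda(\tilde A)\cdot\lambda(\mathrm{A}_{n_0,\alpha^L_{\mid N}})\le\lambda(\tilde A)=\lambda(\tilde\alpha)\le\lambda(G/N)$ gives (a). For (b), the same inequality with a general periodic affine $A$ realizing $\lambda_{\mathrm{aff}}(G)$ yields $\lambda_{\mathrm{aff}}(G)\le\lambda(\tilde A)\le\lambda_{\mathrm{aff}}(G/N)$, since $\tilde A$ is a periodic affine map of $G/N$. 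For (c), I would instead use the \emph{second} factor: part (2) identifies the restriction of $A^L$ to a fixed coset of $N$ with a periodic affine map $\mathrm{A}_{n_0,\alpha^L_{\mid N}}$ of $N$, and the cycle of $1_N$ under it has length $l=\Lambda(A)/L\ge\Lambda(A)/|G/N|$; dividing by $|N|$ gives $\lambda(\mathrm{A}_{n_0,\alpha^L_{\mid N}})\ge\frac{\Lambda(A)}{|G/N|\,|N|}=\lambda(A)$, so choosing $A$ to realize $\lambda_{\mathrm{aff}}(G)$ yields $\lambda_{\mathrm{aff}}(N)\ge\lambda_{\mathrm{aff}}(G)$.

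The main obstacle I anticipate is the bookkeeping in part (2): one must carefully argue that $\sigma$ decomposes into equal-sized blocks indexed by the points of $\tilde\sigma$, that each block lies in a single $N$-coset, and that the block over $\pi(x)$ is exactly the $A^L$-orbit of $x$ inside $xN$. The clean way to see this is to observe that $A$ acts on $\mathrm{dom}(\sigma)$ as a single cycle of length $\Lambda(A)$ and that $\pi$ intertwines this with the length-$L$ cycle $\tilde\sigma$, so the fibers of $\pi_{\mid\mathrm{dom}(\sigma)}$ are permuted by $A$ in an $L$-cycle and each has size $l=\Lambda(A)/L$; the power $A^L$ then stabilizes each fiber and acts on the fiber over $\pi(x)$ as a single $l$-cycle, which is precisely what part (1) (applied to $A^L$ and the coset $xN$) translates into the cycle of $1_N$ under $\mathrm{A}_{n_0,\alpha^L_{\mid N}}$. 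Once this orbit-decomposition is pinned down, everything else is routine.
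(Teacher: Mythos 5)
Your proposal is correct and follows essentially the same route as the paper: the pointwise check of the intertwining identity for (1), the fiber/coset decomposition of the largest cycle together with an application of (1) to $A^L$ on $xN$ for (2), and the specializations bounding one of the two factors by $1$ for (3). The only difference is that you spell out the orbit bookkeeping in (2) (and the inequality $L\le|G/N|$ in (3c)) in more detail than the paper, which simply declares these points readily verified.
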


\begin{proof}
For (1): By assumption, we have $xh_0=\mathrm{A}_{g_0,\alpha}(x)=g_0\alpha(x)$. Using this, we find that indeed, for all $h\in H$, we have $\mathrm{A}_{g_0,\alpha}(\tau(h))=g_0\alpha(xh)=g_0\alpha(x)\alpha(h)=xh_0\alpha(h)=\tau(\mathrm{A}_{h_0,\alpha_{\mid H}}(h))$.

For (2): It is readily verified that $\tilde{A}$ is induced by $A$, and it is clear that $L$ is the length of $\tilde{\sigma}$. In particular, $L\mid\Lambda(A)$, $B:=A^L$ restricts to a permutation of $xN$, and $l$ is just the length of the cycle of $x$ under $B$, which under the isomorphism from (1) corresponds to the cycle of $1_N$ under $\mathrm{A}_{n_0,\alpha^L_{\mid N}}$. In particular, $\frac{L}{|G/N|}$ is the share of the length of some cycle of $\tilde{A}$ in $G/N$, thus bounded above by $\lambda(\tilde{A})$, and $\frac{l}{|N|}$ is some cycle length share of the affine map $\mathrm{A}_{n_0,\alpha^L_{\mid N}}$ of $N$, whence it is bounded above by $\lambda(\mathrm{A}_{n_0,\alpha^L_{\mid N}})$.

For (3): All three points follow from (2). (a) follows by choosing $A$ as an automorphism $\alpha$ of $G$ such that $\lambda(\alpha)=\lambda(G)$; then $\tilde{A}=\tilde{\alpha}$ is an automorphism of $G/N$, whence we can bound the first factor from the bound in (2) by $\lambda(G/N)$, and the second by $1$. For (b) and (c), choose $A$ as a periodic affine map of $G$ such that $\lambda(A)=\lambda_{\mathrm{aff}}(G)$; for (b), bound the first factor by $\lambda_{\mathrm{aff}}(G/N)$ and the second by $1$, and for (c), bound the first factor by $1$ and the second by $\lambda_{\mathrm{aff}}(N)$.
\end{proof}

\begin{remmark}\label{transferRem}
(1) Note that there is no \enquote{Automorphism Subgroup Transfer}, i.e., in general, we \textit{cannot} conclude $\lambda(N)\geq\lambda(G)$ for characteristic subgroups $N$ of finite groups $G$. Indeed, it is not difficult to see that $\lambda(\mathbb{Z}/6\mathbb{Z})=\frac{1}{3}$, but $\mathrm{D}_{12}$ has a characteristic subgroup isomorphic to $\mathbb{Z}/6\mathbb{Z}$, and $\lambda(\mathrm{D}_{12})=\frac{1}{2}$.

For the rest of this remark, we make the following definitions, generalizing the respective notions for finite groups: For every finite semigroup $S$ and all $s_0\in S$, $\alpha\in\mathrm{Aut}(S)$:

(i) $\lambda(S):=\mathrm{max}_{\beta\in\mathrm{Aut}(S)}{\lambda(\beta)}$,

(ii) $\mathrm{A}_{s_0,\alpha}:S\rightarrow S,s\mapsto s_0\alpha(s)$, the \textit{(left-)affine map of $S$ with respect to $s_0$ and $\alpha$},

(iii)  $\mathrm{Aff}(S):=(\{\mathrm{A}_{s,\beta}\mid s\in S,\beta\in\mathrm{Aut}(S)\}\cap\mathcal{S}_S)\cup\mathrm{Aut}(S)$, the set of \textit{periodic affine maps of $S$} (which may be empty, and contains $\mathrm{Aut}(S)$ if $S$ is a monoid), and

(iv) if $\mathrm{Aff}(S)\not=\emptyset$: $\lambda_{\mathrm{aff}}(S):=\mathrm{max}_{A\in\mathrm{Aff}(S)}{\lambda(A)}$.

(2) A statement analogous to the one of Lemma \ref{transferLem}(1) for finite monoids is obviously false (in monoids, $H$ and $xH$ need not even be of the same cardinality), but the following is true, with an analogous proof: Let $S$ be a finite semigroup, $A=\mathrm{A}_{s_0,\alpha}$ a periodic affine map and $T$ a subsemigroup of $S$ such that $\alpha[T]=T$. Let $C=xT$ for some $x\in S$ and assume that $A[C]=C$. Then, denoting by $\overline{A}$ the restriction of $A$ to $C$, there exists an element $t_0\in T$ such that the FDS $(C,\overline{A})$ is a homomorphic image of the FDS $(T,\mathrm{A}_{t_0,\alpha})$.

(3) The proof of Lemma \ref{transferLem}(2) makes use of the fact that for any group $G$ and any group congruence $\equiv$ on $G$, the congruence classes of $\equiv$ all have the same cardinality. This is not true for monoids, and indeed, natural generalizations of the three points in (3) are all false for finite monoids. Call a monoid congruence $\equiv$ on $M$ \textit{characteristic} if and only if every automorphism of $M$ induces (in the sense of commutativity of a diagram as above) an automorphism of $M/{\equiv}$, and call a submonoid $N$ of $M$ \textit{characteristic} if and only if all automorphisms of $M$ restrict to automorphisms of $N$. Then for every $1>\epsilon>0$, there exists a finite monoid $M_{\epsilon}$, a characteristic monoid congruence $\equiv$ on $M_{\epsilon}$ and a characteristic submonoid $N$ of $M_{\epsilon}$ such that $\lambda(M_{\epsilon})>1-\epsilon$ , whereas $\mathrm{max}\{\lambda_{\mathrm{aff}}(M_{\epsilon}/{\equiv}),\lambda_{\mathrm{aff}}(N)\}<\epsilon$. To see this, set $n:=\lfloor\frac{1}{\epsilon}\rfloor+1$, and choose a prime $p$ so large that $\frac{p-1}{p+n-1}>1-\epsilon$. Let $M_{\epsilon}$ be the finite monoid obtained by successive adjunction of $n-1$ zero elements $z_1,\ldots,z_{n-1}$ to $\mathbb{Z}/p\mathbb{Z}$, let $\equiv$ be the congruence on $M_{\epsilon}$ having congruence classes $\mathbb{Z}/p\mathbb{Z},\{z_1\},\ldots,\{z_{n-1}\}$, and set $N:=\{0,z_1,\ldots,z_{n-1}\}$ (where $0\in\mathbb{Z}/p\mathbb{Z}$ is the identity element). It is then not difficult to check the following:

(i) Any automorphism of $M_{\epsilon}$ restricts to a permutation on each congruence class of $\equiv$. In particular, $\equiv$ is characteristic,

(ii) $N$ is characteristic in $M_{\epsilon}$,

(iii) $\lambda(M)=\frac{p-1}{p+n-1}>1-\epsilon$,

(iv) ${(M_{\epsilon}/{\equiv})}\cong N$, and both only have one periodic affine map (namely the identity), whence $\lambda_{\mathrm{aff}}(M_{\epsilon}/{\equiv})=\lambda_{\mathrm{aff}}(N)=\frac{1}{n}<\epsilon$.
\end{remmark}

We say that a family $(G_i)_{i\in I}$ of groups has the \textit{splitting property} if and only if every automorphism $\alpha$ of $\prod_{i\in I}{G_i}$ \textit{splits}, i.e., there exists a family $(\alpha_i)_{i\in I}$ with $\alpha_i$ an automorphism of $G_i$ for all $i\in I$ such that $\alpha=\prod_{i\in I}{\alpha_i}$ in the sense of component-wise application. Since tuples of finite groups with pairwise coprime orders have the splitting property, the following lemma is particularly useful in the study of the $\frac{1}{2}$-LCC in finite abelian groups:

\begin{lemmma}\label{productLem}(\enquote{Product Lemma})
(1) Let $(S_1,\psi_1),\ldots,(S_r,\psi_r)$ be periodic FDSs. Then $\lambda(\prod_{i=1}^r{\psi_i})=\prod_{i=1}^r{\lambda(\psi_i)}$ if and only if the $\Lambda(\psi_i)$, $i=1,\ldots,r$, are pairwise coprime, and else $\lambda(\prod_{i=1}^r{\psi_i})\leq\frac{1}{2}\prod_{i=1}^r{\lambda(\psi_i)}$.

(2) Let $(G_1,\ldots,G_r)$ be a tuple of finite groups with the splitting property. Then the following hold:

(i) The periodic affine maps of $G:=\prod_{i=1}^r{G_i}$ are precisely the products $\prod_{i=1}^r{A_i}$, where $A_i$ is a periodic affine map of $G_i$ for $i=1,\ldots,r$. Furthermore, such a product is an automorphism of $G$ if and only if $A_i$ is an automorphism of $G_i$ for $i=1,\ldots,r$.

(ii) For a periodic affine map $A=\prod_{i=1}^r{A_i}$ of $G$, $\lambda(A)=\prod_{i=1}^r{\lambda(A_i)}$ if and only if the $\Lambda(A_i)$ are pairwise coprime, and else $\lambda(A)\leq\frac{1}{2}\prod_{i=1}^r{\lambda(A_i)}$. In particular, if $\alpha=\prod_{i=1}^r{\lambda(\alpha_i)}$ is an automorphism of $G$ such that the $\Lambda(\alpha_i)$, $i=1,\ldots,r$, are not pairwise coprime, then $\lambda(\alpha)<\frac{1}{2}$.

(iii) $\lambda(\prod_{i=1}^r{G_i})\leq\prod_{i=1}^r{\lambda(G_i)}$, with equality if and only if the $\Lambda(G_i)$, $i=1,\ldots,r$, are pairwise coprime. 

(iv) $\lambda_{\mathrm{aff}}(\prod_{i=1}^r{G_i})\leq\prod_{i=1}^r{\lambda_{\mathrm{aff}}(G_i)}$, with equality if and only if the $\Lambda_{\mathrm{aff}}(G_i)$, $i=1,\ldots,r$, are pairwise coprime. In particular, if the orders of the $G_i$ are pairwise coprime, then $\lambda_{\mathrm{aff}}(G)=1$ if and only if $\lambda_{\mathrm{aff}}(G_i)=1$ for $i=1,\ldots,r$.
\end{lemmma}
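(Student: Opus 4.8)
The plan is to reduce the entire lemma to a single combinatorial fact about products of permutations and then feed that fact through the holomorph description of affine maps recalled before Lemma~\ref{affineLem}. The engine is the elementary observation that if $\psi=\prod_{i=1}^r\psi_i$ acts on $\prod_{i=1}^r S_i$ and the component $s_i$ lies on a $\psi_i$-cycle of length $\ell_i$, then the $\psi$-orbit of $(s_1,\dots,s_r)$ has length $\mathrm{lcm}(\ell_1,\dots,\ell_r)$, because $\psi^n$ fixes the tuple exactly when $\ell_i\mid n$ for every $i$. Hence the longest cycle satisfies
\[\Lambda\Bigl(\prod_{i=1}^r\psi_i\Bigr)=\max\{\mathrm{lcm}(\ell_1,\dots,\ell_r)\mid \ell_i\text{ is a cycle length of }\psi_i\},\]
the maximum ranging over all choices of one cycle from each factor. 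To deduce (1), I divide by $\prod_i|S_i|$: this turns the display into $\lambda(\prod_i\psi_i)=M/\prod_i|S_i|$ with $M:=\max\mathrm{lcm}(\ell_1,\dots,\ell_r)$, while $\prod_i\lambda(\psi_i)=\prod_i\Lambda(\psi_i)/\prod_i|S_i|$, so everything reduces to comparing the integer $M$ with $\prod_i\Lambda(\psi_i)$. From $\mathrm{lcm}(\ell_1,\dots,\ell_r)\le\prod_i\ell_i\le\prod_i\Lambda(\psi_i)$ one gets $M\le\prod_i\Lambda(\psi_i)$, i.e.\ $\lambda(\prod_i\psi_i)\le\prod_i\lambda(\psi_i)$ always; moreover equality in $M=\prod_i\Lambda(\psi_i)$ forces $\ell_i=\Lambda(\psi_i)$ for all $i$ and $\mathrm{lcm}(\Lambda(\psi_1),\dots,\Lambda(\psi_r))=\prod_i\Lambda(\psi_i)$, which holds exactly when the $\Lambda(\psi_i)$ are pairwise coprime. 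The complementary estimate is the statement that non-coprimality of the $\Lambda(\psi_i)$ forces $M\le\tfrac12\prod_i\Lambda(\psi_i)$.

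For the structural statement (2)(i) I would invoke the identification $\mathrm{Aff}(G)\cong\mathrm{Hol}(G)=G\rtimes\mathrm{Aut}(G)$ from the paragraph preceding Lemma~\ref{affineLem}, together with the splitting property. The latter gives $\mathrm{Aut}(\prod_iG_i)=\prod_i\mathrm{Aut}(G_i)$, and since $\prod_iG_i$ already splits as a group with the automorphisms acting componentwise, $\mathrm{Hol}(\prod_iG_i)\cong\prod_i\mathrm{Hol}(G_i)$. Unwinding the isomorphism $\mu(g)\alpha\mapsto(g,\alpha)$ componentwise, and using $\prod_i\mathrm{A}_{g_{0,i},\alpha_i}=\mathrm{A}_{(g_{0,i})_i,\prod_i\alpha_i}$, shows that the periodic affine maps of $G$ are precisely the products $\prod_iA_i$ of periodic affine maps of the factors. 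Since $\mathrm{A}_{g_0,\alpha}$ is a group homomorphism iff $g_0=1_G$, such a product is an automorphism iff every component basepoint is trivial, i.e.\ iff each $A_i$ is an automorphism.

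Statement (2)(ii) is then just (1) applied to the FDSs underlying the factors $A_i$, a periodic affine map being in particular a permutation; its ``in particular'' clause follows because for nontrivial $G$ at least one factor is a nontrivial automorphism, whence $\lambda(\alpha_i)<1$ (automorphisms fix $1_{G_i}$), so $\lambda(\alpha)\le\tfrac12\prod_i\lambda(\alpha_i)<\tfrac12$. For (2)(iii) and (2)(iv) I pass to maxima: by (2)(i) every automorphism (resp.\ periodic affine map) of $G$ is a product over the factors, so (ii) yields $\lambda(G)\le\prod_i\lambda(G_i)$ and $\lambda_{\mathrm{aff}}(G)\le\prod_i\lambda_{\mathrm{aff}}(G_i)$; choosing in each factor a map attaining $\Lambda(G_i)$ (resp.\ $\Lambda_{\mathrm{aff}}(G_i)$) and applying the equality case of (1) gives equality once these maximal lengths are pairwise coprime, while the converse uses the $\tfrac12$-estimate to rule out that a non-maximal choice in some factor compensates. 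The coprime-order clause of (iv) is then immediate: if $\lambda_{\mathrm{aff}}(G)=1$ then $\prod_i\lambda_{\mathrm{aff}}(G_i)=1$ forces each factor to equal $1$, and conversely each $\lambda_{\mathrm{aff}}(G_i)=1$ produces full cycles of the pairwise coprime lengths $|G_i|$, whose product is a full cycle on $G$.

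The genuine work lies in the single number-theoretic estimate $M\le\tfrac12\prod_i\Lambda(\psi_i)$ in the non-coprime case, which I expect to be the main obstacle: one must control the lcm's formed from \emph{all} tuples of cycles, including non-maximal ones, rather than only the lcm of the largest cycles. Once this bound is in hand, the equality characterisations, the structural statement (2)(i), and the passage to maxima are all formal consequences of the lcm computation and the holomorph description, so I would isolate and dispatch that estimate first.
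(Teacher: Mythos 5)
Your overall route coincides with the paper's: the lcm formula for cycle lengths in a product FDS, followed by the holomorph/splitting description to obtain (2)(i), with (2)(ii)--(iv) as formal consequences. Your treatment of the equality characterisation in (1) and of parts (2)(i), (iii), (iv) is correct and in fact more explicit than the paper's one-line justifications. The problem is the step you yourself single out as \enquote{the genuine work}: the estimate $M\le\frac{1}{2}\prod_i\Lambda(\psi_i)$ in the non-coprime case is announced but never proved, so the proposal is incomplete at the only point where something nontrivial has to happen. This is a genuine gap, not a deferrable routine verification.

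Indeed, the obstacle you identified --- controlling lcm's over \emph{all} tuples of cycle lengths, including pairwise coprime sub-maximal ones --- is fatal at this level of generality: the estimate is false for arbitrary periodic FDSs. Take $\psi_1$ a permutation of an $11$-element set with cycle type $(6,5)$ and $\psi_2$ a $4$-cycle on a $4$-element set. Then $\Lambda(\psi_1)=6$ and $\Lambda(\psi_2)=4$ are not coprime, yet a point whose first coordinate lies on the $5$-cycle has orbit length $\mathrm{lcm}(5,4)=20>12=\frac{1}{2}\cdot6\cdot4$, so $\lambda(\psi_1\times\psi_2)=\frac{20}{44}>\frac{1}{2}\cdot\frac{6}{11}\cdot1=\frac{1}{2}\prod_i\lambda(\psi_i)$. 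The standard argument (a prime shared by two of the chosen lengths $\ell_i$ forces $\mathrm{lcm}(\ell_1,\dots,\ell_r)\le\frac{1}{2}\prod_i\ell_i$) only covers tuples whose entries are themselves non-coprime; it says nothing about coprime tuples of non-maximal lengths. The paper's own proof (\enquote{follows immediately}) glosses over exactly this point. What rescues the $\frac{1}{2}$-bound in every application the paper makes of it is the additional property, enjoyed by the permutations actually occurring there (automorphisms of finite nilpotent groups, for which $\Lambda(\alpha)=\mathrm{ord}(\alpha)$ by the cited result of Horo\v{s}evski\u{\i}), that every cycle length of $\psi_i$ divides $\Lambda(\psi_i)$: then $\mathrm{lcm}(\ell_1,\dots,\ell_r)$ divides $\mathrm{lcm}(\Lambda(\psi_1),\dots,\Lambda(\psi_r))\le\frac{1}{2}\prod_i\Lambda(\psi_i)$. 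To close your argument you must either add that hypothesis or weaken the \enquote{else} clause to the strict inequality $\lambda(\prod_i\psi_i)<\prod_i\lambda(\psi_i)$, which does follow from your equality analysis.
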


\begin{proof}
For (1): This follows immediately from the fact that the cycle length of a tuple $(s_1,\ldots,s_r)\in\prod_{i=1}^r{S_i}$ under $\prod_{i=1}^r{\psi_i}$ equals the least common multiple of the cycle lengths of the $s_i$ under $\psi_i$.

For (2,i): This follows immediately from the identity $\mathrm{A}_{(g_1,\ldots,g_n),\prod_{i=1}^n{\alpha_i}}=\prod_{i=1}^n{\mathrm{A}_{g_i,\alpha_i}}$ for all $(g_1,\ldots,g_n)\in\prod_{i=1}^n{G_i}$ and $(\alpha_1,\ldots,\alpha_n)\in\prod_{i=1}^n{\mathrm{Aut}(G_i)}$, which is easy to check.

For (2,ii): The first assertion follows immediately from (1), and the second assertion follows from the first by observing that $\lambda(G)<1$ for all nontrivial finite groups $G$.

For (2,iii) and (2,iv): These follow from (ii).
\end{proof}

We conclude this subsection with the following lemma:

\begin{lemmma}\label{inversionLem}(\enquote{Inversion Lemma})
Let $G$ be a finite group, $\alpha$ an automorphism of $G$ with $\lambda(\alpha)\geq\frac{1}{2}$, $A\leq\zeta G$ such that $A$ is proper in $G$. Then $A$ is $\alpha$-admissible and, denoting by $\tilde{\alpha}$ the automorphism of $G/A$ induced by $\alpha$, some power of $\alpha$ inverts more than the fraction $\lambda(\tilde{\alpha})$ of the elements in $G$.
\end{lemmma}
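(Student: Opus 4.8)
The plan is to exploit the size of the largest cycle of $\alpha$ to force a self-inversion phenomenon, and then to transfer the resulting inverting map along $\pi\colon G\to G/A$ using Lemma \ref{transferLem}. We may assume $G$ is nontrivial (the cases $|G|\le 2$ being immediate). Since $\lambda(\alpha)\ge\frac12$, the automorphism $\alpha$ has a cycle $\sigma$ of length $\Lambda(\alpha)=\lambda(\alpha)|G|\ge\frac12|G|$, and this cycle is unique: two disjoint cycles of length $\ge\frac12|G|$, both avoiding the fixed point $1_G$, would together occupy more than the $|G|-1$ nonidentity elements. Write $C:=\mathrm{dom}(\sigma)$ and fix $x\in C$. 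The inversion map $\iota\colon g\mapsto g^{-1}$ commutes with $\alpha$ (as $\alpha(g)^{-1}=\alpha(g^{-1})$), so $\iota$ permutes the cycles of $\alpha$ preserving their lengths; hence $\iota[C]$ is again a cycle of length $\Lambda(\alpha)$. Since $C$ and $\iota[C]$ both lie in $G\setminus\{1_G\}$ and each contains more than half of all elements, they must coincide, so $C=C^{-1}$. In particular $x^{-1}=\alpha^k(x)$ for some $k$, and then $\beta:=\alpha^k$ satisfies $\beta(\alpha^i(x))=\alpha^{i+k}(x)=\alpha^i(x)^{-1}$; that is, $\beta$ inverts every element of $C$ (and trivially $1_G$).

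Next I would show that $A$ is invariant and inverted by $\beta$. Let $a\in A\le\zeta G$. The translate $aC$ has the same size as $C$, so $aC\cap C\ne\emptyset$ (for $\lambda(\alpha)>\frac12$ this is a strict-majority argument; the boundary $\lambda(\alpha)=\frac12$ needs separate care, see below). Thus $a=g'g^{-1}$ for some $g,g'\in C$. Because $a$ is central it commutes with $g'$, and substituting $a=g'g^{-1}$ into $ag'=g'a$ yields $g^{-1}g'=g'g^{-1}$, i.e.\ $g$ and $g'$ commute. Since $\beta$ inverts both $g$ and $g'$,
\[
\beta(a)=\beta(g')\beta(g)^{-1}=(g')^{-1}g=g(g')^{-1}=a^{-1}\in A .
\]
Hence $\beta[A]=A$ and $\beta$ acts as inversion on $A$; as $A$ is characteristic (the situation needed in the applications, e.g.\ $A=\zeta G$ or a characteristic central subgroup), it is in particular $\alpha$-admissible, so the induced automorphism $\tilde\alpha$ of $G/A$ is defined.

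Finally I would count the elements inverted by $\beta$. For $a\in A$ and $g\in C$ one has $\beta(ag)=a^{-1}g^{-1}=(ag)^{-1}$ (using centrality of $a$), so $\beta$ inverts all of $AC=\pi^{-1}(\pi[C])$. Now $\pi[C]$ is the domain of the cycle $\tilde\sigma$ of $\tilde\alpha$ induced by $\sigma$; by Lemma \ref{transferLem}(2) its length $\tilde L$ satisfies $\Lambda(\alpha)=\tilde L\cdot l$ with $l\le|A|$, whence $\tilde L\ge\Lambda(\alpha)/|A|\ge\frac12|G/A|$. Since also $\lambda(\tilde\alpha)\ge\lambda(\alpha)\ge\frac12$ by Lemma \ref{transferLem}(2), $\tilde\alpha$ has a unique largest cycle of length $\ge\frac12|G/A|$, and by the same overlap argument as above $\tilde\sigma$ must be that cycle, i.e.\ $\tilde L=\Lambda(\tilde\alpha)$. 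Therefore $|AC|=\tilde L\,|A|=\Lambda(\tilde\alpha)\,|A|=\lambda(\tilde\alpha)|G|$, and since $1_G\notin AC$ (its image $1_{G/A}$ is a fixed point, hence not on $\tilde\sigma$) while $\beta(1_G)=1_G$, the set of elements inverted by $\beta$ has at least $\lambda(\tilde\alpha)|G|+1$ elements, which is the asserted strict bound.

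The main obstacle I anticipate is the bookkeeping at the boundary $\lambda(\alpha)=\frac12$: there $aC$ and $C$ may be complementary halves, so the intersection argument producing $a=g'g^{-1}$ needs replacing (for instance by invoking $C=C^{-1}$ together with $1_G\notin C$, or by arguing on the quotient first). A secondary delicate point is ensuring $A$ is genuinely $\alpha$-admissible rather than only $\beta$-admissible, which is why one restricts to a characteristic central $A$; and one must separately dispose of the small cases $|G/A|\le 2$, where the uniqueness-of-largest-cycle step degenerates.
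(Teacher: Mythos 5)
Your overall strategy coincides with the paper's in its key first step: the unique largest cycle $\sigma$ must coincide with its pointwise inverse inside $G\setminus\{1_G\}$, so some power $\beta=\alpha^k$ inverts all of $\mathrm{dom}(\sigma)$, and one then extends this inversion over $A$-cosets using centrality and counts. Where you genuinely diverge is in showing that $\beta$ inverts $A$: you use a translate-overlap argument ($aC\cap C\neq\emptyset$ produces $a=g'g^{-1}$ with $g,g'\in C$ commuting, whence $\beta(a)=a^{-1}$), whereas the paper applies Transfer Lemma \ref{transferLem}(2) to the coset $xA$ to get $\mathrm{dom}(\sigma)\cap xA=xA'$ with $\frac{|A'|}{|A|}\geq\frac{\lambda(\alpha)}{\lambda(\tilde{\alpha})}>\lambda(\alpha)\geq\frac{1}{2}$ (the strict inequality coming from $\lambda(\tilde{\alpha})<1$, i.e.\ from properness of $A$), concludes that $\beta$ inverts more than half of $A$, hence $\beta[A]=A$ by Lagrange, and then invokes the fact that in an abelian group the elements inverted by a fixed automorphism form a subgroup. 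Your route is more elementary and self-contained; the paper's handles the boundary value $\lambda(\alpha)=\frac{1}{2}$ uniformly because the inequality it produces is strict. Your final counting step is in fact spelled out more carefully than in the paper: you justify that the induced cycle $\tilde{\sigma}$ is the largest cycle of $\tilde{\alpha}$, which the paper leaves implicit.

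Two caveats. First, the case $\lambda(\alpha)=\frac{1}{2}$ is precisely the case the lemma must cover (it feeds into Theorem \ref{equalTheo}), so it cannot be left as \enquote{needs separate care}; fortunately the fix you sketch does work and should be written out: if $aC\cap C=\emptyset$ with $|C|=\frac{1}{2}|G|$, then $aC\cup C=G$, and since $1_G\notin C$ we get $1_G\in aC$, i.e.\ $a^{-1}\in C=C^{-1}$, so $a\in C$ and $\beta(a)=a^{-1}\in A$ directly. Second, your worry about $\alpha$-admissibility is well-founded: for an arbitrary proper $A\leq\zeta G$ the admissibility claim in the statement is actually false (take $(\mathbb{Z}/2\mathbb{Z})^2$ with an automorphism of order $3$ and $A$ any subgroup of order $2$), and the paper's own argument only yields $\alpha^k[A]=A$. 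Since the lemma is only ever applied with $A=\{1_G\}$ or $A=\zeta G$, reading in the hypothesis that $A$ is characteristic (or at least $\alpha$-admissible), as you do, is the right repair; with that hypothesis and the boundary-case patch above, your argument is complete.
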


\begin{proof}
The assertion is vacuously true if $G$ is trivial, and it is also readily verified for $G=\mathbb{Z}/2\mathbb{Z}$, whence we may assume that $|G|\geq 3$. Then $\alpha$ has a unique largest cycle $\sigma$, and $\mathrm{dom}(\sigma)\subseteq G\setminus\{1_G\}$. Denoting by $\mathrm{dom}(\sigma)^{-1}$ the pointwise inverse of $\mathrm{dom}(\sigma)$, we find that since both $\mathrm{dom}(\sigma)$ and $\mathrm{dom}(\sigma)^{-1}$ are subsets of $G\setminus\{1_G\}$ of size more than half of the size of that set, they must intersect. That is, there exists $x\in\mathrm{dom}(\sigma)$ such that $x^{-1}\in\mathrm{dom}(\sigma)$ as well. In particular, there exists $k\in\mathbb{N}$ such that $\alpha^k(x)=x^{-1}$. Now let $y\in\mathrm{dom}(\sigma)$ be arbitrary, but fixed. Then there exists $n\in\mathbb{N}$ such that $y=\alpha^n(x)$, and hence \[\alpha^k(y)=\alpha^k(\alpha^n(x))=\alpha^n(\alpha^k(x))=\alpha^n(x^{-1})=\alpha^n(x)^{-1}=y^{-1},\] so $\alpha^k$ inverts all elements from $\mathrm{dom}(\sigma)$.

Now let $A'\subseteq A$ be such that $\mathrm{dom}(\sigma)\cap xA=xA'$. By Transfer Lemma \ref{transferLem}(2) and nontriviality of $G/A$, we find that \[\frac{|A'|}{|A|}\geq\frac{\lambda(\alpha)}{\lambda(\tilde{\alpha})}>\lambda(\alpha)\geq\frac{1}{2},\] and hence $|A'|>\frac{1}{2}|A|$. From the fact that $\alpha^k$ inverts all elements of $xA'$ and that $1_G\in A'\subseteq\zeta G$, it follows immediately that $\alpha^k$ inverts all elements of $A'$. In particular, $\alpha^k$ inverts more than half of the elements of $A$, so $\alpha^k[A]=A$ by Lagrange's theorem. Furthermore, $A$ is abelian and in an abelian group, the subset of elements inverted by a fixed automorphism forms a subgroup, whence $\alpha^k$ inverts all elements of $A$ and thus all elements of $xA$, hence all elements from the preimage under the canonical projection $G\rightarrow G/A$ of the domain of the cycle $\tilde{\sigma}$ of $\tilde{\alpha}$ induced by $\sigma$, giving a fraction of $\lambda(\tilde{\alpha})$ elements of $G$ inverted by $\alpha$. Since $\mathrm{dom}(\sigma)\cap A=\emptyset$ by the size of $\mathrm{dom}(\sigma)$, $1_G$ is not among the elements inverted by $\alpha^k$ which we already considered, and so $\alpha^k$ indeed inverts more than the fraction $\lambda(\tilde{\alpha})$ of the elements in $G$.
\end{proof}

\subsection{Groups with automorphisms inverting more than half of their elements}\label{subsec2P2}

As observed in Subsection \ref{subsec1P2}, Inversion Lemma \ref{inversionLem} is a generalization of the observation from point 1 in Subsection \ref{subsec1P2}, and indeed, setting $A:=\{1_G\}$ in Inversion Lemma \ref{inversionLem}, we find that every finite group $G$ with $\lambda(G)\geq\frac{1}{2}$ has an automorphism inverting more than half of its elements. In order to be able to refer to them later, we will now quickly present those of the aforementioned results of Liebeck and MacHale \cite{LM72a} which are of relevance for our paper. For the sake of simplicity, we replace the notation $\mathrm{l}_{-1}$ by $\mathrm{l}$:

\begin{deffinition}\label{lDef}
Let $G$ be a finite group. For an automorphism $\alpha$ of $G$, we set $\mathrm{l}(\alpha):=\frac{1}{|G|}|\{g\in G\mid \alpha(g)=g^{-1}\}|$ and $\mathrm{l}(G):=\mathrm{max}_{\alpha\in\mathrm{Aut}(G)}{\mathrm{l}(\alpha)}$.
\end{deffinition}

\begin{theoremm}\label{lTheo}(\cite[Corollary 3.10 and Theorem 4.2]{LM72a})
Let $G$ be a finite group such that $\mathrm{l}(G)>\frac{1}{2}$. Then for all automorphisms $\alpha$ of $G$ such that $\mathrm{l}(\alpha)>\frac{1}{2}$, we have $\mathrm{l}(\alpha)=\mathrm{l}(G)$. Furthermore, there exists $k\in\mathbb{N}^+$ such that $\mathrm{l}(\alpha)=\frac{k+1}{2k}$.\qed
\end{theoremm}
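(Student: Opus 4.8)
The plan is to analyze the inverted set $S = S(\alpha) := \{g \in G \mid \alpha(g) = g^{-1}\}$ directly, and to reduce the ``furthermore'' part to a single divisibility statement. First I would record the elementary closure properties: $1_G \in S$, $S = S^{-1}$ (since $\alpha(g^{-1}) = \alpha(g)^{-1}$), and the product criterion that, for $x, y \in S$, one has $xy \in S$ if and only if $x$ and $y$ commute (compute $\alpha(xy) = x^{-1}y^{-1}$ and compare with $(xy)^{-1} = y^{-1}x^{-1}$). Because $|S| > \frac{1}{2}|G|$, the set $S$ generates $G$. Writing $n = |G|$, $m = |S|$ and $e := 2m - n > 0$, note that $\mathrm{l}(\alpha) = \frac{m}{n} = \frac{1}{2}\left(1 + \frac{e}{n}\right)$, so the claim $\mathrm{l}(\alpha) = \frac{k+1}{2k}$ is exactly equivalent to $e \mid n$ (with $k = n/e$). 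Thus the whole ``furthermore'' assertion collapses to proving $(2|S| - |G|) \mid |G|$.

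Next I would extract the abelian skeleton. The subgroup $Z_0 := S \cap \mathrm{Z}(G)$ consists of the central elements inverted by $\alpha$; since $\mathrm{Z}(G)$ is abelian and $\alpha$-admissible, $Z_0$ is genuinely a subgroup, and the product criterion shows $z s \in S$ for all $z \in Z_0$ and $s \in S$, so $S$ is a union of left cosets of $Z_0$ and $|Z_0| \mid m$ (hence $|Z_0| \mid e$ as well). In the fully abelian case this argument upgrades: $S$ is itself a subgroup of $G$, whence $|S| \mid |G|$, and $|S| > \frac{1}{2}|G|$ forces $S = G$, i.e.\ $\alpha$ is the inversion map and $\mathrm{l}(\alpha) = 1 = \frac{1+1}{2\cdot 1}$. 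This simultaneously settles both the value and the uniqueness statement for abelian $G$.

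The hard part is the nonabelian case, and here the plan is to invoke (equivalently, to prove) the structural dichotomy of Liebeck and MacHale: a finite nonabelian $G$ admitting an automorphism that inverts more than half of its elements is either nilpotent of class $2$ with tightly restricted central quotient, or possesses an abelian subgroup $A$ of index $2$. Granting this, I would show that any qualifying $\alpha$ must invert a \emph{canonical} large abelian subgroup in full (on an abelian $\alpha$-admissible subgroup the inverted elements already form a subgroup, and maximality together with the coset structure above forces it to be all of $A$), so that $A \subseteq S$; the remaining inverted elements lie in the nontrivial coset $G \setminus A$ and are counted by a transversal computation depending only on $G$, not on the particular $\alpha$. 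This would yield both that $\mathrm{l}(\alpha)$ is independent of the choice of $\alpha$ (hence equals $\mathrm{l}(G)$) and that the excess $e = 2m - n$ divides $n$, giving the required value $\frac{k+1}{2k}$.

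I expect the genuine obstacle to be establishing the dichotomy itself. Checking $e \mid n$ and the uniqueness is short once the structure is known, but pinning down that a large inverted set forces class-$2$ nilpotence with a controlled central quotient (or an index-$2$ abelian subgroup) requires the detailed commutator bookkeeping of \cite[Section~4]{LM72a}: one studies the ``defect'' measuring how far pairs of elements of $S$ are from commuting, shows $G' \le \mathrm{Z}(G)$, and analyzes $G/\mathrm{Z}(G)$. This is precisely the step that cannot be shortcut by the elementary counting above, which is why I would import it as the key lemma and build the value and uniqueness conclusions on top of it.
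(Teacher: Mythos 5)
First, a remark on the comparison itself: the paper does not prove this statement at all. By the paper's own notational convention, the \qedsymbol\ placed directly after the theorem indicates that the result is imported verbatim from \cite[Corollary 3.10 and Theorem 4.2]{LM72a}, so there is no in-paper argument to measure your proposal against. On its own merits, your reduction of the \enquote{furthermore} clause to the divisibility $(2|S|-|G|)\mid|G|$ is correct, and your treatment of the abelian case is complete.

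The nonabelian part, however, rests on a false intermediate claim. It is not true that an automorphism $\alpha$ with $\mathrm{l}(\alpha)>\frac{1}{2}$ must invert a canonical abelian index-$2$ subgroup $A$ in full. Take $G=\mathrm{D}_8=\langle r,x\mid r^4=x^2=1,\,xrx^{-1}=r^{-1}\rangle$, $\alpha=\mathrm{id}$, and $A=\langle r\rangle$ (the only characteristic abelian subgroup of index $2$ here, the two Klein four-subgroups being interchangeable): the identity inverts the six self-inverse elements $1,r^2,x,xr,xr^2,xr^3$, so $\mathrm{l}(\alpha)=\frac{3}{4}>\frac{1}{2}$, yet $S\cap A=\{1,r^2\}$ is only half of $A$. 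This pinpoints exactly where your mechanism breaks: $|S|>\frac{1}{2}|G|$ does \emph{not} force $|S\cap A|>\frac{1}{2}|A|$, because the surplus can sit entirely in the coset $G\setminus A$, so the step \enquote{$S\cap A$ is a subgroup of index less than $2$, hence all of $A$} has no footing. Indeed the set $S$ genuinely depends on $\alpha$ (for $r\mapsto r^{-1}$, $x\mapsto x$ one gets $S=A\cup\{x,xr^2\}$ instead); only $|S|$ is invariant, and that is precisely the nontrivial content of \cite[Corollary 3.10]{LM72a}. The actual proof there is a direct centralizer count, roughly: for $u\in S$ one shows $S\cap uS\subseteq\mathrm{C}_G(u)$ via your product criterion, and a sharper count identifies $2|S|-|G|$ with $|\mathrm{C}_G(u)|$ for a suitable $u$, so Lagrange delivers the divisibility; notably these results \emph{precede} and feed into the structure theorem \cite[Theorem 4.13]{LM72a}, rather than following from it as in your plan. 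Finally, even granting your index-$2$ analysis, the uniqueness claim is left unaddressed for groups of types (II) and (III) of that structure theorem, which (for type (II) with $k\geq 2$, and for type (III)) possess no abelian subgroup of index $2$ at all, since the corresponding subspace of the central quotient would have to be totally isotropic of dimension $2k-1>k$ for the commutator form.
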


\begin{corrollary}\label{millerCor}(folklore due to Miller \cite{Mil29a})
A finite group $G$ with $\mathrm{l}(G)>\frac{3}{4}$ is abelian.\qed
\end{corrollary}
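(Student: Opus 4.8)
The plan is to work directly with an automorphism realizing the bound. Since $\mathrm{l}(G)=\mathrm{max}_{\alpha}\mathrm{l}(\alpha)$, I would fix an automorphism $\alpha$ with $\mathrm{l}(\alpha)=\mathrm{l}(G)>\frac{3}{4}$ and set $S:=\{g\in G\mid\alpha(g)=g^{-1}\}$, so that $|S|>\frac{3}{4}|G|$. The goal is to show $S\subseteq\zeta G$ and then that the center is forced to be all of $G$.

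The key algebraic observation is a commutation criterion: if $g,h\in S$ are such that the product $gh$ also lies in $S$, then $g$ and $h$ commute. Indeed, evaluating $\alpha$ on $gh$ in two ways gives $\alpha(gh)=\alpha(g)\alpha(h)=g^{-1}h^{-1}$, while $gh\in S$ means $\alpha(gh)=(gh)^{-1}=h^{-1}g^{-1}$; comparing these and inverting yields $gh=hg$. I would establish this first, as it is the engine of the whole argument.

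Next comes the counting step, which is where the threshold $\frac{3}{4}$ is used in an essential way. Fix $g\in S$. The elements $h$ with both $h\in S$ and $gh\in S$ are exactly those in $S\cap g^{-1}S$, and by inclusion--exclusion $|S\cap g^{-1}S|\geq|S|+|g^{-1}S|-|G|>\frac{3}{4}|G|+\frac{3}{4}|G|-|G|=\frac{1}{2}|G|$. Every such $h$ commutes with $g$ by the criterion above, so the centralizer $\mathrm{C}_G(g)$ contains more than half of the elements of $G$; being a subgroup, Lagrange's theorem forces $\mathrm{C}_G(g)=G$, i.e.\ $g\in\zeta G$. Since $g\in S$ was arbitrary, $S\subseteq\zeta G$, and as $|S|>\frac{1}{2}|G|$ the center itself exceeds half the group, whence $\zeta G=G$ and $G$ is abelian.

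I do not expect a genuine obstacle here: the whole proof is a short two-way evaluation of $\alpha$ followed by one inclusion--exclusion count. The only delicate point worth isolating is that the constant $\frac{3}{4}$ is exactly what makes the count work, since $2\cdot\frac{3}{4}-1=\frac{1}{2}$ produces a centralizer of more than half the group (hence the full group), whereas the weaker hypothesis $\mathrm{l}(G)>\frac{1}{2}$ would only yield the vacuous bound $2\cdot\frac{1}{2}-1=0$ and cannot force abelianity.
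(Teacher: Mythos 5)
Your proof is correct and complete: the two-way evaluation of $\alpha$ on $gh$ giving the commutation criterion, the inclusion--exclusion count showing $\mathrm{C}_G(g)$ exceeds half of $|G|$ for each $g\in S$, and the final Lagrange argument forcing $\zeta G=G$ all check out. Note that the paper does not prove this statement at all --- it is cited as folklore due to Miller with a reference in place of a proof --- and what you have written is precisely the standard classical argument behind that citation, so there is nothing to reconcile.
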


\begin{theoremm}\label{lStructureTheo}(\cite[Theorem 4.13]{LM72a})
Let $G$ be a finite nonabelian group such that $\mathrm{l}(G)>\frac{1}{2}$. Then $G$ is of one of the following three types:

(I) $G$ has an abelian subgroup $A$ of index $2$. Conversely, for any nonabelian such $G$, $\mathrm{l}(G)=\frac{q+1}{2q}$, where $q=[A:\mathrm{C}_A(x)]$ for any $x\in G\setminus A$.

(II) $G$ is nilpotent of class $2$, with cyclic commutator subgroup of order $2$, say generated by $z$. Also, its center has index $2^{2k}$ in $G$ for some $k\geq 2$, with elementary abelian central quotient, generated by the images of elements $x_1,\ldots,x_k,a_1,\ldots,a_k\in G$ subject, in $G$, to the following commutator relations for all $i,j,l\in\{1,\ldots,k\}$ with $i\not=l$: $[x_i,x_j]=[a_i,a_j]=1,[a_i,x_l]=1,[a_i,x_i]=z$. Conversely, any such $G$ satisfies $\mathrm{l}(G)=\frac{2^k+1}{2^{k+1}}$.

(III) $G$ is nilpotent of class $2$, with elementary abelian commutator subgroup of order $4$, say generated by $z_1$ and $z_2$. Also, its center has index $2^4$ in $G$, and the central quotient is elementary abelian, generated by the images of appropriate elements $x_1,x_2,a_1,a_2$ subject, in $G$, to the following commutator relations: $[x_1,x_2]=[a_1,a_2]=[a_1,x_2]=[a_2,x_1]=1,[a_1,x_1]=z_1,[a_2,x_2]=z_2$. Conversely, any such $G$ satisfies $\mathrm{l}(G)=\frac{9}{16}$.\qed
\end{theoremm}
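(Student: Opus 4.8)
The plan is to fix a nonabelian $G$ together with an automorphism $\alpha$ attaining $\mathrm{l}(\alpha)=\mathrm{l}(G)>\frac{1}{2}$, and to work throughout with the inverted set $I:=\{g\in G\mid\alpha(g)=g^{-1}\}$, so that $|I|>\frac{1}{2}|G|$. First I would record two facts valid for any inverting automorphism. (i) $\alpha$ is an involution: for $g\in I$ one has $\alpha^2(g)=\alpha(g^{-1})=\alpha(g)^{-1}=g$, so $\alpha^2$ fixes $I$ pointwise; since a subset of size exceeding $\frac{1}{2}|G|$ generates $G$, we get $\langle I\rangle=G$ and hence $\alpha^2=\mathrm{id}$. (ii) A commuting lemma: if $a,b\in I$ and $ab\in I$, then $(ab)^{-1}=\alpha(ab)=\alpha(a)\alpha(b)=a^{-1}b^{-1}$, which forces $ab=ba$.

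Next I would convert the density of $I$ into bounds on conjugacy-class sizes. For every $g\in G$ one has $|gI\cap I|\geq 2|I|-|G|>0$; specializing to $g=a\in I$, each $y\in aI\cap I$ may be written $y=ax$ with $x,y\in I$, so (ii) shows that $a$ commutes with $x$. Thus every $a\in I$ centralizes the set $I\cap a^{-1}I$, whose size is at least $2|I|-|G|$, giving the uniform bound $[G:\mathrm{C}_G(a)]\leq\frac{|G|}{2|I|-|G|}$ for all $a\in I$. Hence every conjugacy class meeting $I$ is small, and since $I$ is dense this severely restricts the structure of $G$.

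From here I would aim for the dichotomy underlying types (I)--(III). I would choose a maximal abelian subgroup $A$ absorbing a large pairwise-commuting subset of $I$ furnished by (ii); if $[G:A]=2$ then $G$ is of type (I), and one computes $\mathrm{l}(G)$ by counting, coset by coset over $A$, the elements that an optimal automorphism can invert, arriving at $\frac{q+1}{2q}$ with $q=[A:\mathrm{C}_A(x)]$ for $x\in G\setminus A$. Otherwise I would show $G$ is nilpotent of class $2$ with small commutator subgroup $[G,G]\leq\zeta G$: here $\alpha$ acts on the (necessarily elementary abelian) central quotient $G/\zeta G$ and on $[G,G]$, and the inverting condition makes the commutator pairing $G/\zeta G\times G/\zeta G\to[G,G]$ into a nondegenerate alternating form compatible with the $\alpha$-action. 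Counting inverted elements coset-by-coset over $\zeta G$ then forces $|[G,G]|\in\{2,4\}$ together with precisely the symplectic data encoded in the relations of types (II) and (III), and evaluates $\mathrm{l}(G)$ as $\frac{2^k+1}{2^{k+1}}$ or $\frac{9}{16}$, respectively.

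The hard part will be this last, class-$2$ analysis: passing from the coarse statement \enquote{every conjugacy class meeting $I$ is small} to the exact structure of $[G,G]$ and of the alternating form on $G/\zeta G$, and then carrying out the precise coset counting yielding the closed forms. The converse directions---checking that each listed group really realizes the stated value of $\mathrm{l}(G)$---are comparatively routine bilinear-form bookkeeping once the presentations are in hand, but isolating exactly the forms of types (II) and (III), and excluding every other possibility, is the technical heart of the argument.
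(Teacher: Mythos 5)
Note first that the paper does not prove this statement: it is quoted verbatim from Liebeck and MacHale \cite[Theorem 4.13]{LM72a}, and the terminal \qedsymbol\ signals (per the paper's stated conventions) that the proof lives in that reference. So there is no in-paper argument to compare you against; your proposal has to stand on its own as a proof of the Liebeck--MacHale structure theorem, and it does not.

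Your preliminary observations are correct and are indeed the standard opening of the Liebeck--MacHale analysis: $\alpha^2=\mathrm{id}$ because $\alpha^2$ fixes the generating set $I$ pointwise; the commuting lemma for $a,b,ab\in I$; and the bound $[G:\mathrm{C}_G(a)]\leq |G|/(2|I|-|G|)$ for $a\in I$, obtained from $|aI\cap I|\geq 2|I|-|G|$. From that point on, however, the proposal is a program rather than a proof, and the program skips exactly the content of the theorem. Concretely: (a) the dichotomy (either an abelian subgroup of index $2$, or nilpotent of class $2$ with $|[G,G]|\in\{2,4\}$) is asserted, not derived --- nothing in (i), (ii) or the centralizer bound produces a \emph{large pairwise-commuting} subset of $I$, since the commuting lemma only says that each $a\in I$ commutes with the elements of $I\cap a^{-1}I$, which need not commute with one another, so the existence of the index-$2$ abelian subgroup in case (I) is unsupported; (b) in the remaining case, the claims that $[G,G]\leq\zeta G$, that $G/\zeta G$ is elementary abelian of the stated rank, that the commutator pairing decomposes into precisely the hyperbolic-type relations of (II) and (III), and that $|[G,G]|$ cannot exceed $4$, are all stated as goals with no argument; and (c) the exact values $\frac{q+1}{2q}$, $\frac{2^k+1}{2^{k+1}}$ and $\frac{9}{16}$, together with the exclusion of every other configuration, are deferred to a \enquote{precise coset counting} that is never carried out. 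You acknowledge this yourself by calling it the technical heart of the argument, but that heart \emph{is} the theorem; as written, the proposal establishes only the easy preliminary lemmas.
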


\subsection{Automorphisms of finite abelian groups and \enquote{compatibility}}\label{subsec2P3}

This subsection is inspired by Hillar and Rhea's paper \cite{HR07a}. Let $H$ be a finite abelian group. Then, as any finite nilpotent group, $H$ is the direct product of its Sylow subgroups: $H=\prod_p{H_p}$, and the family $(H_p)_p$ has the splitting property; if $\alpha=\prod_p{\alpha_p}$ for some automorphism $\alpha$ of $H$, we call $(H_p,\alpha_p)$ the \textit{Sylow $p$-FDG} of $(G,\alpha)$. The study of automorphisms of $H$ thus reduces to the one of automorphisms of the single $H_p$, of which we fix one now. By the structure theorem for finite abelian groups, we can write $H_p=\prod_{i=1}^n{\mathbb{Z}/p^{e_i}\mathbb{Z}}$, where the tuple $(e_1,\ldots,e_n)$ is nondecreasing. Let $A=(a_{i,j})_{1\leq i,j\leq n}\in\mathrm{Mat}_n(\mathbb{Z})$. Denoting by $S=\{v_1,\ldots,v_n\}$ the standard generating set of $H_p$, $A$ can be understood as a representation of the function $S\rightarrow H_p$ mapping $v_j\mapsto\sum_{i=1}^n{a_{i,j}v_i}$. The following theorem is mostly a summary of some of Hillar and Rhea's results:

\begin{theoremm}\label{hillarRheaTheo}
With notation as fixed above, the following are equivalent:

(1) The assignment represented by $A$ extends to an endomorphism of $H_p$ (which we also call \textbf{represented by $A$}).

(2) For all $1\leq j\leq i\leq n$, we have $p^{e_i-e_j}\mid a_{i,j}$.

Furthermore, that endomorphism then is an automorphism if and only if $p\nmid\mathrm{det}(A)$.
\end{theoremm}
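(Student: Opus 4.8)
The plan is to read off everything from the presentation of $H_p$ as the abelian group generated by $v_1,\ldots,v_n$ subject to the relations $p^{e_j}v_j=0$ (together with commutativity). Since $H_p$ is the direct sum of the cyclic groups $\langle v_j\rangle\cong\mathbb{Z}/p^{e_j}\mathbb{Z}$, a group homomorphism out of $H_p$ into any abelian group is exactly a choice of images $w_j$ of the $v_j$ with $p^{e_j}w_j=0$. Hence the assignment $v_j\mapsto\sum_{i=1}^n a_{i,j}v_i$ extends to an endomorphism of $H_p$ if and only if $p^{e_j}\sum_{i=1}^n a_{i,j}v_i=0$ in $H_p$ for every $j$. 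Reading this componentwise in $\prod_{i=1}^n\mathbb{Z}/p^{e_i}\mathbb{Z}$, the condition becomes $p^{e_i}\mid p^{e_j}a_{i,j}$ for all $i,j$. When $e_i\leq e_j$ this holds automatically, and when $e_i>e_j$ it is equivalent to $p^{e_i-e_j}\mid a_{i,j}$. Because $(e_1,\ldots,e_n)$ is nondecreasing, $e_i>e_j$ forces $i>j$; conversely for $i<j$ one has $e_i\leq e_j$ and no constraint arises, while for $i\geq j$ with $e_i=e_j$ the exponent $e_i-e_j$ is $0$ and the condition is vacuous. Thus the complete list of constraints is precisely \enquote{$p^{e_i-e_j}\mid a_{i,j}$ for all $1\leq j\leq i\leq n$}, which is statement (2); this establishes the equivalence of (1) and (2).

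For the automorphism criterion, assume (2) holds and let $\varphi$ be the endomorphism represented by $A$. Since $H_p$ is a finite group, $\varphi$ is an automorphism if and only if it is surjective, and I would detect surjectivity on the Frattini quotient $H_p/pH_p\cong(\mathbb{Z}/p\mathbb{Z})^n$. By the Burnside basis theorem (equivalently, by Nakayama's lemma, using that $p^kH_p=0$ for large $k$, so that any subgroup $Q$ with $Q+pH_p=H_p$ already equals $H_p$), $\varphi$ is surjective if and only if the induced map $\overline{\varphi}$ on $H_p/pH_p$ is surjective, i.e.~if and only if $\overline{\varphi}\in\mathrm{GL}_n(\mathbb{F}_p)$. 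Since the images $\overline{v_1},\ldots,\overline{v_n}$ form an $\mathbb{F}_p$-basis of $H_p/pH_p$ and $\overline{\varphi}(\overline{v_j})=\sum_{i=1}^n(a_{i,j}\bmod p)\,\overline{v_i}$, the matrix of $\overline{\varphi}$ with respect to this basis is the reduction of $A$ modulo $p$. Therefore $\varphi$ is an automorphism if and only if $\mathrm{det}(A)\not\equiv 0\pmod p$, that is, if and only if $p\nmid\mathrm{det}(A)$.

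I do not expect any genuine obstacle here, as the argument is essentially bookkeeping built on the structure theorem for finite abelian groups and the Burnside basis theorem. The only point requiring care is the translation in the first paragraph between the componentwise order conditions $p^{e_i}\mid p^{e_j}a_{i,j}$ and the stated index range $1\leq j\leq i\leq n$, where the nondecreasing hypothesis on $(e_i)$ must be invoked to confirm that entries with $i<j$ impose nothing and that the diagonal and equal-exponent entries are unconstrained. For the automorphism half, the substance is entirely contained in the standard reduction of surjectivity to the Frattini quotient, after which identifying the induced matrix as $A\bmod p$ is immediate.
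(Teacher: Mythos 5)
Your proof is correct, but it is genuinely different from the paper's: the paper does not prove this statement from scratch at all, instead deriving both directions of the equivalence from \cite[Theorem 3.3]{HR07a} (with a small extra observation for (1)$\Rightarrow$(2), namely that any two integer matrices representing the same endomorphism have $i$-th rows congruent modulo $p^{e_i}$, so the divisibility condition transfers from Hillar--Rhea's canonical representative $B$ to the given $A$) and quoting \cite[Theorem 3.6]{HR07a} verbatim for the determinant criterion. You instead give a self-contained argument: the equivalence of (1) and (2) falls out of the universal property of the presentation $H_p=\bigoplus_j\langle v_j\rangle$ with $p^{e_j}v_j=0$, read componentwise and combined with the nondecreasing hypothesis on $(e_1,\ldots,e_n)$ to reduce the constraints to the index range $1\leq j\leq i\leq n$; and the automorphism criterion follows from detecting surjectivity on the Frattini quotient $H_p/pH_p$ via the Burnside basis theorem, where the induced matrix is visibly $A\bmod p$. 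Both routes are sound; the paper's buys brevity at the cost of outsourcing the content, while yours is fully elementary and, for the determinant criterion in particular, arguably cleaner than Hillar--Rhea's own computation. The only hypothesis you implicitly use is that all $e_i\geq 1$ (so that the $\overline{v_j}$ really form an $\mathbb{F}_p$-basis of $H_p/pH_p$), which is part of the structure-theorem normalization fixed in the paper, so there is no gap.
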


\begin{proof}
The implication \enquote{(2)$\Rightarrow$(1)} follows immediately from \cite[Theorem 3.3]{HR07a}. As for the implication \enquote{(1)$\Rightarrow$(2)}: If $A$ represents an endomorphism $\varphi$ of $H_p$, then by \cite[Theorem 3.3]{HR07a}, $\varphi$ can also be represented by a matrix $B=(b_{i,j})_{1\leq i,j\leq n}$ such that (2) with $b_{i,j}$ in place of $a_{i,j}$ holds, and (2) then follows since $a_{i,j}\equiv b_{i,j}\hspace{3pt}(\mathrm{mod}\hspace{3pt}p^{e_i})$. The \enquote{Furthermore} only is a reformulation of \cite[Theorem 3.6]{HR07a}.
\end{proof}

We now introduce the following concept, which will be useful in the structural treatment of LCCs in finite abelian $p$-groups:

\begin{deffinition}\label{compatibleDef}
Let $e=(e_1,\ldots,e_n),f=(f_1,\ldots,f_n)$ be nondecreasing tuples of nonnegative integers such that $e_i\geq f_i$ for $i=1,\ldots,n$.

(1) We say that $e$ and $f$ are \textbf{downward compatible} if and only if for all $1\leq j\leq i\leq n$, we have $e_j-f_j\leq e_i-f_i$.

(2) We say that $e$ and $f$ are \textbf{upward compatible} if and only if for all $1\leq j\leq i\leq n$, we have $e_j-f_j\geq e_i-f_i$.
\end{deffinition}

The following lemma, in combination with Transfer Lemma \ref{transferLem}, will allow us to transfer LCCs between finite abelian $p$-groups:

\begin{lemmma}\label{compLem}(\enquote{Compatibility Lemma})
Let $e=(e_1,\ldots,e_n),f=(f_1,\ldots,f_n)$ be nondecreasing tuples of nonnegative integers such that $e_i\geq f_i$ for $i=1,\ldots,n$. Consider the finite abelian $p$-groups $E=\prod_{i=1}^n{\mathbb{Z}/p^{e_i}\mathbb{Z}}$ and $F=\prod_{i=1}^n{\mathbb{Z}/p^{f_i}\mathbb{Z}}$, as well as the projection $\pi:E\rightarrow F$ obtained as the product of the canonical projections $\pi_i:\mathbb{Z}/p^{e_i}\mathbb{Z}\rightarrow\mathbb{Z}/p^{f_i}\mathbb{Z}$, and set $N:=\mathrm{ker}(\pi)$.

(1) If $e$ and $f$ are downward compatible (in which case we also call the groups $E$ and $F$ \textbf{downward compatible}), then for every matrix $A\in\mathrm{Mat}_n(\mathbb{Z})$ representing an automorphism $\alpha$ of $E$, $A$ also represents an automorphism $\tilde{\alpha}$ of $F$. Furthermore, $N$ is $\alpha$-admissible, and $\tilde{\alpha}$ is the automorphism of $F$ induced by $\alpha$.

(2) If $e$ and $f$ are upward compatible (in which case we also call the groups $E$ and $F$ \textbf{upward compatible}), then for every matrix $A\in\mathrm{Mat}_n(\mathbb{Z})$ representing an automorphism $\tilde{\alpha}$ of $F$, $A$ also represents an automorphism $\alpha$ of $E$. Furthermore, $N$ is $\alpha$-admissible, and $\tilde{\alpha}$ is the automorphism of $F$ induced by $\alpha$.
\end{lemmma}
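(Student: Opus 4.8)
The plan is to reduce everything to Theorem \ref{hillarRheaTheo}, which tells us exactly when an integer matrix $A=(a_{i,j})$ represents an endomorphism (resp.\ automorphism) of a finite abelian $p$-group of this shape. The first observation is that the automorphism criterion $p\nmid\det(A)$ is literally the \emph{same} condition for $E$ and for $F$, as it does not refer to the exponents $e_i,f_i$ at all. Hence the entire content of the equivalences in (1) and (2) lies in comparing the two \emph{endomorphism} criteria: by Theorem \ref{hillarRheaTheo}, $A$ represents an endomorphism of $E$ iff $p^{e_i-e_j}\mid a_{i,j}$ for all $1\le j\le i\le n$, and it represents an endomorphism of $F$ iff $p^{f_i-f_j}\mid a_{i,j}$ for all such $i,j$.

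The comparison then falls out of the compatibility hypotheses, and the only care needed is to keep the direction of the inequalities straight. For part (1), downward compatibility $e_j-f_j\le e_i-f_i$ rearranges to $e_i-e_j\ge f_i-f_j$ for all $j\le i$; hence divisibility by $p^{e_i-e_j}$ forces divisibility by the smaller power $p^{f_i-f_j}$, so the $E$-endomorphism criterion implies the $F$-endomorphism criterion, and together with the shared determinant condition an automorphism $\alpha$ of $E$ yields, via the same matrix, an automorphism $\tilde{\alpha}$ of $F$. For part (2), upward compatibility $e_j-f_j\ge e_i-f_i$ rearranges to $e_i-e_j\le f_i-f_j$, reversing the inequality, so now the $F$-criterion implies the $E$-criterion and an automorphism of $F$ lifts to one of $E$ through $A$. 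In short, downward compatibility makes the $E$-condition the stronger one, upward compatibility makes the $F$-condition the stronger one, and the automorphism condition transfers for free.

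It then remains to verify the two \enquote{Furthermore} assertions, which I would handle uniformly for both parts. Writing $v_1,\ldots,v_n$ and $w_1,\ldots,w_n$ for the standard generating sets of $E$ and $F$, the projection $\pi$ sends $v_i\mapsto w_i$, while by construction both $\alpha$ and $\tilde{\alpha}$ send the $j$-th generator according to the $j$-th column of $A$. A one-line check on generators, $\pi(\alpha(v_j))=\sum_i a_{i,j}w_i=\tilde{\alpha}(\pi(v_j))$, shows that $\pi\circ\alpha$ and $\tilde{\alpha}\circ\pi$ are homomorphisms $E\to F$ agreeing on a generating set, hence equal, so the relevant square commutes. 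From this I get $\alpha[N]\subseteq N$ for $N=\ker(\pi)$ (apply $\pi$ to $\alpha(x)$ for $x\in N$), and finiteness upgrades this to $\alpha[N]=N$, i.e.\ $\alpha$-admissibility of $N$. Finally, since $\pi$ realizes the isomorphism $E/N\cong F$ and the square commutes, $\tilde{\alpha}$ is precisely the unique automorphism of $F$ induced by $\alpha$.

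I do not anticipate a genuine obstacle: the argument is essentially a dictionary translation through Theorem \ref{hillarRheaTheo}, and the only delicate points are (a) confirming that the $p\nmid\det(A)$ condition is insensitive to the exponents, so that it needs no separate transfer, and (b) tracking which of the two divisibility conditions is the stronger one under downward versus upward compatibility. Everything else is a routine computation on generators.
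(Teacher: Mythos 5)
Your proposal is correct and follows essentially the same route as the paper: translate both endomorphism criteria through Theorem \ref{hillarRheaTheo}, observe that the compatibility hypothesis rearranges to the needed inequality between $e_i-e_j$ and $f_i-f_j$, note that the determinant condition is exponent-independent, and verify commutativity of the square on generators to obtain admissibility of $N$ and the induced-automorphism claim. The paper merely compresses the final verification into \enquote{it is immediate to check that $\pi\circ\alpha=\tilde{\alpha}\circ\pi$} and treats (2) as analogous, whereas you spell these steps out.
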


\begin{proof}
We only show (1), as the proof of (2) is analogous. Let $A=(a_{i,j})_{1\leq i,j\leq n}$. By assumption and Theorem \ref{hillarRheaTheo}, for all $1\leq j\leq i\leq n$, we have $p^{e_i-e_j}\mid a_{i,j}$ and $p\nmid\mathrm{det}(A)$. Now for all $1\leq j\leq i\leq n$, we have $f_i-f_j\leq e_i-e_j$, since this is equivalent to $e_j-f_j\leq e_i-f_i$, which holds by assumption. Hence for all $1\leq j\leq i\leq n$, we find that $p^{f_i-f_j}\mid a_{i,j}$, whence $A$ indeed defines an automorphism $\tilde{\alpha}$ of $F$, and it is immediate to check that $\pi\circ\alpha=\tilde{\alpha}\circ\pi$, which implies the rest of the statement.
\end{proof}

\subsection{Rational canonical forms and the elementary abelian case}\label{subsec2P4}

As announced in Subsection \ref{subsec1P2}, we will now treat the elementary abelian case using primary rational canonical forms, following an idea originally by Elspas \cite{Els59a} for treating the prime field case and later successfully applied by Hern{\'a}ndez-Toledo \cite{Her05a} to describe the dynamics of automorphisms of finite vector spaces in general as well. Recall that for every field $K$, each matrix $A\in\mathrm{Mat}_n(K)$ is similar to a unique block diagonal matrix whose blocks $A_1,\ldots,A_s$ are of the form $A_j=\mathrm{M}(Q_j(X))$, where $Q_1(X),\ldots,Q_s(X)\in K[X]$ are monic polynomials such that $Q_1(X)\mid Q_2(X)\mid\cdots\mid Q_s(X)$. The $Q_j(X)$, the product of which is the characteristic polynomial of $A$, are called the \textit{elementary divisors of $A$}, and the block diagonal matrix the \textit{rational canonical form of $A$} (or \textit{Frobenius normal form of $A$}). The basic theory of rational canonical forms is presented in elaborate form, for example, in \cite[Section 12.2]{DF04a}.

For studying the dynamics of endomorphisms of finite vector spaces, a variant of this is most useful: Factorizing $Q_j(X)=\prod_{l=1}^{t_j}{P_{j,l}(X)^{k_{j,l}}}$, each diagonal block $A_j$ is similar to a block diagonal matrix with blocks $\mathrm{M}(P_{j,l}^{k_{j,l}})$, $l=1,\ldots,t_j$. In total, $A$ is therefore similar to a block diagonal matrix whose blocks $B_1,\ldots,B_r$ are each of the form $B_i=\mathrm{M}(P_i(X)^{k_i})$ for some monic irreducible polynomial $P_i(X)\in K[X]$; $B_i$ is regular if and only if $P_i(0)\not=0$. This representation is unique up to reordering of the blocks, and one speaks of the \textit{primary rational canonical form of $A$}; the blocks of any such representation of $A$, counted with multiplicities, will hitherto be referred to as the \textit{primary Frobenius blocks of $A$}.

From now on, assume that $K$ is finite and $A\in\mathrm{GL}_n(K)$. Then one can give an explicit description of the cycle structure of the action of $A$ on $K^n$ in terms of its primary Frobenius blocks, see \cite[Section 6]{Her05a}. For our purposes, we only need information on the largest cycle length of the action of $A$ on $K^n$, though, which by \cite[Corollary 1]{Hor74a} (or as another consequence of the theory of rational canonical forms) coincides with the order of $A$ in the group $\mathrm{GL}_n(K)$. To this end, recall that for $Q(X)\in K[X]$ with $Q(0)\not=0$, the \textit{order of $Q(X)$}, denoted by $\mathrm{ord}(Q(X))$, is defined as the least positive integer $o$ such that $Q(X)\mid X^o-1$ in $K[X]$ (or equivalently, as the order of $X+(Q(X))$ in the group of units of the $K$-algebra $K[X]/(Q(X))$). We note the following well-known result:

\begin{propposition}\label{orderProp}(see \cite[Theorem 3.8]{LN97a})
Let $K$ be a finite field of characteristic $p$, $P(X)\in K[X]$ monic and irreducible such that $P(0)\not=0$, and let $k\in\mathbb{N}^+$. Let $\alpha\in\overline{K}$ be any root of $P(X)$, and denote by $\mathrm{ord}(\alpha)$ the order of $\alpha$ in $\overline{K}^{\ast}$. Then $\mathrm{ord}(P(X))=\mathrm{ord}(\alpha)\cdot p^{\lceil\mathrm{log}_p(k)\rceil}$.\qed
\end{propposition}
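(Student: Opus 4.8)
The plan is to reduce the divisibility that defines the order of $P(X)^k$ to a statement about roots in $\overline{K}$, and then to exploit the Frobenius identity in characteristic $p$; here I read the left-hand side as $\mathrm{ord}(P(X)^k)$, the quantity genuinely involving $k$. First I would record two facts about the root $\alpha$. Since $P(X)$ is irreducible over the finite field $K$ it is separable, so $P(X)=\prod_i(X-\alpha_i)$ in $\overline{K}[X]$ with the $\alpha_i$ the distinct Galois conjugates of $\alpha$; being conjugate, they all share the common multiplicative order $e:=\mathrm{ord}(\alpha)$ in $\overline{K}^{\ast}$. Writing $K=\mathbb{F}_q$ and $d:=\mathrm{deg}(P(X))$, we have $\alpha\in\mathbb{F}_{q^d}^{\ast}$, so $e\mid q^d-1$ and hence $p\nmid e$. (Specialising the final formula to $k=1$ will also recover $\mathrm{ord}(P(X))=e$, reconciling the two descriptions of the order.)

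Next I would analyse, for an arbitrary candidate exponent $N\in\mathbb{N}^+$, when $P(X)^k\mid X^N-1$ in $K[X]$; since $K[X]\subseteq\overline{K}[X]$ and the latter is a unique factorisation domain, this can be tested over $\overline{K}$. Writing $N=p^a m$ with $p\nmid m$, the Frobenius \enquote{freshman's dream} identity gives $X^N-1=(X^m-1)^{p^a}$ in $\overline{K}[X]$, and $X^m-1$ is separable because $p\nmid m$. Hence each conjugate $\alpha_i$ occurs in $X^N-1$ with multiplicity exactly $p^a$ when $\alpha_i^m=1$ and with multiplicity $0$ otherwise; moreover $\alpha_i^m=1$ for one (equivalently every) $i$ precisely when $e\mid m$. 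Comparing with $P(X)^k=\prod_i(X-\alpha_i)^k$, the divisibility $P(X)^k\mid X^N-1$ is therefore equivalent to the two conditions $e\mid m$ and $p^a\geq k$.

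Finally I would minimise $N=p^a m$ subject to these constraints. They decouple cleanly: the least admissible $m$ is $e$ itself (legitimate since $p\nmid e$, so $m=e$ is coprime to $p$), while the least exponent $a$ with $p^a\geq k$ is $a=\lceil\mathrm{log}_p(k)\rceil$. Since both positive factors can be minimised simultaneously, the minimal exponent is $N=e\cdot p^{\lceil\mathrm{log}_p(k)\rceil}=\mathrm{ord}(\alpha)\cdot p^{\lceil\mathrm{log}_p(k)\rceil}$, as claimed. The step needing the most care is the multiplicity bookkeeping in the middle paragraph — justifying that $\alpha_i$ occurs with multiplicity exactly $p^a$, neither more nor less, in $(X^m-1)^{p^a}$, which rests squarely on the separability of $X^m-1$ — together with the remark that the constraints on $a$ and on $m$ are genuinely independent, so that the product $p^a m$ is minimised termwise rather than through some trade-off between the two.
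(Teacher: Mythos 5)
Your argument is correct. Note that the paper does not actually prove Proposition \ref{orderProp} at all --- the terminal \qedsymbol\hspace{0.5pt} signals that it is quoted from \cite[Theorem 3.8]{LN97a} --- so there is no in-paper proof to compare against; your reading of the left-hand side as $\mathrm{ord}(P(X)^k)$ is indeed the intended statement (this is how the result is applied in the proof of Theorem \ref{elAbTheo}, and it specialises correctly to $k=1$), and your argument --- separability of $P(X)$ and of $X^m-1$ for $p\nmid m$, the identity $X^{p^am}-1=(X^m-1)^{p^a}$, the resulting multiplicity count, and the observation that the constraints on $p^a$ and on $m$ decouple so the product is minimised termwise --- is essentially the standard proof given in Lidl and Niederreiter.
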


Coming back to our original question on orders of finite vector space automorphisms:

\begin{corrollary}\label{herCor}(follows from \cite[Theorems 4 and 5]{Her05a})
Let $K$ be a finite field.

(1) Let $P(X)\in K[X]$ with $P(0)\not=0$. Then we have $\mathrm{ord}(\mathrm{M}(P(X)))=\mathrm{ord}(P(X))$.

(2) Let $A\in\mathrm{GL}_n(K)$, and say the primary Frobenius blocks of $A$ are the companion matrices of $P_1(X)^{k_1},\ldots,P_r(X)^{k_r}$, with each $P_i(X)\in K[X]$ irreducible. Then we have $\mathrm{ord}(A)=\mathrm{lcm}(\mathrm{ord}(P_1(X)^{k_1}),\ldots,\mathrm{ord}(P_r(X)^{k_r}))$.\qed
\end{corrollary}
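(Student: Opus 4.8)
The plan is to reduce everything to the single-block statement (1), which itself rests on the algebra dictionary between $K[X]/(P(X))$ and the subalgebra of $\mathrm{Mat}_n(K)$ generated by the companion matrix. For part (1), I would first recall the standard fact that the minimal polynomial of $\mathrm{M}(P(X))$ equals $P(X)$; equivalently, the assignment $X+(P(X))\mapsto\mathrm{M}(P(X))$ extends to a $K$-algebra isomorphism $K[X]/(P(X))\xrightarrow{\sim}K[\mathrm{M}(P(X))]$. From this one gets the crucial criterion: for every $Q(X)\in K[X]$, one has $Q(\mathrm{M}(P(X)))=0$ if and only if $P(X)\mid Q(X)$ (the annihilating polynomials of the matrix form the ideal generated by its minimal polynomial).

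With the criterion in hand, part (1) is immediate. Since $P(0)\neq 0$, the constant term of $P$ is nonzero, so $\det\mathrm{M}(P(X))=\pm P(0)\neq 0$ and $\mathrm{M}(P(X))$ is invertible; its order in $\mathrm{GL}_n(K)$ is then the least positive integer $o$ with $\mathrm{M}(P(X))^o=I$, i.e.\ with $(X^o-1)(\mathrm{M}(P(X)))=0$. By the divisibility criterion, this says exactly $P(X)\mid X^o-1$, whose least positive solution is by definition $\mathrm{ord}(P(X))$. Comparing the two ``least $o$'' descriptions yields $\mathrm{ord}(\mathrm{M}(P(X)))=\mathrm{ord}(P(X))$.

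For part (2), I would use that order is a similarity invariant and that powers of a block-diagonal matrix act blockwise. As $A$ is similar to the block-diagonal matrix $B$ with diagonal blocks $\mathrm{M}(P_i(X)^{k_i})$, we have $\mathrm{ord}(A)=\mathrm{ord}(B)$, and $B^o=I$ holds precisely when $\mathrm{M}(P_i(X)^{k_i})^o=I$ for all $i$; the least such $o$ is therefore $\mathrm{lcm}_i\,\mathrm{ord}(\mathrm{M}(P_i(X)^{k_i}))$. Since $A$ is regular, each block is regular, so (as noted in the excerpt) $P_i(0)\neq 0$ for every $i$, hence each $P_i(X)^{k_i}$ has nonzero constant term and $\mathrm{ord}(P_i(X)^{k_i})$ is well-defined. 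Applying part (1) blockwise converts the lcm into $\mathrm{lcm}(\mathrm{ord}(P_1(X)^{k_1}),\ldots,\mathrm{ord}(P_r(X)^{k_r}))$, as claimed.

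I do not expect a genuine obstacle: the result is bookkeeping once the companion-matrix/quotient-ring dictionary is set up. The one point needing mild care is that all the polynomial orders $\mathrm{ord}(P_i(X)^{k_i})$ be defined, which is exactly where the regularity of $A$ (forcing each $P_i(0)\neq 0$) is used; without invertibility the notion of order, and hence the whole statement, would be vacuous. If an explicit numerical answer were wanted in place of an lcm of polynomial orders, one could additionally feed each $\mathrm{ord}(P_i(X)^{k_i})$ through Proposition \ref{orderProp}, but this is not needed for the corollary as stated.
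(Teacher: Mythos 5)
Your proof is correct and complete. Note that the paper itself does not prove this corollary at all: the \qedsymbol\ placed at the end of the statement is the paper's convention for ``not proved here,'' and the result is simply attributed to Theorems 4 and 5 of Hern\'andez-Toledo's paper on linear finite dynamical systems. So there is no in-paper argument to compare against; what you have supplied is a self-contained replacement for that citation. Your route is the standard one and it is sound: the minimal polynomial of $\mathrm{M}(P(X))$ equals $P(X)$, so the annihilator ideal of the companion matrix is $(P(X))$, and hence $\mathrm{M}(P(X))^o=I$ if and only if $P(X)\mid X^o-1$, which matches the paper's definition of $\mathrm{ord}(P(X))$ as the least such $o$; regularity is guaranteed by $\det\mathrm{M}(P(X))=\pm P(0)\neq 0$. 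Part (2) then follows from similarity invariance of the order and the blockwise action of powers on a block-diagonal matrix, with the lcm emerging as the least common annihilating exponent; your observation that regularity of $A$ forces $P_i(0)\neq 0$ for each block, so that every $\mathrm{ord}(P_i(X)^{k_i})$ is defined, is exactly the right point of care. The only thing your write-up buys beyond the paper is independence from the external reference, which is a reasonable trade given how short the argument is.
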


We are now ready for a comprehensive discussion of the elementary abelian case:

\begin{theoremm}\label{elAbTheo}
Let $p$ be a prime, $n\in\mathbb{N}^+$ and $\alpha$ an automorphism of $(\mathbb{Z}/p\mathbb{Z})^n$ such that $\lambda(\alpha)\geq\frac{1}{2}$.

\begin{enumerate}
\item If $p$ is odd and $n\not=2$, then with respect to an appropriate basis, $\alpha$ is given by the companion matrix of a monic primitive irreducible polynomial of degree $n$ over $\mathbb{F}_p$. Conversely, all such $\alpha$ satisfy $\lambda(\alpha)=1-\frac{1}{p^n}>\frac{1}{2}$.

\item If $p$ is odd and $n=2$, then with respect to an appropriate basis, $\alpha$ is given either by the companion matrix of a monic primitive irreducible polynomial of degree $2$ over $\mathbb{F}_p$ (and conversely, for such $\alpha$, we have $\lambda(\alpha)=1-\frac{1}{p^2}>\frac{1}{2}$) or by the companion matrix of $(X-g)^2\in\mathbb{F}_p[X]$ for some generator $g$ of $\mathbb{F}_p^{\ast}$ (and conversely, all such $\alpha$ satisfy $\lambda(\alpha)=1-\frac{1}{p}>\frac{1}{2}$).

\item If $p=2$, then one of the following holds:

\begin{itemize}
\item $n=2$ and $\alpha$ is given, with respect to an appropriate basis, by the companion matrix of $(X-1)^2\in\mathbb{F}_2[X]$. Conversely, in this case, $\lambda(\alpha)=\frac{1}{2}$.

\item $n=3$ and $\alpha$ is given, with respect to an appropriate basis, by the companion matrix of $(X-1)^3\in\mathbb{F}_2[X]$. Conversely, in this case, $\lambda(\alpha)=\frac{1}{2}$.

\item $\alpha$ is given, with respect to an appropriate basis, by the companion matrix of a squarefree polynomial $P(X)=\prod_{i=1}^r{P_i(X)}$, where the $P_i(X)$ are pairwise distinct primitive irreducible polynomials over $\mathbb{F}_2$ such that, setting $d_i:=\mathrm{deg}(P_i(X))$ for $i=1,\ldots,r$, the $d_i$ are pairwise coprime and $\prod_{i=1}^r{(1-\frac{1}{2^{d_i}})}\geq\frac{1}{2}$. In this case, $\lambda(\alpha)=\prod_{i=1}^r{(1-\frac{1}{2^{d_i}})}$.
\end{itemize}
\end{enumerate}
\end{theoremm}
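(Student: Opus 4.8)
The plan is to translate the problem into linear algebra over $\mathbb{F}_p$ and read off everything from the primary rational canonical form. Identifying $(\mathbb{Z}/p\mathbb{Z})^n$ with $\mathbb{F}_p^n$, the automorphism $\alpha$ becomes a matrix $A\in\mathrm{GL}_n(\mathbb{F}_p)$, and as recalled just before Corollary \ref{herCor} the largest cycle length of $\alpha$ equals $\mathrm{ord}(A)$, so that $\lambda(\alpha)=\mathrm{ord}(A)/p^n$. Passing to the primary rational canonical form, $A$ is conjugate to a block diagonal matrix with blocks $B_i=\mathrm{M}(P_i(X)^{k_i})$ ($i=1,\dots,r$), with $P_i$ monic irreducible of degree $d_i$ and $\sum_i d_ik_i=n$. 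Correspondingly $\mathbb{F}_p^n$ splits as a direct sum of $A$-invariant subspaces, so that as an FDS $(\mathbb{F}_p^n,A)$ is the product of the single-block FDSs $(\mathbb{F}_p^{d_ik_i},B_i)$. Writing $L_i:=\lambda(B_i)=\mathrm{ord}(P_i(X)^{k_i})/p^{d_ik_i}$ via Corollary \ref{herCor}(1), each $L_i<1$ because the zero vector is fixed.

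Next I would feed this into Product Lemma \ref{productLem}(1). If the values $\Lambda(B_i)=\mathrm{ord}(P_i(X)^{k_i})$ were not pairwise coprime, the lemma would give $\lambda(\alpha)\le\tfrac12\prod_iL_i<\tfrac12$, contradicting the hypothesis; hence they are pairwise coprime and $\lambda(\alpha)=\prod_iL_i\ge\tfrac12$. Since every factor satisfies $L_j\le1$, one has $\prod_iL_i\le L_j$ for each $j$, so in fact $L_j\ge\tfrac12$ for every block. The heart of the argument is then to classify the single blocks with $L_j\ge\tfrac12$. Using Proposition \ref{orderProp}, for $k_i=1$ one gets $L_i=\mathrm{ord}(\zeta_i)/p^{d_i}$ for a root $\zeta_i$ of $P_i$, and since the largest proper divisor of $p^{d_i}-1$ is at most $(p^{d_i}-1)/2<p^{d_i}/2$, the bound $L_i\ge\tfrac12$ forces $\mathrm{ord}(\zeta_i)=p^{d_i}-1$, i.e.\ $P_i$ primitive. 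For $k_i\ge2$ one has $L_i\le(p^{d_i}-1)p^{\lceil\log_p k_i\rceil}/p^{d_ik_i}$, and a short check of the few exponents for which this can reach $\tfrac12$ leaves only the triples $(d_i,k_i,p)$ with $d_i=1,k_i=2$ (all $p$), together with $d_i=1,k_i=3,p=2$; these correspond to $(X-g)^2$ with $g$ a generator of $\mathbb{F}_p^\ast$ (value $1-\tfrac1p$) and, for $p=2$, to $(X-1)^2$ and $(X-1)^3$ (value $\tfrac12$).

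With every block pinned down, I would finish by a case split on $p$. For $p$ odd the key observation is a parity argument: every block with $L_i\ge\tfrac12$ has even order $\Lambda(B_i)$ (a primitive block has order $p^{d_i}-1$, and the $(X-g)^2$-block has order $(p-1)p$, both even), whereas a block of order $1$ would have $L_i=1/p<\tfrac12$ and is thus excluded. Pairwise coprimality then forbids two blocks, so $r=1$; the single block is either a primitive irreducible of degree $n$ (giving $\lambda=1-p^{-n}$) or, possible only when $n=2$, the $(X-g)^2$-block (giving $\lambda=1-\tfrac1p$), which is exactly the dichotomy of statements (1) and (2). For $p=2$ the parity argument is unavailable, but the blocks $(X-1)^2$ and $(X-1)^3$ have $L_i=\tfrac12$ exactly, so if either occurs it must be the only block (any further factor $<1$ would push the product below $\tfrac12$); these give the $n=2$ and $n=3$ bullets. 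In the remaining case all blocks are primitive irreducible with $k_i=1$, and coprimality of the $2^{d_i}-1$ is equivalent to coprimality of the degrees $d_i$ (and rules out repeated polynomials); the condition $\lambda(\alpha)=\prod_i(1-2^{-d_i})\ge\tfrac12$ is then exactly the third bullet. The converse (value) assertions in all cases are immediate recomputations of the $L_i$ together with the equality clause of Product Lemma \ref{productLem}.

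The step I expect to be the main obstacle is the single-block classification for $k_i\ge2$: it requires controlling $p^{\lceil\log_p k_i\rceil}$ against $p^{d_ik_i}$ simultaneously in $p$, $d_i$ and $k_i$, and checking by hand the handful of small triples where the bound is not already comfortably below $\tfrac12$ (e.g.\ distinguishing $k_i=3,p=2$, which survives, from $k_i=3,p\ge3$, which does not). Everything else is bookkeeping once the parity argument (for odd $p$) and the observation that a single $\tfrac12$-block cannot coexist with any other factor (for $p=2$) are in place.
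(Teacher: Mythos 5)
Your proposal is correct and follows essentially the same route as the paper: primary rational canonical form, the order formula of Proposition \ref{orderProp} and Corollary \ref{herCor}, and Product Lemma \ref{productLem} to force pairwise coprime block orders and thereby reduce to classifying single primary blocks of $\lambda$-value at least $\frac{1}{2}$. The only notable difference is that the paper first observes that the largest cycle generates the whole group, so that $\alpha$ is cyclic and given by a single companion matrix (which makes the irreducible factors automatically distinct), whereas you handle possibly repeated primary blocks directly through the coprimality clause of the Product Lemma --- an equally valid shortcut.
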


\begin{proof}
First note that in any case, since the domain of the largest cycle of $\alpha$ generates the entire group $(\mathbb{Z}/p\mathbb{Z})^n$, $\alpha$ is a cyclic $\mathbb{F}_p$-vector space automorphism, i.e., it can be represented by $\mathrm{M}(P(X))$ for a single monic polynomial $P(X)\in K[X]$ such that $P(0)\not=0$. Factorize $P(X)=\prod_{i=1}^r{P_i(X)^{k_i}}$, set $d_i:=\mathrm{deg}(P_i(X))$, $i=1,\ldots,r$, and note that by Corollary \ref{herCor}(2), $\mathrm{ord}(\alpha)=\mathrm{lcm}(\mathrm{ord}(P_1(X)^{k_1}),\ldots,\mathrm{ord}(P_r(X)^{k_r}))$. Now for $i=1,\ldots,r$, by Proposition \ref{orderProp} and Corollary \ref{herCor}(1), $\mathrm{ord}(P_i(X)^{k_i})$ is a divisor of $(p^{d_i}-1)\cdot p^{\lceil\mathrm{log}_p(k_i)\rceil}$, and in view of Product Lemma \ref{productLem}(2,ii), it cannot be a proper divisor; in particular, $P_i(X)$ must be primitive.

If $p$ is odd, this implies that $2\mid\mathrm{ord}(P_i(X)^{k_i})$, whence by Product Lemma \ref{productLem}(2,ii), we have $r=1$. Furthermore, for odd $p$, we find that $\lceil\mathrm{log}_p(k_1)\rceil\leq k_1-1$ for all $k_1\geq 1$ and $\lceil\mathrm{log}_p(k_1)\rceil\leq k_1-2$ for all $k_1\geq 3$. Using the second bound, we obtain that for $k_1\geq 3$, we have \[\mathrm{ord}(\alpha)<p^{d_1}\cdot p^{k_1-2}=p^{d_1+k_1-2}\leq p^{d_1k_1-1}=\frac{1}{p}\cdot|(\mathbb{Z}/p\mathbb{Z})^{d_1k_1}|,\] a contradiction. For $k_1\leq 2$, using the first bound, we obtain similarly that any case in which $d_1+k_1\leq d_1k_1$ holds is contradictory. However, this inequality is satisfied whenever both $k_1$ and $d_1$ are greater than $1$, so if $k_1=2$, then necessarily $d_1=1$, whence $\alpha$ then is represented by the companion matrix of $(X-g)^2$ for some generator $g$ of $\mathbb{F}_p^{\ast}$ (as all primitive polynomials of degree $1$ over $\mathbb{F}_p$ are of that form), and this indeed yields an automorphism of $(\mathbb{Z}/p\mathbb{Z})^2$ of $\lambda$-value $1-\frac{1}{p}$. The case $k_1=1$ corresponds to all the \enquote{standard} examples with $\lambda$-value $1-\frac{1}{p^{d_1}}$.

We now conclude by discussing the case $p=2$. First, we will treat the special case where $P(X)$ is a power $P_1(X)^{k_1}$ of an irreducible polynomial $P_1(X)$, with $d_1:=\mathrm{deg}(P_1(X))$. Since for all $k_1\geq 1$, we have the inequality $\lceil\mathrm{log}_2(k_1)\rceil\leq k_1-1$, we can conclude as before that every case where $d_1+k_1\leq d_1k_1$ is contradictory; in particular, $d_1$ and $k_1$ cannot both be greater than $1$. If $k_1\geq 2$ (and hence $d_1=1$), we find that $\mathrm{ord}(\alpha)=2^{\lceil\mathrm{log}_2(k_1)\rceil}$, whence for $k_1\geq 4$, we have $\mathrm{ord}(\alpha)\leq 2^{k_1-2}=\frac{1}{4}\cdot|(\mathbb{Z}/2\mathbb{Z})^{k_1}|$, a contradiction. On the other hand, for $k_1=2$ or $k_1=3$, we do obtain automorphisms of $(\mathbb{Z}/2\mathbb{Z})^2$ and $(\mathbb{Z}/2\mathbb{Z})^3$ respectively with $\lambda$-value precisely $\frac{1}{2}$, as described in points (i) and (ii) above. This concludes the analysis of the special case.

However, that analysis implies via Product Lemma \ref{productLem}(2,ii) that if $P(X)$ has more than one factor $P_i(X)^{k_i}$, then no $k_i$ can be greater than $1$, so $P(X)$ is square-free, and its factors are monic primitive irreducible polynomials. In view of Product Lemma \ref{productLem}(2,ii), their orders $2^{d_i}-1$ must be pairwise coprime, which is equivalent to the pairwise coprimality of the degrees $d_i$, and conversely, in this case, the $\lambda$-value of $\alpha$ equals $\prod_{i=1}^r{(1-\frac{1}{2^{d_i}})}$.
\end{proof}

We also note the following consequence of Theorem \ref{elAbTheo}:

\begin{corrollary}\label{properCor}
Let $(G,\alpha)$ be a periodic elementary abelian $2$-FDG such that $|G|\geq 2^4$ and $\lambda(\alpha)\geq\frac{1}{2}$. Then $\lambda(\alpha)>\frac{1}{2}$.
\end{corrollary}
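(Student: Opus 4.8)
The plan is to reduce immediately to the case analysis already carried out in Theorem \ref{elAbTheo}(3), the case $p=2$, and then to extract a contradiction from a simple parity observation. Since $G=(\mathbb{Z}/2\mathbb{Z})^n$ has $|G|=2^n$, the hypothesis $|G|\geq 2^4$ says exactly $n\geq 4$. As $\lambda(\alpha)\geq\frac{1}{2}$ by assumption, Theorem \ref{elAbTheo} places $\alpha$ into one of the three listed subcases for $p=2$, and the whole argument amounts to ruling out equality $\lambda(\alpha)=\frac{1}{2}$ in each surviving subcase.

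First I would dispose of the two exceptional subcases. These occur only for $n=2$ (the companion matrix of $(X-1)^2$) and for $n=3$ (the companion matrix of $(X-1)^3$), giving $|G|=2^2$ and $|G|=2^3$, both strictly smaller than $2^4$. Hence the hypothesis $|G|\geq 2^4$ excludes them outright, and we may assume $\alpha$ lies in the third subcase: it is represented by the companion matrix of a squarefree product $P(X)=\prod_{i=1}^r P_i(X)$ of pairwise distinct primitive irreducible polynomials over $\mathbb{F}_2$, whose degrees $d_i$ are pairwise coprime, with
\[
\lambda(\alpha)=\prod_{i=1}^r\left(1-\frac{1}{2^{d_i}}\right).
\]

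The decisive step is then the parity of the numerator. Writing $n=\sum_{i=1}^r d_i\geq 4$, we have
\[
\lambda(\alpha)=\prod_{i=1}^r\frac{2^{d_i}-1}{2^{d_i}}=\frac{\prod_{i=1}^r(2^{d_i}-1)}{2^{n}}.
\]
Suppose, toward a contradiction, that $\lambda(\alpha)=\frac{1}{2}$. Then $\prod_{i=1}^r(2^{d_i}-1)=2^{n-1}$. The left-hand side is a product of odd integers, hence odd, whereas the right-hand side $2^{n-1}$ is even because $n-1\geq 1$. This contradiction shows $\lambda(\alpha)\neq\frac{1}{2}$, and combined with the hypothesis $\lambda(\alpha)\geq\frac{1}{2}$ it yields $\lambda(\alpha)>\frac{1}{2}$.

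There is no serious obstacle here, as the corollary is essentially a bookkeeping consequence of Theorem \ref{elAbTheo}. The only point meriting attention is the verification that the two equality-producing subcases of the $p=2$ classification are exactly the small-dimensional ones ($n=2,3$) excluded by $|G|\geq 2^4$, so that the parity argument indeed covers every remaining case. Equivalently, one may phrase the crux as the remark that $\prod_{i=1}^r(2^{d_i}-1)/2^{n}=\frac{1}{2}$ forces $n=1$, which is incompatible with $n\geq 4$.
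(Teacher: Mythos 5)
Your proposal is correct and is essentially the paper's own argument: both reduce via Theorem \ref{elAbTheo} to the squarefree-product case (the two exceptional subcases being excluded by $|G|\geq 2^4$) and then observe that $\lambda(\alpha)=\prod_{i=1}^r(2^{d_i}-1)/2^n$ has odd numerator, so it cannot equal $\frac{1}{2}$ unless $n=1$. The paper phrases the parity step as \enquote{no cancellation is needed, hence the numerator would have to be $1$}, while you phrase it as \enquote{the numerator would have to be $2^{n-1}$, which is even}; these are the same observation.
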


\begin{proof}
By Theorem \ref{elAbTheo}, there exist $r\in\mathbb{N}^+$ and pairwise coprime integers $d_1,\ldots,d_r\geq 2$ such that $\frac{1}{2}=\lambda(\alpha)=\prod_{i=1}^r{\frac{2^{d_i}-1}{2^{d_i}}}$. Since the numerators $2^{d_i}-1$ are odd and the denominators $2^{d_i}$ have no odd prime divisors, no cancelations are necessary in order to pass to the canceled form $\frac{1}{2}$, whence we conclude that $1=\prod_{i=1}^r{(2^{d_i}-1)}$, a contradiction.
\end{proof}

\begin{remmark}\label{strengthenRem}
In Subsection \ref{subsec2P7}, we will see (Lemma \ref{anLem}(1)) that under the assumptions of Corollary \ref{properCor}, we can even deduce $\lambda(\alpha)>\frac{9}{16}$.
\end{remmark}

\subsection{Classification of periodic abelian FDGs \texorpdfstring{$(G,\alpha)$}{(G,alpha)} with \texorpdfstring{$\lambda(\alpha)>\frac{1}{2}$}{lambda(alpha)>1/2}}\label{subsec2P5}

Recall that for a finite $p$-group $G$ and $i\in\mathbb{N}$, the \textit{$i$-th omega subgroup of $G$} is defined as $\Omega_i(G):=\langle g\in G\mid g^{p^i}=1_G\rangle$, and the \textit{$i$-th agemo subgroup of $G$} is defined as $\mho^i(G):=\langle g^{p^i}\mid g\in G\rangle$. We begin by observing that each nondecreasing tuple $(e_1,\ldots,e_n)$ is upward compatible with the constant tuple $(e_n,\ldots,e_n)$. Hence we will first try to better understand automorphisms with $\lambda$-value at least $\frac{1}{2}$ of groups of the form $(\mathbb{Z}/p^{e_n}\mathbb{Z})^n$. The case $e_n=1$ was treated in Subsection \ref{subsec2P4}, and luckily, the case $e_n\geq 2$ is much easier, as there are no such automorphisms in this case for $n\geq 2$:

\begin{propposition}\label{boundProp}
Let $p$ be a prime, $n,e_n\in\mathbb{N}^+$. Then $\mathrm{mao}((\mathbb{Z}/p^{e_n}\mathbb{Z})^n)\leq p^{e_n-1}(p^n-1)$. In particular, $\lambda((\mathbb{Z}/p^{e_n}\mathbb{Z})^n)<\frac{1}{2}$ whenever $n,e_n\geq 2$.
\end{propposition}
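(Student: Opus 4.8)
The plan is to identify $\mathrm{mao}((\mathbb{Z}/p^{e_n}\mathbb{Z})^n)$ with the maximum element order of $\mathrm{Aut}((\mathbb{Z}/p^{e_n}\mathbb{Z})^n)=\mathrm{GL}_n(\mathbb{Z}/p^{e_n}\mathbb{Z})$ and to bound it by separating the reduction modulo $p$ from the congruence subgroup. Write $e:=e_n$, $R:=\mathbb{Z}/p^e\mathbb{Z}$, and let $\pi:\mathrm{GL}_n(R)\to\mathrm{GL}_n(\mathbb{F}_p)$ be entrywise reduction modulo $p$, with kernel the congruence subgroup $\Gamma_1=I+p\,\mathrm{Mat}_n(R)$. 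For any $\alpha\in\mathrm{GL}_n(R)$, setting $m:=\mathrm{ord}(\pi(\alpha))$ we have $\alpha^m\in\Gamma_1$, hence $\mathrm{ord}(\alpha)\mid m\cdot\exp(\Gamma_1)$. So it suffices to establish the two bounds $\exp(\Gamma_1)\mid p^{e-1}$ and $m\le\mathrm{meo}(\mathrm{GL}_n(\mathbb{F}_p))\le p^n-1$; their product then gives $\mathrm{mao}\le p^{e-1}(p^n-1)$.

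For the congruence subgroup I would introduce the filtration $\Gamma_i:=I+p^i\,\mathrm{Mat}_n(R)$ and prove $\Gamma_i^p\subseteq\Gamma_{i+1}$ for every $i\ge 1$. Expanding $(I+p^iB)^p$ by the binomial theorem, the linear term $p^{i+1}B$ already lies in $\Gamma_{i+1}$, and a short $p$-adic valuation count shows that every higher term $\binom{p}{j}p^{ij}B^j$ with $2\le j\le p$ has all entries divisible by $p^{i+1}$ (using $\nu_p(\binom{p}{j})\ge 1$ for $j<p$ and $ip\ge i+1$ for $j=p$, which works uniformly, including the boundary case $p=2$). Since $\Gamma_e=\{I\}$, iterating yields $\Gamma_1^{p^{e-1}}=\{I\}$, i.e. $\exp(\Gamma_1)\mid p^{e-1}$.

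For the field-level factor I would invoke the primary rational canonical form machinery already recorded. By Corollary \ref{herCor}(2) the order of any $\bar\alpha\in\mathrm{GL}_n(\mathbb{F}_p)$ is the least common multiple of the orders $\mathrm{ord}(P_i(X)^{k_i})$ of its primary Frobenius blocks, and by Proposition \ref{orderProp} each such order equals $\mathrm{ord}(\beta_i)\cdot p^{\lceil\log_p k_i\rceil}$, where $\beta_i$ is a root of $P_i$, so that $\mathrm{ord}(\beta_i)\mid p^{d_i}-1$ with $d_i:=\mathrm{deg}(P_i(X))$. A block of total size $s_i:=d_ik_i$ thus contributes at most $(p^{d_i}-1)p^{\lceil\log_p k_i\rceil}$, and an elementary estimate ($\lceil\log_p k\rceil\le k-1$, whence $d+\lceil\log_p k\rceil\le dk$) shows this is $\le p^{s_i}-1$. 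Finally the two-factor inequality $(p^a-1)(p^b-1)\le p^{a+b}-1$, applied inductively, gives $\mathrm{lcm}_i\le\prod_i(p^{s_i}-1)\le p^{\sum_i s_i}-1=p^n-1$ since $\sum_i s_i=n$. This yields $m\le p^n-1$ and completes the bound on $\mathrm{mao}$.

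The \enquote{in particular} clause then follows by division: since $G=(\mathbb{Z}/p^e\mathbb{Z})^n$ is abelian we have $\Lambda(G)=\mathrm{mao}(G)$ by \cite[Corollary 1]{Hor74a}, so $\lambda(G)=\mathrm{mao}(G)/p^{en}\le(p^n-1)/p^{e(n-1)+1}<p^{(n-1)(1-e)}$, which for $n,e\ge 2$ is at most $p^{-1}\le\frac{1}{2}$. The conceptually clean step is the reduction/congruence-subgroup split; the part demanding the most care is verifying that no combination of Frobenius blocks over $\mathbb{F}_p$ can beat a single Singer cycle, i.e. the elementary inequalities forcing $\mathrm{meo}(\mathrm{GL}_n(\mathbb{F}_p))\le p^n-1$, together with checking the valuation estimate for $\Gamma_1$ uniformly in $p$.
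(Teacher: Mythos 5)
Your proof is correct. It shares the paper's two-step skeleton -- bound the $p$-part of the order coming from the kernel of reduction by $p^{e_n-1}$, and bound the mod-$p$ part by $p^n-1$ -- but realizes both steps differently. The paper proceeds by induction on $e_n$: the restriction of $\alpha$ to $\mho^1(G)\cong(\mathbb{Z}/p^{e_n-1}\mathbb{Z})^n$ has order $o$ bounded by the induction hypothesis, and then $\alpha^o=\mathrm{id}+f$ with $f$ mapping into $\Omega_1(G)$ and $f^2=0$, so $\alpha^{po}=\mathrm{id}$; the base case $e_n=1$ is a one-line citation of Horo\v{s}evski\u{\i}'s general bound $\mathrm{mao}(H)\leq|H|-1$. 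Your congruence filtration $\Gamma_i=I+p^i\mathrm{Mat}_n(R)$ with $\Gamma_i^p\subseteq\Gamma_{i+1}$ is the matrix-level packaging of the same $p$-adic phenomenon (your valuation count, including the $j=p$ boundary term, is correct), and peeling off the layers of that filtration is exactly what the paper's induction does implicitly one agemo subgroup at a time. Where you genuinely diverge is the field-level factor: instead of quoting Horo\v{s}evski\u{\i}, you rederive $\mathrm{meo}(\mathrm{GL}_n(\mathbb{F}_p))\leq p^n-1$ from Proposition \ref{orderProp} and Corollary \ref{herCor} via the block-wise estimate $(p^{d}-1)p^{\lceil\log_p k\rceil}\leq p^{dk}-1$ and the inequality $(p^a-1)(p^b-1)\leq p^{a+b}-1$; this is self-contained within the paper's own canonical-form machinery and slightly more informative (it shows no mixture of Frobenius blocks beats a Singer cycle), at the cost of being longer than the citation. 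Your derivation of the \enquote{in particular} clause matches the paper's computation; note that you only need the trivial inequality $\Lambda(G)\leq\mathrm{mao}(G)$ there, not the full strength of \cite[Corollary 1]{Hor74a}.
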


\begin{proof}
We proceed by induction on $e_n$, the induction base $e_n=1$ following immediately from \cite[Theorem 2]{Hor74a}. Now assume $e_n\geq 2$ and set $G:=(\mathbb{Z}/p^{e_n}\mathbb{Z})^n$. Note that $\mho^1(G)\cong(\mathbb{Z}/p^{e_n-1}\mathbb{Z})^n$, so by the induction hypothesis, the restriction $\overline{\alpha}$ of $\alpha$ to $\mho^1(G)$ has order $o$ bounded by $p^{e_n-2}(p^n-1)$. It is not difficult to see that $\alpha^o=\mathrm{id}+f$ for some homomorphism $f:G\rightarrow\Omega_1(G)\cong(\mathbb{Z}/p\mathbb{Z})^n$. Since every element of order $p$ in $G$ is a $p$-fold, we have $f^2=f\circ f=0$, whence $\alpha^{p\cdot o}=(\mathrm{id}+f)^p=\mathrm{id}+p\cdot f=\mathrm{id}$, and so $\mathrm{ord}(\alpha)\leq p\cdot o\leq p^{e_n-1}(p^n-1)$. For the \enquote{In particular}, note that if $n,e_n\geq 2$, then \[\frac{p^{e_n-1}(p^n-1)}{p^{n\cdot e_n}}<\frac{p^{n+e_n-1}}{p^{n\cdot e_n}}=p^{n+e_n-n\cdot e_n-1}\leq p^{-1}\leq\frac{1}{2}.\]
\end{proof}

We can now finally tackle the $\frac{1}{2}$-LCC in finite abelian groups:

\begin{theoremm}\label{lccAbTheo}
Let $G$ be a finite abelian $p$-group for some prime $p$ such that $\lambda(G)\geq\frac{1}{2}$. Then either $G$ is elementary abelian or primary cyclic or $G\cong\mathbb{Z}/2\mathbb{Z}\times\mathbb{Z}/4\mathbb{Z}$.
\end{theoremm}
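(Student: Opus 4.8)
The plan is to take a finite abelian $p$-group $G=\prod_{i=1}^n\mathbb{Z}/p^{e_i}\mathbb{Z}$ with $e_1\le\cdots\le e_n$ and $\lambda(G)\ge\frac12$, assume that $G$ is neither elementary abelian (so $e_n\ge2$) nor primary cyclic (so $n\ge2$), and deduce that $G\cong\mathbb{Z}/2\mathbb{Z}\times\mathbb{Z}/4\mathbb{Z}$. Throughout I would use that for nilpotent, hence abelian, groups one has $\lambda(G)=\mathrm{mao}(G)/|G|$ (Example \ref{lambdaEx} and \cite[Corollary 1]{Hor74a}), so that the task reduces to bounding maximum automorphism orders.

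First I would cut $G$ down to a two-parameter family. Since $(e_1,\ldots,e_n)$ is upward compatible with the constant tuple $(e_n,\ldots,e_n)$, Compatibility Lemma \ref{compLem}(2) lets me lift every automorphism of $G$ to an automorphism of $(\mathbb{Z}/p^{e_n}\mathbb{Z})^n$ inducing it, whence $\mathrm{mao}(G)\le\mathrm{mao}((\mathbb{Z}/p^{e_n}\mathbb{Z})^n)\le p^{e_n-1}(p^n-1)$ by Proposition \ref{boundProp}. Thus $\lambda(G)\le\frac{p^n-1}{p^{1+\sum_{i<n}e_i}}$, and $\lambda(G)\ge\frac12$ becomes impossible as soon as some $e_i$ with $i<n$ exceeds $1$: then $\sum_{i<n}e_i\ge n$, the denominator is at least $p^{n+1}$, while $2(p^n-1)<p^{n+1}$. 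Hence $e_1=\cdots=e_{n-1}=1$ and $G\cong(\mathbb{Z}/p\mathbb{Z})^{a}\times\mathbb{Z}/p^{e_n}\mathbb{Z}$ with $a:=n-1\ge1$. I would then strip the exponent using Transfer Lemma \ref{transferLem}(3a): $\mho^2(G)$ is characteristic and $G/\mho^2(G)\cong(\mathbb{Z}/p\mathbb{Z})^a\times\mathbb{Z}/p^2\mathbb{Z}$, so it suffices to analyse the family $H:=(\mathbb{Z}/p\mathbb{Z})^a\times\mathbb{Z}/p^2\mathbb{Z}$.

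The core step is to show that $\lambda(H)\ge\frac12$ forces $(p,a)=(2,1)$, i.e.\ $H=\mathbb{Z}/2\mathbb{Z}\times\mathbb{Z}/4\mathbb{Z}$. Fixing an automorphism $\alpha$, I would write $\alpha=\sigma u$ with commuting $p'$-part $\sigma$ and $p$-part $u$. On the Frattini quotient $H/pH\cong\mathbb{F}_p^{a+1}$ the image $\bar\sigma$ is semisimple and preserves the image of the characteristic socle, an $a$-dimensional subspace; being reducible, $\mathrm{ord}(\sigma)=\mathrm{ord}(\bar\sigma)\le\mathrm{lcm}(p^a-1,p-1)=p^a-1$. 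Moreover $u^{\mathrm{ord}(\bar u)}=\mathrm{id}+\delta$ for a homomorphism $\delta\colon H\to pH$ that is trivial on $pH$, so $\delta^2=0=p\delta$ and therefore $\mathrm{ord}(u)\le p\cdot\mathrm{ord}(\bar u)$. Accounting for the eigenvalue multiplicities of $\bar\sigma$ on $\mathbb{F}_p^{a+1}$ should then give $\mathrm{ord}(\alpha)<\frac12 p^{a+2}$, hence $\lambda(\alpha)<\frac12$, in all but finitely many cases: a large unipotent $\bar u$ needs an eigenvalue of high multiplicity, and such an eigenvalue either has degree $\ge2$ (consuming dimension and shrinking $\mathrm{ord}(\sigma)$) or equals $1$ (contributing nothing to $\mathrm{ord}(\sigma)$). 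This tension between the semisimple and unipotent parts is exactly why only $p=2$ and small $a$ can survive, and why $\mathbb{Z}/2\mathbb{Z}\times\mathbb{Z}/4\mathbb{Z}$ is special: there the Sylow $2$-subgroup of $\mathrm{Aut}(H)$ has exponent $4$, producing a length-$4$ cycle and $\lambda=\frac12$. I expect this joint estimate, together with the direct verification of the handful of $p=2$ boundary cases — most notably $(\mathbb{Z}/2\mathbb{Z})^2\times\mathbb{Z}/4\mathbb{Z}$, where one must exclude an automorphism of order $8$ — to be the main obstacle of the whole proof.

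Finally, the core step forces $p=2$ and $a=1$, leaving $G\cong\mathbb{Z}/2\mathbb{Z}\times\mathbb{Z}/2^{e_n}\mathbb{Z}$, and it remains to pin down $e_n=2$. Here the $p'$-part is trivial (since $2^1-1=1$), so $\mathrm{mao}$ is a power of $2$; a direct computation gives $\lambda(\mathbb{Z}/2\mathbb{Z}\times\mathbb{Z}/8\mathbb{Z})=\frac14$, and because $\mathbb{Z}/2\mathbb{Z}\times\mathbb{Z}/2^{e-1}\mathbb{Z}$ is a characteristic quotient of $\mathbb{Z}/2\mathbb{Z}\times\mathbb{Z}/2^{e}\mathbb{Z}$, Transfer Lemma \ref{transferLem}(3a) makes $\lambda(\mathbb{Z}/2\mathbb{Z}\times\mathbb{Z}/2^{e}\mathbb{Z})$ non-increasing in $e$; it is thus $\le\frac14<\frac12$ for all $e\ge3$. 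Hence $e_n=2$ and $G\cong\mathbb{Z}/2\mathbb{Z}\times\mathbb{Z}/4\mathbb{Z}$, which completes the classification.
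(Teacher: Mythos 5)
Your overall skeleton is reasonable and the two reductions at either end are sound: the first step (lifting to the homocyclic group via Compatibility Lemma \ref{compLem}(2) and Proposition \ref{boundProp} to force $e_1=\cdots=e_{n-1}=1$) is exactly the paper's first step, and your subsequent passage to the characteristic quotient $G/\mho^2(G)\cong(\mathbb{Z}/p\mathbb{Z})^a\times\mathbb{Z}/p^2\mathbb{Z}$ is a genuinely nice simplification -- the paper instead keeps $e_n$ general for odd $p$ and handles $e_n\in\{3,4,\ldots\}$ for $p=2$ by separate compatibility and GAP arguments. The closing monotonicity argument pinning down $e_n=2$ is also fine. The reduction of $\lambda$ to $\mathrm{mao}$ via Horo\v{s}evski\u{\i} is legitimate for abelian groups.

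However, the core step is not actually proved, and the estimates you do write down are demonstrably insufficient. You establish $\mathrm{ord}(\sigma)\leq p^a-1$ and $\mathrm{ord}(u)\leq p\cdot\mathrm{ord}(\bar u)$, and then say that accounting for eigenvalue multiplicities ``should'' give $\mathrm{ord}(\alpha)<\frac12 p^{a+2}$. Test this on the smallest and most important case, $a=1$ with $p$ odd, i.e.\ $H=\mathbb{Z}/p\mathbb{Z}\times\mathbb{Z}/p^2\mathbb{Z}$: here $\bar u$ can be a single Jordan block of size $2$ for the eigenvalue $1$, so your bounds give only $\mathrm{ord}(\alpha)\leq(p-1)\cdot p\cdot p=(p-1)p^2$, which is \emph{larger} than $\frac12 p^3$. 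There is no ``tension with the semisimple part'' to exploit in this case, since $\mathrm{ord}(\bar\sigma)\leq p-1$ regardless. What one actually has to show is that $\mathrm{ord}(u)\leq p$ rather than $p^2$, i.e.\ that the Sylow $p$-subgroup of $\mathrm{Aut}(\mathbb{Z}/p\mathbb{Z}\times\mathbb{Z}/p^{e}\mathbb{Z})$ has smaller exponent than the generic upper bound suggests; this comes from the Hillar--Rhea divisibility constraint (the below-diagonal entry of the representing matrix is divisible by $p^{e-1}$), which your Frattini-quotient analysis never sees. This is precisely the content of the longest computation in the paper's proof (the explicit formulas for $\alpha^n$ on the two generators, the geometric series, and the formal-derivative identity showing every automorphism order divides $p^{e_2-1}(p-1)$). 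A similar issue recurs for $a\geq 2$: ruling out the configuration ``large irreducible semisimple part on the socle image together with $\mathrm{ord}(u)=p\cdot\mathrm{ord}(\bar u)$'' needs an argument, and the paper supplies one of a different flavour there too (elements on a single automorphism cycle all have the same order, and the largest equal-order stratum of $G$ is too small). Until you replace ``should then give'' and ``I expect'' with an actual proof that $\mathrm{mao}((\mathbb{Z}/p\mathbb{Z})^a\times\mathbb{Z}/p^2\mathbb{Z})<\frac12 p^{a+2}$ for all $(p,a)\neq(2,1)$, the proof has a hole exactly where the real difficulty lies.
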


\begin{proof}
Write $G=\prod_{i=1}^n{\mathbb{Z}/p^{e_i}\mathbb{Z}}$ with $(e_1,\ldots,e_n)$ nondecreasing and fix an automorphism $\tilde{\alpha}$ of $G$ such that $\lambda(\tilde{\alpha})\geq\frac{1}{2}$. We will show under the assumption $n,e_n\geq 2$ that $p=2$ and $G$ is isomorphic to $\mathbb{Z}/2\mathbb{Z}\times\mathbb{Z}/4\mathbb{Z}$.

First, assume that $e_{n-1}>1$. Then since $(e_1,\ldots,e_n)$ is upward compatible with $(e_n,\ldots,e_n)$, by Compatibility Lemma \ref{compLem}(2), $\tilde{\alpha}$ is induced by some automorphism $\alpha$ of $(\mathbb{Z}/p^{e_n}\mathbb{Z})^n$. By Proposition \ref{boundProp}, the order of $\alpha$, and hence also the order of $\tilde{\alpha}$, is bounded by $p^{e_n-1}(p^n-1)$. However, \[p^{e_n-1}(p^n-1)<p^{e_n-1}\cdot p^n=\frac{1}{p}p^{e_n}\cdot p^n\leq \frac{1}{p}p^{e_n}p^{e_1+\cdots+e_{n-1}}=\frac{1}{p}|G|,\] a contradiction. Hence $G$ is of the form $(\mathbb{Z}/p\mathbb{Z})^{n-1}\times\mathbb{Z}/p^{e_n}\mathbb{Z}$ for some $e_n\geq 2$.

Our next goal is to exclude the case $n\geq 3$. First, assume that $p$ is odd. Just as before, we can consider an automorphism $\alpha$ of $(\mathbb{Z}/p^{e_n}\mathbb{Z})^n$ inducing $\tilde{\alpha}$ on the quotient $G$. Now the order of $\alpha$ is bounded below by $\frac{1}{2}p^n\cdot p^{e_n-1}$, and following the proof of Proposition \ref{boundProp}, we find that defining $\alpha_i$ to be the restriction of $\alpha$ to $\mho^i((\mathbb{Z}/p^{e_n}\mathbb{Z})^n)$ for $i=0,\ldots,e_n-1$, for each $i=0,\ldots,e_n-2$, either $\mathrm{ord}(\alpha_i)=p\cdot\mathrm{ord}(\alpha_{i+1})$ or $\mathrm{ord}(\alpha_i)=\mathrm{ord}(\alpha_{i+1})$. However, if the second equality holds just once, then $\mathrm{ord}(\alpha_{e_n-1})\geq\frac{1}{2}p^n\cdot p>p^n$, a contradiction. Hence the order decreases by the factor $p$ in each step and $\mathrm{ord}(\alpha_{e_n-1})=\frac{1}{p^{e_n-1}}\mathrm{ord}(\alpha)\geq\frac{1}{2}p^n$, whence $\alpha_{e_n-1}$ is an automorphism of $\mho^{e_n-1}((\mathbb{Z}/p^{e_n}\mathbb{Z})^n)\cong(\mathbb{Z}/p\mathbb{Z})^n$ of $\lambda$-value at least $\frac{1}{2}$. By Theorem \ref{elAbTheo} and the assumptions $n\geq 3$ and $p>2$, this implies that $\mathrm{ord}(\alpha_{e_n-1})=p^n-1$, and hence $\mathrm{ord}(\alpha)=p^{e_n-1}(p^n-1)$. Therefore, by \cite[Corollary 1]{Hor74a}, $\alpha$ has a largest cycle of length $p^{e_n-1}(p^n-1)$, and that cycle induces a largest cycle of $\tilde{\alpha}$ of length a divisor of that number. But the largest cycle of $\tilde{\alpha}$ has length bounded below by $\frac{1}{2}p^{n-1}p^{e_n}>\frac{1}{2}p^{e_n-1}(p^n-1)$, whence its length must equal $p^{e_n-1}(p^n-1)$. However, elements on an automorphism cycle are all of the same order, and it is not difficult to see that the largest subset of $G$ consisting of elements of a fixed order is the subset of elements of order $p^{e_n}$ and only has size $p^{n-1}\cdot\phi(p^{e_n})=p^{n-1}p^{e_n-1}(p-1)=p^{e_n-1}(p^n-p^{n-1})<p^{e_n-1}(p^n-1)$, a contradiction. This concludes the argument that $n\geq 3$ is not possible for odd $p$.

We will now refute the assumption $n\geq 3$ for $p=2$ as well. First, note that in the case $p=2$, we are generally in a more convenient position, as we can immediately refute the case $e_n\geq 4$, since in that case, $(1,\ldots,1,e_n)$ is downward compatible with $(0,\ldots,0,3)$, so that by Compatibility Lemma \ref{compLem}(1) and Transfer Lemma \ref{transferLem}(3,a), the assumption $\lambda(G)\geq\frac{1}{2}$ implies $\lambda(\mathbb{Z}/8\mathbb{Z})\geq\frac{1}{2}$, a contradiction. We can also refute the case $n\geq 4$, as follows: The largest number of elements of the same order in $G$ is the one of elements of order $2^{e_n}$, of which there are precisely $2^{n-1}\cdot 2^{e_n-1}=\frac{1}{2}|G|$. Hence $\Lambda(\tilde{\alpha})=2^{n-1}2^{e_n-1}$. Let $\alpha$ be an automorphism of $(\mathbb{Z}/2^{e_n}\mathbb{Z})^n$ inducing $\tilde{\alpha}$. Then the order of $\alpha$ is bounded below by $2^{n-1}\cdot 2^{e_n-1}$, and applying the same argument of descending through the successive agemo subgroups as before, we find that the order of the restriction $\overline{\alpha}$ of $\alpha$ to the the $(e_n-1)$-th agemo subgroup is bounded below by $2^{n-1}$, whence by Corollary \ref{properCor}, its actual order is of the form $\kappa\cdot 2^{n-1}$ for some $\kappa\in\left(1,2\right)$, and the order of $\alpha$ equals $\kappa\cdot 2^{n-1}\cdot 2^{e_n-1}$, contradicting the fact that it is an integer multiple of the order of $\tilde{\alpha}$, which by \cite[Corollary 1]{Hor74a} equals $\Lambda(\tilde{\alpha})$. Hence all that is left to refute the case $n\geq 3$ for $p=2$ as a whole is to refute the two groups $(\mathbb{Z}/2\mathbb{Z})^2\times\mathbb{Z}/4\mathbb{Z}$ and $(\mathbb{Z}/2\mathbb{Z})^2\times\mathbb{Z}/8\mathbb{Z}$, which can easily be done with GAP \cite{GAP4}: The first group has $\lambda$-value $\frac{3}{8}$, and the second $\frac{3}{16}$.

We are hence left with the case $n=2$, i.e., $G\cong\mathbb{Z}/p\mathbb{Z}\times\mathbb{Z}/p^{e_2}\mathbb{Z}$ with $e_2\geq 2$. It is then not difficult to show that $p=2$ implies $G\cong\mathbb{Z}/2\mathbb{Z}\times\mathbb{Z}/4\mathbb{Z}$, since, as noted before, $e_2\leq 3$, and the group $\mathbb{Z}/2\mathbb{Z}\times\mathbb{Z}/8\mathbb{Z}$ is refuted with GAP \cite{GAP4} again: $\lambda(\mathbb{Z}/2\mathbb{Z}\times\mathbb{Z}/8\mathbb{Z})=\frac{1}{4}$.

So all that is left is to exclude the case $n=2$ for odd $p$. We will do this \enquote{by hand}. Let $\pi:\mathbb{Z}\rightarrow\mathbb{Z}/p\mathbb{Z}$ and $\pi':\mathbb{Z}\rightarrow\mathbb{Z}/p^{e_2}\mathbb{Z}$ denote the canonical projections. By Theorem \ref{hillarRheaTheo}, $\alpha$ is represented by a matrix $A\in\mathrm{Mat}_2(\mathbb{Z})$ such that the coefficient in the bottom left corner is divisible by $p^{e_2-1}$, say \[A=\begin{pmatrix}a & b \\ p^{e_2-1}c & d\end{pmatrix}\] for $a,b,c,d\in\mathbb{Z}$. Since necessarily $p\nmid\mathrm{det}(A)$, we can conclude that $p\nmid a$ and $p\nmid d$. We will show that all cycle lengths of $\alpha$ divide the number $n_0:=p^{e_2-1}(p-1)$ and hence are too small for $\lambda(\alpha)$ to be in $\left[\frac{1}{2},1\right]$. Since for any group $H$, any automorphism $\beta$ of $H$ and all elements $x,y\in H$, the length of the cycle of $xy$ under $\beta$ divides the least common multiple of the cycle lengths of $x$ and $y$ under $\beta$, it suffices to show that the cycle length of each of the generators $(\pi(1),\pi'(0))$ and $(\pi(0),\pi'(1))$ under $\alpha$ divides $n_0$. For this, we use the following formulas, which are easy to show by induction on $n\in\mathbb{N}$: \begin{equation}\label{form1}\alpha^n(\pi(1),\pi'(0))=(\pi(a)^n,p^{e_2-1}\pi'(c\sum_{k=0}^{n-1}{a^{n-1-k}d^k})),\end{equation}\begin{equation}\label{form2}\alpha^n(\pi(0),\pi'(1))=(\pi(b\sum_{k=0}^{n-1}{a^{n-1-k}d^k}),\pi'(d)^n+p^{e_2-1}\pi'(bc\sum_{l=1}^{n-1}{la^{n-1-l}d^{l-1}})).\end{equation}

We distinguish two cases. First, assume that $a\equiv d\hspace{3pt}(\mathrm{mod}\hspace{3pt}p)$. Then by (\ref{form1}), we have \[\alpha^{n_0}(\pi(1),\pi'(0))=(\pi(a)^{n_0},p^{e_2-1}\pi'(c\cdot n_0a^{n_0-1}))=(\pi(1),\pi'(0)),\] and by (\ref{form2}), we have \[\alpha^{n_0}(\pi(0),\pi'(1))=(\pi(bn_0a^{n_0-1}),\pi'(d)^{n_0}+p^{e_2-1}\pi'(bc\cdot a^{n_0-2}\cdot\frac{1}{2}(n_0-1)n_0))=(\pi(0),\pi'(1)).\]

So from now on, we may assume that $a\not\equiv d\hspace{3pt}(\mathrm{mod}\hspace{3pt}p)$. Then we can write \[\pi(\sum_{k=0}^{n_0-1}{a^{n_0-1-k}d^k})= \pi(a)^{n_0-1}\sum_{k=0}^{n_0-1}{(\frac{\pi(d)}{\pi(a)})^k}=\pi(a)^{n_0-1}\cdot\frac{(\frac{\pi(d)}{\pi(a)})^{n_0}-1}{\frac{\pi(d)}{\pi(a)}-1}=\pi(0),\] from which it follows immediately that $\alpha^{n_0}(\pi(1),\pi'(0))=(\pi(1),\pi'(0))$ and that the first entry of $\alpha^{n_0}(\pi(0),\pi'(1))$ equals $\pi(0)$. As for its second entry, we note that it can be written as \[\pi'(1)+p^{e_2-1}\pi'(bca^{n_0-2})\sum_{l=1}^{n_0-1}{l(\frac{\pi'(d)}{\pi'(a)})^{l-1}}.\] In order to show that this is equal to $\pi'(1)$, we argue that the sum \[\sum_{l=1}^{n_0-1}{l(\frac{\pi'(d)}{\pi'(a)})^{l-1}}\in\mathbb{Z}/p^{e_2}\mathbb{Z}\] is not a unit in the ring $\mathbb{Z}/p^{e_2}\mathbb{Z}$. However, since the canonical projection $\pi'':\mathbb{Z}/p^{e_2}\mathbb{Z}\rightarrow\mathbb{Z}/p\mathbb{Z}$ satisfies $\pi=\pi''\circ\pi'$ and sends units to units, this follows immediately from the following, working with the formal derivative in the function field $(\mathbb{Z}/p\mathbb{Z})(X)$: \[\sum_{l=1}^{n_0-1}{l(\frac{\pi(d)}{\pi(a)})^{l-1}}=(\frac{\mathrm{d}}{\mathrm{d}X}{\frac{X^{n_0}-1}{X-1}})(\frac{\pi(d)}{\pi(a)})=(\frac{-X^{n_0}+1}{(X-1)^2})(\frac{\pi(d)}{\pi(a)})=\pi(0).\]
\end{proof}

Note that now, we could deduce Corollary \ref{strictCor} from Theorem \ref{abelianTheo}:

\begin{corrollary}\label{abStrictCor}
The list given in Corollary \ref{strictCor} is the complete list of \textbf{abelian} periodic FDGs with $\lambda$-value greater than $\frac{1}{2}$, together with their $\lambda$-values and the information whether they are $\lambda$-maximal or not.
\end{corrollary}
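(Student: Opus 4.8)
The plan is to reduce the whole statement to the level of Sylow subgroups and then feed the result into the $p$-group classifications already in hand. Given an abelian periodic FDG $(G,\alpha)$ with $\lambda(\alpha)>\frac{1}{2}$, I would first write $G=\prod_p{G_p}$ as the product of its Sylow subgroups; since these have pairwise coprime orders, the family $(G_p)_p$ has the splitting property, so $\alpha$ splits as $\alpha=\prod_p{\alpha_p}$. Applying Product Lemma \ref{productLem}(2,ii), the hypothesis $\lambda(\alpha)>\frac{1}{2}$ forces the $\Lambda(\alpha_p)$ to be pairwise coprime and $\lambda(\alpha)=\prod_p{\lambda(\alpha_p)}$, since otherwise $\lambda(\alpha)\le\frac{1}{2}$. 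As each factor is at most $1$, this also yields $\lambda(\alpha_p)\ge\lambda(\alpha)>\frac{1}{2}$ for every $p$ with $G_p$ nontrivial. The converse direction (each listed FDG genuinely has the claimed $\lambda$-value exceeding $\frac{1}{2}$) will come from the \enquote{conversely} parts of Theorem \ref{elAbTheo}, Example \ref{lambdaEx}, and the product formula of Product Lemma \ref{productLem}.

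Next I would pin down the individual Sylow FDGs. For each nontrivial $G_p$ we have $\lambda(G_p)>\frac{1}{2}$, so Theorem \ref{lccAbTheo} leaves only three options for $G_p$: elementary abelian, primary cyclic, or $\mathbb{Z}/2\mathbb{Z}\times\mathbb{Z}/4\mathbb{Z}$; the last is discarded because $\lambda(\mathbb{Z}/2\mathbb{Z}\times\mathbb{Z}/4\mathbb{Z})=\frac{1}{2}$ (its largest set of equal-order elements has size $\frac{1}{2}|G_p|$, and automorphism cycles are homogeneous in order, exactly the equal-order argument from the proof of Theorem \ref{lccAbTheo}), whereas we need $\lambda(\alpha_p)>\frac{1}{2}$. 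The primary cyclic factors are handled by Example \ref{lambdaEx}(1) (forcing odd $p$ and multiplication by a primitive root, i.e.\ $\mathcal{M}(p^m,a)$, type (2)), and the elementary abelian ones by Theorem \ref{elAbTheo}, yielding precisely the blocks $\mathcal{V}(P(X))$ of types (1),(3) and the exceptional $\mathcal{V}((X-g)^2)$ of type (4).

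A crucial observation at this stage is that for every odd prime $p$ the value $\Lambda(\alpha_p)$ is even (it equals $p^{m-1}(p-1)$, $p^n-1$, or $p(p-1)$ in the three surviving cases), so pairwise coprimality of the $\Lambda(\alpha_p)$ prevents two distinct odd primes from both contributing, while an odd factor can coexist with the $2$-part since $\Lambda(\alpha_2)=\prod_i{(2^{d_i}-1)}$ is odd. Hence $G$ is a $2$-group (type (1)), a single odd $p$-group (types (2)--(4)), or a product of a type-(1) factor with one odd factor having coprime $\Lambda$-values (type (5)), reproducing the list. The $\lambda$-values then fall out of $\lambda(\alpha)=\prod_p{\lambda(\alpha_p)}$ together with the per-factor values of Example \ref{lambdaEx} and Theorem \ref{elAbTheo}. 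For $\lambda$-maximality in the pure cases I would compare $\lambda(\alpha)$ with $\lambda(G)$: types (2),(3) already attain the group's $\lambda$-value; type (4) has $\lambda(\alpha)=1-\frac{1}{p}<1-\frac{1}{p^2}=\lambda((\mathbb{Z}/p\mathbb{Z})^2)$, so is not maximal; and type (1) is $\lambda$-maximal exactly when $\prod_i{(1-\frac{1}{2^{d_i}})}=1-\frac{1}{2^{d_1+\cdots+d_r}}$, which forces $r=1$.

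The main obstacle I expect is the $\lambda$-maximality criterion for the mixed type (5), where one must compute $\Lambda(G)$ for $G_2\times G_p$ as the maximum of $\Lambda(\beta_2)\Lambda(\beta_p)$ over \emph{all} coprime pairs of automorphisms, not merely those satisfying the $\frac{1}{2}$-LCC. The key point is that coprimality with the even odd-part order forces $\beta_2$ to have odd order, which by Proposition \ref{orderProp} and Corollary \ref{herCor} means $\beta_2$ is a product of distinct primitive irreducible factors, parametrized by a pairwise-coprime additive decomposition $d'_1+\cdots+d'_s$ of $d_1+\cdots+d_r$ with $\Lambda$-value $\prod_j{(2^{d'_j}-1)}$; meanwhile the admissible odd-part $\Lambda$-values are precisely the set $\mathrm{L}(G_p)$, the subtlety being the two choices $p^2-1,p^2-p$ available when $G_p\cong(\mathbb{Z}/p\mathbb{Z})^2$. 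Maximizing $\prod_j{(2^{d'_j}-1)}\cdot L$ over these data subject to the coprimality constraints, and comparing with $\Lambda(\alpha)=\prod_i{(2^{d_i}-1)}\cdot\Lambda(\alpha_p)$, then gives exactly the stated criterion.
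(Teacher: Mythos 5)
Your proposal is correct and follows essentially the same route as the paper's proof: split $(G,\alpha)$ into its Sylow FDGs via the Product Lemma \ref{productLem}, classify each factor by Theorem \ref{lccAbTheo} and Theorem \ref{elAbTheo} (discarding $\mathbb{Z}/2\mathbb{Z}\times\mathbb{Z}/4\mathbb{Z}$ since its $\lambda$-value is only $\frac{1}{2}$), and use the evenness of $\Lambda(\alpha_p)$ for odd $p$ to conclude that at most one odd prime divides $|G|$. The paper dispatches the $\lambda$-maximality assertions, including the mixed case (5), with a brief appeal to the Product Lemma, whereas you spell out the underlying computation of $\Lambda(G_2\times G_p)$; that extra detail is consistent with, and a reasonable expansion of, the paper's terser argument.
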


\begin{proof}
If $(G,\alpha)$ is an abelian periodic FDG with $\lambda$-value greater than $\frac{1}{2}$, then by Product Lemma \ref{productLem}(2,iii), the Sylow FDGs $(G_p,\alpha_p)$ of $(G,\alpha)$ also have $\lambda$-value greater than $\frac{1}{2}$, whence by Theorem \ref{lccAbTheo}, they each are either elementary abelian or primary cyclic. In the latter case, which can only occur for odd $p$, by cyclicity of the automorphism group of primary cylic groups of odd order, it is clear that $\alpha_p$ must be the multiplication by a primitive root modulo $p^k$, whence in particular $2\mid\Lambda(\alpha_p)$, and the first case, for odd $p$, $(G_p,\alpha_p)$ is, up to isomorphism, as in points 1 and 2 of the list from Theorem \ref{elAbTheo}, whence also $2\mid\Lambda(\alpha_p)$. It follows by Product Lemma \ref{productLem}(2,ii) that $|G|$ has at most one odd prime divisor. So precisely one of the following cases occurs:

(i) $G$ is a (possibly trivial) elementary abelian $2$-group. Then by Theorem \ref{elAbTheo}, $(G,\alpha)$ is isomorphic to an FDG as in point 1 of the list. The assertion on $\lambda$-maximality is easy to check.

(ii) $G$ is a nontrivial primary cyclic $p$-group for an odd prime $p$. Then $(G,\alpha)$ is, up to isomorphism, as in point 2. Again, the assertion on $\lambda$-maximality is clear.

(iii) $G$ is a nontrivial elementary abelian $p$-group for an odd prime $p$. Then by Theorem \ref{elAbTheo}, $(G,\alpha)$ is isomorphic to an FDG from points 3 and 4 of the list, and in both cases, the assertions on $\lambda$-maximality are clear.

(iv) $|G|=2^kp^m$ for some odd prime $p$ and $k,m\in\mathbb{N}^+$. Then $G$ is, up to isomorphism, as in point 5 of the list, with the \enquote{compatibility restrictions} on the two factors as well as the assertion on $\lambda$-maximality following from Product Lemma \ref{productLem}(2,ii and iii).
\end{proof}

\subsection{Classification of periodic abelian gFDGs \texorpdfstring{$(G,A)$}{(G,A)} with \texorpdfstring{$\lambda(A)=1$}{lambda(A)=1}}\label{subsec2P6}

For proving Theorem \ref{equalTheo}, we will need this classification result at one point. Just as in the last subsection, in view of Product Lemma \ref{productLem}(2,iv), the main effort lies in analyzing the $p$-group level. We first give a classification of the finite abelian $p$-groups $G$ with $\lambda_{\mathrm{aff}}(G)=1$:

\begin{theoremm}\label{affineTheo}
Let $G$ be a finite abelian $p$-group for some prime $p$. The following are equivalent:

(1) $\lambda_{\mathrm{aff}}(G)=1$.

(2) $G\cong(\mathbb{Z}/2\mathbb{Z})^2$ or $G\cong\mathbb{Z}/p^n\mathbb{Z}$ for some $n\in\mathbb{N}$.
\end{theoremm}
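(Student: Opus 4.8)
The plan is to prove the two implications separately: the direction $(2)\Rightarrow(1)$ by explicit construction, and the direction $(1)\Rightarrow(2)$ by first bounding the rank of $G$ and then analyzing the rank-$2$ $2$-group case by transfer.

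For $(2)\Rightarrow(1)$: if $G=\mathbb{Z}/p^n\mathbb{Z}$, the translation $\mathrm{A}_{1,\mathrm{id}}$ is a single $p^n$-cycle, so $\lambda_{\mathrm{aff}}(G)=1$; if $G=(\mathbb{Z}/2\mathbb{Z})^2$, one notes that $\mathrm{Hol}(G)=G\rtimes\mathrm{GL}_2(\mathbb{F}_2)\le\mathcal{S}_G$ has order $24$ and acts faithfully on the four points of $G$, so it is all of $\mathcal{S}_G\cong\mathcal{S}_4$ and in particular contains a $4$-cycle lying in $\mathrm{Aff}(G)$ (concretely $\mathrm{A}_{v_1,\alpha}$ with $\alpha=\mathrm{M}((X-1)^2)$ works).

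For $(1)\Rightarrow(2)$, suppose $A=\mathrm{A}_{g_0,\alpha}$ is a single cycle of length $|G|=p^N$ (with $N=\sum_i e_i$); since $\mathrm{ord}(A)=p^N$ and $\mathrm{ord}(\alpha)\mid\mathrm{ord}(A)$ under the holomorph isomorphism, $\alpha$ is a $p$-element. First I would bound the rank $n$ of $G$: the Frattini quotient $G/\mho^1(G)\cong(\mathbb{Z}/p\mathbb{Z})^n$ is characteristic, so Affine Quotient Transfer (Transfer Lemma \ref{transferLem}(3,b)) gives $\lambda_{\mathrm{aff}}((\mathbb{Z}/p\mathbb{Z})^n)=1$, i.e.\ $(\mathbb{Z}/p\mathbb{Z})^n$ has a full affine cycle whose linear part is a unipotent element of $\mathrm{GL}_n(\mathbb{F}_p)$. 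By Lemma \ref{affineLem}(1) a full cycle has order $\mathrm{ord}(f)\cdot\mathrm{ord}(\alpha)$ with $\mathrm{ord}(f)\le\mathrm{meo}((\mathbb{Z}/p\mathbb{Z})^n)=p$ and $\mathrm{ord}(\alpha)\le p^{\lceil\log_p n\rceil}$ (the maximal order of a unipotent), so $p^n\le p^{1+\lceil\log_p n\rceil}$, forcing $n\le 2$ if $p$ is odd and $n\le 3$ if $p=2$. The borderline cases ($(\mathbb{Z}/p\mathbb{Z})^2$ for odd $p$, and $(\mathbb{Z}/2\mathbb{Z})^3$) are then excluded by the sharper observation that for the relevant unipotent the operator $\Sigma=\sum_{k=0}^{\mathrm{ord}(\alpha)-1}\alpha^k$ vanishes: for odd $p$, $\alpha=I+N$ with $N^2=0$ gives $\Sigma=pI+\binom{p}{2}N=0$ in characteristic $p$, while for the size-$3$ Jordan block over $\mathbb{F}_2$ one computes $\Sigma=\sum_{k=0}^3(I+kN+\binom{k}{2}N^2)=0$; in either case $f=\Sigma g_0=0$ and $\mathrm{ord}(A)=\mathrm{ord}(\alpha)<p^n$. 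This leaves $n=1$ (so $G$ is cyclic, and we are done) or $p=2$ and $n=2$.

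It remains to treat $G=\mathbb{Z}/2^{e_1}\mathbb{Z}\times\mathbb{Z}/2^{e_2}\mathbb{Z}$ with $1\le e_1\le e_2$ and $(e_1,e_2)\ne(1,1)$, where I would show no full cycle exists. The socle $N:=\Omega_1(G)\cong(\mathbb{Z}/2\mathbb{Z})^2$ is characteristic with $|N|=4$ and $|G/N|=2^{e_1+e_2-2}$. Applying Transfer Lemma \ref{transferLem}(2) to $A$ and writing $\Lambda(A)=L\cdot l$ with $L\le|G/N|$ the induced cycle length in $G/N$ and $l\le|N|=4$ the length of the cycle of $1_N$ under $\mathrm{A}_{n_0,\alpha^L_{\mid N}}$, the equality $L\cdot l=|G|=|G/N|\cdot|N|$ forces $L=|G/N|$ and $l=4$. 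But $\alpha$ is a $2$-element, so $\alpha_{\mid N}$ is a $2$-element of $\mathrm{GL}_2(\mathbb{F}_2)\cong\mathcal{S}_3$ and hence $\alpha^2_{\mid N}=\mathrm{id}$; since $L=2^{e_1+e_2-2}$ is even, $\alpha^L_{\mid N}=\mathrm{id}$, so $\mathrm{A}_{n_0,\alpha^L_{\mid N}}$ is a translation of $(\mathbb{Z}/2\mathbb{Z})^2$ with all cycles of length $\le 2$, contradicting $l=4$. Therefore $(e_1,e_2)=(1,1)$ and $G\cong(\mathbb{Z}/2\mathbb{Z})^2$.

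The hardest part will be this last step: the crude bound $\mathrm{meo}(\mathrm{Aff}(G))\le\mathrm{meo}(G)\cdot\mathrm{mao}(G)$ of Lemma \ref{affineLem}(2) is too weak (it fails already for $\mathbb{Z}/2\mathbb{Z}\times\mathbb{Z}/4\mathbb{Z}$), and the family $\mathbb{Z}/2\mathbb{Z}\times\mathbb{Z}/2^{e_2}\mathbb{Z}$ is infinite, so one cannot simply appeal to a finite machine check. The transfer-through-the-socle device, combined with the key observation that a $2$-element automorphism acts on $\Omega_1(G)$ with order at most $2$, is what yields a single uniform contradiction across all exponents.
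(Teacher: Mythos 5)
Your proof is correct. The opening moves coincide with the paper's: both directions of $(2)\Rightarrow(1)$ are the same constructions, and your reduction to the elementary abelian quotient (your Frattini quotient $G/\mho^1(G)$ is exactly the quotient the paper reaches via downward compatibility with $(1,\ldots,1)$, only you bypass the Compatibility Lemma \ref{compLem} because that subgroup is obviously characteristic), followed by the rank bound from $\mathrm{ord}(A)=\mathrm{ord}(f)\cdot\mathrm{ord}(\alpha)$ and the maximal order of a unipotent, is the same inequality the paper extracts from Lemma \ref{affineLem}(1) and Proposition \ref{orderProp}. Where you genuinely diverge is in finishing. First, you dispose of the borderline elementary abelian cases $(\mathbb{Z}/p\mathbb{Z})^2$ ($p$ odd) and $(\mathbb{Z}/2\mathbb{Z})^3$ uniformly by showing $\sum_{k=0}^{\mathrm{ord}(\alpha)-1}\alpha^k=0$, hence $f=0$; for odd $p$ this is the paper's triangle-number computation in disguise ($\Delta_{p-1}=\binom{p}{2}\equiv 0\bmod p$), but your packaging also covers $(\mathbb{Z}/2\mathbb{Z})^3$, which the paper checks with GAP. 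Second, and more substantially, you handle \emph{all} rank-two abelian $2$-groups at once by transferring through the socle $\Omega_1(G)$ via Transfer Lemma \ref{transferLem}(2) and observing that a $2$-element of $\mathrm{GL}_2(\mathbb{F}_2)$ squares to the identity, so the inner factor $l$ is at most $2<4$; the paper instead reduces via compatibility to the two groups $\mathbb{Z}/2\mathbb{Z}\times\mathbb{Z}/4\mathbb{Z}$ and $(\mathbb{Z}/4\mathbb{Z})^2$ and verifies those with GAP. Your route buys a fully computer-free proof with a single uniform argument for the infinite family $\mathbb{Z}/2^{e_1}\mathbb{Z}\times\mathbb{Z}/2^{e_2}\mathbb{Z}$ (and your closing remark is right that the crude bound of Lemma \ref{affineLem}(2) cannot do this); the paper's route buys brevity by letting the Compatibility Lemma shrink everything to finitely many machine-checkable cases.
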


\begin{proof}[Proof of Theorem \ref{affineTheo}]
For \enquote{(2)$\Rightarrow$(1)}: It is easy to check that if $\alpha$ is an automorphism of order $2$ of $(\mathbb{Z}/2\mathbb{Z})^2$ and $v\in(\mathbb{Z}/2\mathbb{Z})^2$ is not a fixed point of $\alpha$, then $\lambda(\mathrm{A}_{v,\alpha})=1$. For the cyclic groups $G$, just consider the map $G\rightarrow G$, $g\mapsto g_0g$, for any generator $g_0$ of $G$.

For \enquote{(1)$\Rightarrow$(2)}: The idea is to use a downward compatibility argument to reduce to the study of a few special cases. More precisely, write $G=\prod_{i=1}^n{\mathbb{Z}/p^{e_i}\mathbb{Z}}$ with $(e_1,\ldots,e_n)$ nondecreasing. $G$ is downward compatible with the elementary abelian group $(\mathbb{Z}/p\mathbb{Z})^n$, and if $n=2$ and $e_2>1$, then $G$ is either downward compatible with $\mathbb{Z}/p\mathbb{Z}\times\mathbb{Z}/p^2\mathbb{Z}$ (if $e_1<e_2$) or with $(\mathbb{Z}/p^2\mathbb{Z})^2$ (if $e_1=e_2$). Hence by Compatibility Lemma \ref{compLem}(1) and Transfer Lemma \ref{transferLem}(3,b), it is sufficient to show the following three assertions:

(A) For odd $p$ and $n\geq 2$, $\lambda_{\mathrm{aff}}((\mathbb{Z}/p\mathbb{Z})^n)<1$.

(B) For $n\geq 3$, $\lambda_{\mathrm{aff}}((\mathbb{Z}/2\mathbb{Z})^n)<1$.

(C) $\mathrm{max}(\lambda_{\mathrm{aff}}(\mathbb{Z}/2\mathbb{Z}\times\mathbb{Z}/4\mathbb{Z}),\lambda_{\mathrm{aff}}((\mathbb{Z}/4\mathbb{Z})^2))<1$.

Assertion (C) (more precisely, $\lambda_{\mathrm{aff}}(\mathbb{Z}/2\mathbb{Z}\times\mathbb{Z}/4\mathbb{Z})=\lambda_{\mathrm{aff}}((\mathbb{Z}/4\mathbb{Z})^2)=\frac{1}{2}$) is readily verified using GAP \cite{GAP4}. As for assertions (A) and (B), let $p$ be any prime and assume that $n\geq 2$. Note that by Lemma \ref{affineLem}(1), if $A=\mathrm{A}_{v,\alpha}$ is a periodic affine map of $(\mathbb{Z}/p\mathbb{Z})^n$ of $\lambda$-value $1$, then $\alpha$ must have a nontrivial fixed point and $\mathrm{ord}(\alpha)=p^{n-1}$. In particular, the order of one of the primary Frobenius blocks of $\alpha$ equals $p^{n-1}$. Fix one such block, which by Proposition \ref{orderProp} must be the companion matrix of $(X-1)^k$ for some $k\in\mathbb{N}^+$ such that $\lceil\mathrm{log}_p(k)\rceil=n$. Since $\lceil\mathrm{log}_p(n-1)\rceil<n-1$ for all $n\geq 2$, we must have $k=n$ (in particular, $\alpha$ has precisely one primary Frobenius block), and because $\lceil\mathrm{log}_p(n)\rceil<n-1$ holds for all $p$ for $n\geq 4$ and for odd $p$ for $n\geq 3$, we can refute the case $n\geq 3$ for all $p$ except possibly for the subcase $p=2,n=3$. However, that subcase is readily refuted with GAP \cite{GAP4}: $\lambda_{\mathrm{aff}}((\mathbb{Z}/2\mathbb{Z})^3)=\frac{7}{8}$. This proves assertion (B), and for proving assertion (A), it remains to refute the case $n=2$ for odd $p$, which can be done as follows: We already know that, with respect to an appropriate $\mathbb{F}_p$-basis of $(\mathbb{Z}/p\mathbb{Z})^2$, $\alpha$ is represented by $\mathrm{M}((X-1)^2)$, which has characteristic polynomial $(X-1)^2$, so $\alpha$ can be represented, with respect to some other basis $\mathcal{B}$, by a matrix of the form \[\begin{pmatrix}1 & a \\ 0 & 1\end{pmatrix}\] for an element $a\in\mathbb{F}_p^{\ast}$. Replacing $A$ by an appropriate power, we may even assume w.l.o.g.~that $a=1$. From now on, all elements of $(\mathbb{Z}/p\mathbb{Z})^2$ are represented by their coordinate vectors with respect to $\mathcal{B}$. If $v=(x,y)^t$, we find by an easy induction on $n\geq 0$ that the image of $(0,0)^t$ under $A^n$ equals $(nx+\Delta_{n-1}y,ny)^t$, where $\Delta_k=\frac{1}{2}k(k+1)$ is the $k$-th triangle number. In particular, the cycle returns to the zero element after $p$ iterations of $A$, although it should only return after $p^2$ iterations, a contradiction.
\end{proof}

It is now not difficult to even classify the periodic abelian gFDGs $(G,A)$ such that $\lambda(A)=1$:

\begin{corrollary}\label{affineCor}
Let $G$ be a finite abelian group, $A=\mathrm{A}_{g_0,\alpha}$ a periodic affine map of $G$. The following are equivalent:

(1) $\lambda(A)=1$.

(2) Up to isomorphism of gFDGs, one of the following holds:

(a) $G=\prod_{p\mid |G|}{\mathbb{Z}/p^{k_p}\mathbb{Z}}$ is cyclic, $g_0$ is a generator of $G$, and, splitting $\alpha=\prod_{p\mid |G|}{\alpha_p}$ over the Sylow subgroups $\mathbb{Z}/p^{k_p}\mathbb{Z}$ of $G$, $\alpha_p$ is the multiplication by a factor $a_p$ such that $a_p\equiv 1\hspace{3pt}(\mathrm{mod}\hspace{3pt}p)$ for all prime divisors $p$ of $|G|$, and such that, if $k_2\geq 2$, then $a_2\equiv 1\hspace{3pt}(\mathrm{mod}\hspace{3pt}4)$.

(b) $G=(\mathbb{Z}/2\mathbb{Z})^2\times\mathbb{Z}/o\mathbb{Z}$ for some odd $o\geq 1$, and, splitting $g_0=(g_2,g_o)$ and $\alpha=\alpha_2\times\alpha_o$ over that product, $\mathrm{A}_{g_o,\alpha_o}$ is an affine map of $\mathbb{Z}/o\mathbb{Z}$ as described in (a), and $\alpha_2$ is an automorphism of $(\mathbb{Z}/2\mathbb{Z})^2$ of order $2$ such that $g_2\notin\mathrm{fix}(\alpha_2)$.
\end{corrollary}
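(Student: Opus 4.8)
The plan is to localize the condition $\lambda(A)=1$ to the Sylow components of $G$, invoke Theorem \ref{affineTheo} to restrict the possible component groups to just two types, and then carry out an explicit order computation with Lemma \ref{affineLem}(1) in each of these two cases.

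First I would split $G=\prod_p G_p$ over its Sylow subgroups. Since the $G_p$ have pairwise coprime orders, $(G_p)_p$ has the splitting property, so by Product Lemma \ref{productLem}(2,i) the periodic affine map decomposes as $A=\prod_p A_p$ with $A_p=\mathrm{A}_{g_{0,p},\alpha_p}$. By Product Lemma \ref{productLem}(2,ii) one always has $\lambda(A)\le\prod_p\lambda(A_p)\le 1$; moreover, if $\lambda(A_p)=1$ then $\Lambda(A_p)=|G_p|$, so the $\Lambda(A_p)$ are automatically pairwise coprime (they are powers of distinct primes). Hence $\lambda(A)=1$ if and only if $\lambda(A_p)=1$ for every $p$. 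In particular $\lambda_{\mathrm{aff}}(G_p)=1$, and Theorem \ref{affineTheo} then forces each $G_p$ to be cyclic $\mathbb{Z}/p^{k_p}\mathbb{Z}$, or (only for $p=2$) isomorphic to $(\mathbb{Z}/2\mathbb{Z})^2$. This reduces everything to identifying the full-cycle affine maps of these two group types.

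The key simplification I would use is that on a $p$-group of order $p^k$ the condition $\lambda(A_p)=1$ is equivalent to $\mathrm{ord}(A_p)=p^k$: the cycle lengths of $A_p$ sum to $p^k$ and have least common multiple $\mathrm{ord}(A_p)$, so if this lcm equals the prime power $p^k$ then some cycle length is divisible by $p^k$, hence equals $p^k$, i.e. $A_p$ is a single cycle (the converse being trivial). I would then evaluate $\mathrm{ord}(A_p)=\mathrm{ord}(f)\cdot\mathrm{ord}(\alpha_p)$ via Lemma \ref{affineLem}(1), where $f=g_{0,p}+\alpha_p(g_{0,p})+\cdots+\alpha_p^{\mathrm{ord}(\alpha_p)-1}(g_{0,p})$ written additively. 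In the case $G_2\cong(\mathbb{Z}/2\mathbb{Z})^2$ this is immediate: every element of $\mathrm{Aut}((\mathbb{Z}/2\mathbb{Z})^2)\cong\mathrm{GL}_2(\mathbb{F}_2)$ has order in $\{1,2,3\}$ and $\mathrm{ord}(f)\in\{1,2\}$, so $\mathrm{ord}(A_2)=4$ forces $\mathrm{ord}(\alpha_2)=2$ and $\mathrm{ord}(f)=2$; the latter says $f=g_2+\alpha_2(g_2)\neq 0$, i.e. $g_2\notin\mathrm{fix}(\alpha_2)$, which is exactly the condition in (b).

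The cyclic case is where the real work lies, and it is the main obstacle. Here $\alpha_p$ is multiplication by a unit $a_p$, and writing $o:=\mathrm{ord}(\alpha_p)$ and $g_{0,p}=:b_p$ one gets $f=b_p\sum_{i=0}^{o-1}a_p^i$. Requiring $\mathrm{ord}(A_p)=\mathrm{ord}(f)\cdot o=p^{k_p}$ forces $o$ to be a power of $p$; for odd $p$ this is equivalent to $a_p\equiv 1\ (\mathrm{mod}\ p)$. A $p$-adic valuation estimate of the geometric sum $\sum_{i=0}^{o-1}a_p^i$ (a lifting-the-exponent computation) then shows that $\mathrm{ord}(f)$ is maximal, and the product attains $p^{k_p}$, precisely when $b_p$ is a unit, i.e. $g_{0,p}$ generates $\mathbb{Z}/p^{k_p}\mathbb{Z}$. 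The one delicate point is the prime $2$, where $o$ is automatically a power of $2$: the lifting-the-exponent formula behaves differently at $p=2$, and the valuation computation goes through exactly when $a_2\equiv 1\ (\mathrm{mod}\ 4)$ once $k_2\ge 2$, which is the extra congruence recorded in (a). All these equivalences are reversible, so reading them backwards yields the direction (2)$\Rightarrow$(1). Finally I would reassemble: the odd Sylow parts are cyclic with $a_p\equiv 1\ (\mathrm{mod}\ p)$ and their product is the cyclic group $\mathbb{Z}/o\mathbb{Z}$ with $o$ odd, while the $2$-part is either cyclic, giving the fully cyclic group of (a), or $(\mathbb{Z}/2\mathbb{Z})^2$, giving (b).
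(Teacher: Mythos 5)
Your proposal is correct and follows essentially the same route as the paper: reduce to the Sylow components via the Product Lemma, invoke Theorem \ref{affineTheo} to restrict the components to cyclic $p$-groups and $(\mathbb{Z}/2\mathbb{Z})^2$, and then identify the full-cycle affine maps of those two types. The only difference is that where the paper cites Knuth's full-period theorem for the cyclic case (and calls $(\mathbb{Z}/2\mathbb{Z})^2$ an easy exercise), you supply the underlying order computation via Lemma \ref{affineLem}(1) together with a lifting-the-exponent valuation argument, which is exactly the content of that citation.
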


\begin{proof}
By Product Lemma \ref{productLem}(2,iv) and Theorem \ref{affineTheo}, it suffices to classify the periodic gFDGs $(G,A)$ where $\lambda(A)=1$ and $G$ is of one of the two forms $\mathbb{Z}/p^k\mathbb{Z}$, for $p$ prime and $k\geq 0$, and $(\mathbb{Z}/2\mathbb{Z})^2$.

For $(\mathbb{Z}/2\mathbb{Z})^2$, this is an easy exercise, and for the $\mathbb{Z}/p^k\mathbb{Z}$, a classification as desired is given by Knuth's theorem \cite[Section 3.2.1]{Knu98a}.
\end{proof}

\subsection{Some lower bounds on \texorpdfstring{$\lambda$}{lambda}-values of automorphisms of \texorpdfstring{$(\mathbb{Z}/2\mathbb{Z})^n$}{(Z/2Z)^n}}\label{subsec2P7}

The results from this subsection will mainly be used in the proof of Theorem \ref{denseTheo}, although Lemma \ref{anLem}(1) is also invoked at one point in the proof of Theorem \ref{abelianTheo}.

\begin{lemmma}\label{expLem}
Let $m\in\mathbb{N}^+$. Then \[\prod_{n\geq m}{(1-\frac{1}{2^n})}\geq\mathrm{exp}(-\frac{1}{2^{m-2}}).\]
\end{lemmma}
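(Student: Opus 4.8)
The plan is to establish the lower bound
\[
\prod_{n\geq m}\left(1-\frac{1}{2^n}\right)\geq\exp\left(-\frac{1}{2^{m-2}}\right)
\]
by taking logarithms and comparing the resulting series termwise against a geometric series. Since the left-hand side is a product of numbers in $(0,1)$ and the right-hand side is positive, it suffices to show
\[
\sum_{n\geq m}\log\left(1-\frac{1}{2^n}\right)\geq -\frac{1}{2^{m-2}}=-\frac{4}{2^m}.
\]
First I would use the elementary inequality $\log(1-x)\geq -\frac{x}{1-x}$ for $x\in[0,1)$, or alternatively the simpler estimate $\log(1-x)\geq -2x$ valid for $x\in[0,\tfrac{1}{2}]$. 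Since $\frac{1}{2^n}\leq\frac{1}{2^m}\leq\frac{1}{2}$ for all $n\geq m\geq 1$, the latter applies to every term in the sum.

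With the bound $\log(1-\tfrac{1}{2^n})\geq -\tfrac{2}{2^n}$ in hand, the next step is to sum the geometric series:
\[
\sum_{n\geq m}\log\left(1-\frac{1}{2^n}\right)\geq -2\sum_{n\geq m}\frac{1}{2^n}=-2\cdot\frac{1}{2^{m-1}}=-\frac{1}{2^{m-2}}.
\]
Exponentiating both sides (using monotonicity of $\exp$) then yields exactly the claimed inequality. The key computation is that $\sum_{n\geq m}2^{-n}=2^{-(m-1)}$, so that the factor $2$ from the logarithm estimate combines to give precisely $2^{-(m-2)}$, matching the exponent in the statement.

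The main thing to verify carefully is the choice of elementary inequality, since the target exponent $\tfrac{1}{2^{m-2}}$ is tight enough that a wasteful bound would fail. The estimate $\log(1-x)\geq -2x$ on $[0,\tfrac12]$ is the natural candidate because it gives exactly the constant $2$ needed, and it holds on the full range since the worst case occurs at $x=\tfrac12$, where $\log\tfrac12=-\log 2\approx -0.693\geq -1$. One should double-check this inequality over $[0,\tfrac12]$ — it follows from convexity of $\log(1-x)$ together with checking the two endpoints, or from comparing derivatives — but it is routine. I do not anticipate any serious obstacle; the only subtlety is ensuring the logarithm estimate is valid across the entire summation range, which is guaranteed precisely by $m\geq 1$ forcing every argument $\tfrac{1}{2^n}$ into $[0,\tfrac12]$.
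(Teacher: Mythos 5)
Your proof is correct and is essentially the paper's argument in logarithmic form: the paper bounds $(1-\frac{1}{2^n})^{-1}=1+\frac{1}{2^n-1}\leq\exp(\frac{1}{2^{n-1}})$ and sums the same geometric series $\sum_{n\geq m}\frac{1}{2^{n-1}}=\frac{1}{2^{m-2}}$, which is exactly your estimate $\log(1-x)\geq -2x$ for $x=\frac{1}{2^n}$ after exponentiating. One tiny slip: $\log(1-x)$ is concave, not convex (concavity is what puts it above the chord on $[0,\frac{1}{2}]$), but your derivative comparison correctly establishes the needed inequality in any case.
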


\begin{proof}
\[\prod_{n\geq m}{(1-\frac{1}{2^n})^{-1}}=\prod_{n\geq m}{(1+\frac{1}{2^n-1})}\leq\prod_{n\geq m}{\mathrm{exp}(\frac{1}{2^n-1})}\leq\prod_{n\geq m}{\mathrm{exp}(\frac{1}{2^{n-1}})}=\]\[\mathrm{exp}(\sum_{n\geq m}{\frac{1}{2^{n-1}}})=\mathrm{exp}(\frac{1}{2^{m-2}}).\]
\end{proof}

All bounds needed henceforth are summarized in the following lemma:

\begin{lemmma}\label{anLem}
Let $G$ be a finite elementary abelian $2$-group. Assume that $\alpha$ is an automorphism of $G$ with $\lambda(\alpha)>\frac{1}{2}$, so that by Theorem \ref{elAbTheo}, there exist $r\in\mathbb{N}^+$ and pairwise distinct monic primitive irreducible polynomials $P_1(X),\ldots,P_r(X)\in\mathbb{F}_2[X]$ such that the primary Frobenius blocks of $\alpha$ are the $\mathrm{M}(P_i(X))$ for $i=1,\ldots,r$. Set $d_i:=\mathrm{deg}(P_i(X))$ for $i=1,\ldots,r$. Then the following hold:

(1) $\lambda(\alpha)>\prod_p{(1-\frac{1}{2^p})}=\frac{5}{4}\cdot\rho_0>0.63038>\frac{9}{16}$, with $\rho_0$ as in Theorem \ref{denseTheo}.

From now on, assume $d_i>2$ for $i=1,\ldots,r$.

(2) $\lambda(\alpha)>(1-\frac{1}{2^4})\cdot\prod_{p\geq 3}{(1-\frac{1}{2^p})}>0.78797$.

(3) If $3,4\notin\{d_1,\ldots,d_r\}$, then $\lambda(\alpha)>(1-\frac{1}{2^6})\cdot\prod_{p\geq5}{(1-\frac{1}{2^p})}$.

(4) If $3\notin\{d_1,\ldots,d_r\}$, then $\lambda(\alpha)>(1-\frac{1}{2^4})(1-\frac{1}{2^9})\cdot\prod_{p\geq 5}{(1-\frac{1}{2^p})}$.

(5) If $4,5\notin\{d_1,\ldots,d_r\}$, then $\lambda(\alpha)>\prod_{n=3,8,25}{(1-\frac{1}{2^n})}\cdot\prod_{p\geq 7}{(1-\frac{1}{2^p})}$.

(6) If $4,7\notin\{d_1,\ldots,d_r\}$, then $\lambda(\alpha)>\prod_{n=3,5,8,49}{(1-\frac{1}{2^n})}\cdot\prod_{p\geq 11}{(1-\frac{1}{2^p})}=\frac{9}{8}\cdot\rho_1$, with $\rho_1$ as in Theorem \ref{denseTheo}.

(7) If $4\notin\{d_1,\ldots,d_r\}$, then $\lambda(\alpha)>(1-\frac{1}{2^8})\cdot\prod_{p\geq 3}{(1-\frac{1}{2^p})}$.

(8) If $5\notin\{d_1,\ldots,d_r\}$, then $\lambda(\alpha)>\prod_{n=3,4,25}{(1-\frac{1}{2^n})}\prod_{p\geq 7}{(1-\frac{1}{2^p})}$.
\end{lemmma}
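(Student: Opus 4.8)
The starting point is Theorem~\ref{elAbTheo}(3). Since $\lambda(\alpha)>\frac12$, the third bullet there applies: the primary Frobenius blocks of $\alpha$ are companion matrices of pairwise distinct monic primitive irreducible polynomials, their degrees $d_1,\dots,d_r$ are pairwise coprime, and
\[
\lambda(\alpha)=\prod_{i=1}^r(1-\frac{1}{2^{d_i}}).
\]
Moreover each $d_i\ge 2$, for a degree-$1$ factor would contribute $1-\frac12=\frac12$ and force $\lambda(\alpha)\le\frac12$. Every assertion of the lemma is thus an instance of one purely arithmetic problem: \emph{minimise $\prod_i(1-2^{-d_i})$ over all finite sets of pairwise coprime integers drawn from a prescribed admissible set $D$}, where $D=\{d\ge 2\}$ for (1), $D=\{d\ge 3\}$ for (2), and for (3)--(8) $D$ is $\{d\ge 3\}$ with the indicated degrees deleted. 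The plan is to pin down the minimising configuration explicitly in each case and then estimate its value.

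The optimisation is governed by two facts: $d\mapsto 1-2^{-d}$ is increasing, so small admissible degrees drive the product down, and pairwise coprimality means any two chosen degrees have disjoint sets of prime divisors. Hence one is really deciding, prime by prime, how cheaply each prime can be ``covered'' by an admissible degree. For (1) and (2) the covers do not interfere — the cheapest cover of an odd prime $q$ is $q$ itself, and of $2$ it is $2$ for (1) and $4$ for (2) — so a smallest-prime-factor charging argument (replace each $d_i$ by the cheapest admissible degree with the same smallest prime factor, and note that the resulting smallest prime factors are distinct, hence form a subset of all primes) already yields the stated bounds $\prod_p(1-2^{-p})$ and $(1-2^{-4})\prod_{p\ge 3}(1-2^{-p})$. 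The remaining parts are subtler for two reasons. First, when a prime $q$ is itself removed from $D$, its cheapest cover becomes a prime power; this is where the degrees $9=3^2$ (part (4)), $25=5^2$ (parts (5),(8)) and $49=7^2$ (part (6)) enter. Second, deleting $4$ forces a genuine choice for the primes $2$ and $3$: either merge them into the single degree $6$, contributing $1-2^{-6}$, or cover them separately. The winner is decided by an elementary product comparison whose outcome flips with the exclusions; e.g.\ in (7) one has $(1-2^{-8})(1-2^{-3})<1-2^{-6}$, so splitting (via $8$ and $3$) wins, while in (3), where $3$ is also deleted, the separate covers $8$ and $9$ give $(1-2^{-8})(1-2^{-9})>1-2^{-6}$, so merging via $6$ wins. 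These comparisons, carried out for the finitely many primes below an appropriate threshold $P$, determine exactly the explicit finite product appearing in each bound, every prime $p\ge P$ being covered most cheaply by $d=p$.

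With the minimiser identified, its value factors as (an explicit finite product over the small primes) times the tail $\prod_{p\ge P}(1-2^{-p})$, which I would bound below using Lemma~\ref{expLem}; combining the two and performing the (numerically checked) arithmetic produces the constants $\frac54\rho_0$, $\frac98\rho_1$, and the decimal estimates. Strictness of every inequality is then automatic: a genuine configuration is finite, so its product is a finite product of factors $<1$ and therefore strictly exceeds the completed infinite product that serves as the infimum.

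The step I expect to be the real obstacle is verifying that the proposed configuration is genuinely optimal, i.e.\ that no other grouping of primes does better. Besides the merge-versus-split dichotomy for $2$ and $3$, one must rule out mixed covers such as using $d=10=2\cdot 5$ to handle $2$ and $5$ together, or pairing two odd primes, and one must confirm that the cheapest cover of an excluded prime really is the claimed prime power rather than some larger product. A clean way to organise all of this is to pass to $-\log\lambda(\alpha)=\sum_i(-\log(1-2^{-d_i}))$ and maximise instead; there the dominant contribution of a degree $d$ is $2^{-d}$, which makes the ``small degrees matter most'' heuristic transparent and reduces each case to a short finite comparison. Once the optimal configuration is secured, the tail estimate via Lemma~\ref{expLem} and the final numerical bounds are routine.
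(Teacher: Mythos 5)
Your proposal is correct and follows essentially the same route as the paper: derive $\lambda(\alpha)=\prod_i(1-2^{-d_i})$ from Theorem \ref{elAbTheo}, then injectively charge each $d_i$ to a smaller admissible exponent (a prime, or a prime power such as $4,8,9,25,49$ when the prime itself is excluded) using pairwise coprimality, handle the $2$--$3$ interaction via the comparison of $1-2^{-6}$ with $(1-2^{-8})(1-2^{-9})$, and finish with Lemma \ref{expLem} plus numerical checks. The only cosmetic difference is that you charge via the smallest prime factor where the paper uses the largest prime divisor; both assignments are injective and dominated, so the argument is the same.
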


\begin{proof}
For (1): In order to see that $\lambda(\alpha)>\prod_p{(1-\frac{1}{2^p})}$, proceed as follows: We know that $\lambda(\alpha)=\prod_{i=1}^r{(1-\frac{1}{2^{d_i}})}$, and the $d_i$ are greater than $1$ and pairwise coprime. This allows us to injectively assign to each $d_i$ a value $p_i\leq d_i$ among the \enquote{admissible} exponents $p$ occurring in the denominators of the factors $(1-\frac{1}{2^p})$ of the lower bound, namely the largest prime divisor of $d_i$. Replacing $d_i$ by $p_i$ in the above representation of $\lambda(\alpha)$, we obtain a lower bound on it which is a subproduct of and therefore strictly bounded below by the infinite product $\prod_p{(1-\frac{1}{2^p})}$. The second inequality follows by Lemma \ref{expLem} and checking numerically that $\prod_{p\leq 17}{(1-\frac{1}{2^p})}\cdot\mathrm{exp}(-\frac{1}{2^{17}})>0.63038$, and the last inequality is clear.

For (2): The set of admissible exponents is $\{4\}\cup\mathbb{P}^+$. Replace $d_i$ divisible by an odd prime by their largest prime divisor, and should one $d_i$ happen to be a power of $2$, then by assumption, it is greater than or equal to $4$ and we may replace it by $4$. For the numerical bound, just check that $\frac{0.63038}{1-\frac{1}{2^2}}\cdot(1-\frac{1}{2^4})>0.78797$.

For (3): The set of admissible exponents is $\{6\}\cup(\mathbb{P}^+\setminus\{3\})$. Replace $d_i$ divisible by a prime distinct from $2$ and $3$ by the largest such prime. As for $d_i$ only divisible by $2$ or $3$: If there is precisely one such $d_i$, it is at least $6$ and we replace it by this number. And if one $d_i$ is a power of $2$ and another, say $d_j$, is a power of $3$, then $d_i\geq 8$ and $d_j\geq 9$. Now it is readily checked that $(1-\frac{1}{2^8})(1-\frac{1}{2^9})>1-\frac{1}{2^6}$, so for the lower bound, we can then replace both factors by $1-\frac{1}{2^6}$.

The proofs of points (4)--(8) all follow the same idea (replacing $d_i$ divisible by at least one \enquote{admissible} prime by one of them and treating the other $d_i$ separately), but are easier than the argument for point (3).
\end{proof}

\subsection{Effective classification of periodic abelian FDGs \texorpdfstring{$(G,\alpha)$}{(G,alpha)} with \texorpdfstring{$\lambda(\alpha)=\rho$}{lambda(alpha)=rho} for fixed \texorpdfstring{$\rho\geq\frac{1}{2}$}{rho>=1/2}}\label{subsec2P8}

In this subsection, we consider the following two problems:

(1) For a given $\rho\in\left[\frac{1}{2},1\right]$, determine, up to isomorphism, the complete list of periodic abelian FDGs $(G,\alpha)$ such that $\lambda(\alpha)=\rho$.

(2) For a given $\rho\in\left[\frac{1}{2},1\right]$, determine, up to isomorphism, the complete list of finite abelian groups $G$ such that $\lambda(G)=\rho$.

Note that by Theorem \ref{abelianTheo} and Theorem \ref{equalTheo}, being able to effectively solve either of these two problems is equivalent to being able to effectively solve the respective problem for all periodic FDGs or all finite groups; the only difference is that the algorithm for the more general version of (1) would, in case $\rho=\frac{1}{2}$, also add (descriptions of) the four classes of nonabelian FDGs described in Theorem \ref{equalTheo}, and the algorithm for the more general version of (2) would add (descriptions of) the underlying groups of those FDGs. We will provide a detailed algorithmic solution to (1), argue (based on the algorithm for (1)) why (2) can be effectively solved, and use the gained insights to prove the \enquote{abelian part} of Theorem \ref{equalTheo} (see Proposition \ref{abEqProp} below).

We now start to describe the algorithm for determining all solutions on the FDG level, i.e., for problem (1). Given a rational number $\rho=\frac{a}{b}\in\left[\frac{1}{2},1\right]$ such that $\mathrm{gcd}(a,b)=1$, the algorithm first determines the solutions where the order of $G$ is primary. Now it is clear by Theorem \ref{lccAbTheo}, Theorem \ref{elAbTheo} and Corollary \ref{properCor} that if $G$ is a finite abelian $2$-group having an automorphism $\alpha$ such that $\lambda(\alpha)=\frac{1}{2}$, then $(G,\alpha)$ is isomorphic to one of the following:

(1) $\mathbb{Z}/2\mathbb{Z}$ with the identity.

(2) $\mathbb{Z}/4\mathbb{Z}$ with the multiplication by $3$.

(3) $\mathbb{Z}/2\mathbb{Z}\times\mathbb{Z}/4\mathbb{Z}=\langle v_1,v_2\mid v_1^2=v_2^4=[v_1,v_2]=1\rangle$ with the automorphism given sending $v_1\mapsto v_1v_2^2,v_2\mapsto v_1v_2$ (to see that $\mathbb{Z}/2\mathbb{Z}\times\mathbb{Z}/4\mathbb{Z}$ underlies precisely one $\lambda$-maximal FDG isomorphism type, use GAP \cite{GAP4}).

(4) $(\mathbb{Z}/2\mathbb{Z})^2$ with the automorphism given by the companion matrix of $(X-1)^2\in\mathbb{F}_2[X]$.

(5) $(\mathbb{Z}/2\mathbb{Z})^3$ with the automorphism given by the companion matrix of $(X-1)^3\in\mathbb{F}_2[X]$.

Hence for $\rho=\frac{1}{2}$, the algorithm adds these FDGs to the list. On the other hand, if $\rho>\frac{1}{2}$ is to be the $\lambda$-value of some periodic abelian $p$-FDG $(G,\alpha)$, say with $|G|=p^m$, then by Corollary \ref{abStrictCor}, one of the following two cases must occur:

(A) $p$ is odd and $\rho=1-\frac{1}{p}$. In this case, by Corollary \ref{abStrictCor}, the list consists of all FDGs of the form $(\mathbb{Z}/p^m\mathbb{Z},\alpha)$ for $m\geq 1$, where $\alpha$ is the multiplication by a primitive root modulo $p^m$ (where different primitive roots yield distinct FDG isomorphism types), plus the exceptional $((\mathbb{Z}/p\mathbb{Z})^2,\alpha)$, with any $\alpha$ given, with respect to the standard basis, by $\mathrm{M}((X-g)^2)$ for some generator $g$ of $\mathbb{F}_p^{\ast}$ (where different generators correspond to distinct FDG isomorphism types).

(B) $p$ is any prime, $m\geq 2$ and $b=p^m$, whence $|G|$ can be read off from $\rho$, and the actual list of solutions is then easily determined in view of Corollary \ref{abStrictCor} (note that if $b$ is a power of $2$, one still needs to do some work, going through all additive decompositions of $\mathrm{log}_2(b)$ as described in point 1 of the classification and checking whether one \enquote{matches}).

Next, the algorithm determines the solutions where $|G|$ is not primary. Observe that by Product Lemma \ref{productLem}(2,iii), the Sylow FDGs $(G_p,\alpha_p)$ in a periodic abelian FDG $(G,\alpha)$ with $\lambda$-value $\rho\geq\frac{1}{2}$ such that $|G|$ is not primary all have $\lambda$-value greater than $\frac{1}{2}$ and hence by Corollary \ref{abStrictCor} and Product Lemma \ref{productLem}(2,ii), $G=G_2\times G_p$ for some odd prime $p$, $(G_2,\alpha_2)=\prod_{i=1}^r\mathcal{V}(P_i(X))$, with $P_i(X)\in\mathbb{F}_2[X]$ monic, primitive and irreducible for $i=1,\ldots,r$, and either $G_p$ is primary cyclic and $\Lambda(\alpha_p)=\phi(|G_p|)$, or $G_p$ is elementary abelian and $\Lambda(\alpha_p)=|G_p|-1$, or $G_p\cong(\mathbb{Z}/p\mathbb{Z})^2$ and $\Lambda(\alpha_p)=\phi(p^2)=p(p-1)$. Furthermore, by Product Lemma \ref{productLem}(2,ii) the contributions of the factors $\mathcal{V}(P_i(X))$ and of $G_p$ to the order of $\alpha$ must be pairwise coprime, and the $\lambda$-value of $\alpha$ is the product of the $\lambda$-values of the single parts. Hence if $\rho$ is to be the $\lambda$-value of some automorphism $\alpha=\alpha_2\times\alpha_p$ of some finite abelian group $G=G_2\times G_p$ whose order is not primary, then \[\rho=\frac{\prod_{i=1}^r{(2^{d_i}-1)}\cdot\Lambda(\alpha_p)}{2^{d_1+\cdots+d_r}p^m}\] for pairwise coprime $d_1,\ldots,d_r\geq 2$ such that all $2^{d_i}-1$ are coprime with $\Lambda(\alpha_p)$, and $p^m$ is the order of the Sylow $p$-subgroup $G_p$. In particular, since $b$ is a divisor of the denominator of that fraction, $b$ is divisible by at most one odd prime. In case $m\geq 2$ and $\Lambda(\alpha_p)=\phi(p^m)=p^{m-1}(p-1)$, no $2^{d_i}-1$ is divisible by $p$, so $\nu_p(b)=1$. In any case, $\nu_q(\prod_{i=1}^r{(2^{d_i}-1)}\cdot\Lambda_p)=\nu_q(a)$ for any odd prime $q$ distinct from $p$, so $a$ differs from the \enquote{original} numerator only in its $2$-adic and $p$-adic valuation. We may assume w.l.o.g.~that if $p$ divides any of the $2^{d_i}-1$, then it divides $2^{d_1}-1$. By pairwise coprimality of the $2^{d_i}-1$, the numbers $2^{d_2}-1,\ldots,2^{d_r}-1$ then are products of prime powers of the form $q^{\nu_q(a)}$ for odd primes $q\not=p$. For linguistical simplicity, we will henceforth call $n_1\in\mathbb{N}^+$ a \textit{full divisor} of $n_2\in\mathbb{N}^+$ if and only if $n_1\mid n_2$ and for all primes $p'$: If $p'\mid n_1$, then $\nu_{p'}(n_1)=\nu_{p'}(n_2)$. We denote the set of full divisors of $n\in\mathbb{N}^+$ by $\mathcal{T}(n)$ and set \[\mathcal{T}_0(n):=\{t\in\mathcal{T}(n)\mid t+1\text{ is a power of }2\}.\] The algorithm now proceeds as follows:

\begin{enumerate}
\item Compute the prime factorization of $b$.
\item If $b$ is divisible by two distinct odd primes, then there are no abelian periodic FDGs with $\lambda$-value $\rho$; output $\emptyset$ and halt.
\item If $b=2^kp^l$ for some odd prime $p$ with $l\geq 2$, then note that $p$ is the unique odd prime divisor of $|G|$ in any abelian periodic FDG $(G,\alpha)$ with $\lambda(\alpha)=\rho$. Also, by the above observations, the Sylow $p$-FDG of $(G,\alpha)$ is isomorphic to $((\mathbb{Z}/p\mathbb{Z})^m,\alpha_p)$ for some $m\geq l$ such that $\alpha_p$ is an automorphism with precisely one primary Frobenius block, which is the companion matrix of an irreducible primitive polynomial of degree $m$ over $\mathbb{F}_p$. Note that by coprimality of $a$ and $b$, we have $p\nmid a$. In order to make a case distinction according to the values of the $2^{d_i}-1$ for $i\geq 2$, compute $\mathcal{T}_0(a)$. The idea now is to loop over all tuples $(t_1,\ldots,t_r)$ with pairwise distinct and pairwise coprime entries from $\mathcal{T}_0(a)$, viewing $r$ as the number of factors $2^{d_i}-1$ of the numerator of $\lambda(\alpha_2)$, and the entries $t_1,\ldots,t_r$ as the \enquote{remnants} of the factors $2^{d_1}-1,\ldots,2^{d_r}-1$ after cancelation; more precisely, $t_1$ is interpreted as the value of $\frac{2^{d_1}-1}{p^{\nu_p(2^{d_1}-1)}}$, and $t_i$ for $i=2,\ldots,r$ as the value of $2^{d_i}-1$. Note that necessarily $\rho<\prod_{i=2}^r{(1-\frac{1}{2^{d_i}})}$, so we can refute the case if this inequality does not hold. Otherwise, if $\nu_p(2^{d_1}-1)=l'$, then $m=l+l'$, and so \[\rho=(1-\frac{1}{p^{l'}t+1})(1-\frac{1}{p^{l+l'}})\prod_{i=2}^r{(1-\frac{1}{t_i+1})},\] which leaves at most one possibility for $l'$ within an effectively bounded range of values, since the RHS of this inequality is strictly increasing in $l'$ and will eventually be strictly greater than $\rho$.
\item If $b=2^kp$ for some odd prime $p$, then again, $p$ is the unique odd prime divisor of $|G|$. If $(G_p,\alpha_p)$ is $(\mathbb{Z}/p\mathbb{Z})^m$ with $\alpha_p$ an automorphism of order $p^m-1$ as above, then the case can be treated as in point 3. However, it now may also be that the Sylow $p$-FDG of $(G,\alpha)$ is isomorphic to $(\mathbb{Z}/p^m\mathbb{Z},\alpha_p)$ for some $m\geq 2$ such that $\alpha_p$ is the multiplication by a primitive root modulo $p^m$, or to $((\mathbb{Z}/p\mathbb{Z})^2,\alpha_p)$, where the only primary Frobenius block of $\alpha_p$ is $\mathrm{M}((X-g)^2)$ for any generator $g$ of $\mathbb{F}_p^{\ast}$. If this happens, though, then we know that $p\mid\Lambda_p$, so $p$ does not divide any of the $2^{d_i}-1$, which is good news, since now \textit{all} values of the $2^{d_i}-1$ are known after fixing a distribution of full divisors of $a$ as above, meaning we then know the Frobenius type of the Sylow $2$-FDG $(G_2,\alpha_2)$ of $(G,\alpha)$. Now $1-\frac{1}{p}=\lambda(\alpha_p)=\frac{\rho}{\lambda(\alpha_2)}$, so if the leftmost term in this equality chain is not equal to the rightmost term, or if $\mathrm{gcd}(\Lambda(\alpha_2),p-1)>1$, the subcase is contradictory. Otherwise, there are two possibilities: If also $p\nmid\Lambda(\alpha_2)$, then the subcase corresponds to infinitely many isomorphism types of periodic FDGs with $\lambda$-value $\rho$, namely all of the form $(G_2,\alpha_2)\times(G_p,\alpha_p)$, where $(G_p,\alpha_p)$ is any of the finite abelian $p$-FDGs with $\lambda$-value $1-\frac{1}{p}$. And if $p\mid\Lambda(\alpha_2)$, then the subcase yields no FDGs not already contained in the first case.
\item If $b=2^k$, we are in the unfortunate position to not know $p$ right away, but at least we know, just as in point 3, that $G_p\cong(\mathbb{Z}/p\mathbb{Z})^m$ and $\Lambda(\alpha_p)=p^m-1$. Also, we now know that $2^{d_1}-1$ is divisible by $p$ (otherwise, $p^m$ in the denominator could not have canceled). We will be able to treat this point similarly to point 3, but we need to bound $p^m$ first. We do this \textit{after} fixing a distribution of full divisors $t_i$ of $a$ as before. Now note that whether or not $p$ is one of the primes \enquote{visible} in the list of prime divisors of $2^{d_1}-1$ just fixed, the precise value of $2^{d_1}-1$ equals $p^mt_1$. Therefore, \[\rho=(1-\frac{1}{p^mt_1+1})(1-\frac{1}{p^m})\prod_{i=2}^r{(1-\frac{1}{t_i+1})},\] leaving at most one possibility for $p^m$ by the same idea as at the end of point 3, and in case an odd primary number $p^m$ satisfying this equation exists, it only remains to check whether the corresponding cycle lengths from the factors are pairwise coprime.
\end{enumerate}

After this explicit description of an algorithm for problem (1), we now briefly justify the effective solvabililty of problem (2). For this, first run the algorithm for (1) to obtain all solutions on the FDG level. Of course, for each finite group $G$ and automorphism $\alpha$ of $G$, deciding whether $\lambda(G)>\lambda(\alpha)$ holds can be done algorithmically. In those cases where there are infinitely many solutions on the FDG level, it is clear by the explicit description of the algorithm above that the full FDG solution list is a union of a finite solution list with finitely many infinite lists, each of the form \[\{\prod_{i=1}^r{\mathcal{V}(P_i(X))}\times\mathcal{M}(p^m,g)\mid m\geq 2,g\text{ is a primitive root mod }p^m\}\] for some monic primitive irreducible polynomials $P_1(X),\ldots,P_r(X)\in\mathbb{F}_2[X]$. Set $d_i:=\mathrm{deg}(P_i(X))$ for $i=1,\ldots,r$, and note that for such an infinite class, either all its members are $\lambda$-maximal or all are not $\lambda$-maximal, with the check consisting in the problem of verifying whether or not there exists an automorphism of $(\mathbb{Z}/2\mathbb{Z})^{d_1+\cdots+d_r}$ whose largest cycle length $L$ is coprime with $p(p-1)$ and such that $L>\prod_{i=1}^r{(2^{d_i}-1)}$. Therefore, problem (2) indeed has an algorithmic solution.

We can now give a short \enquote{by hand} proof of the \enquote{abelian part} of Theorem \ref{equalTheo}:

\begin{propposition}\label{abEqProp}
The FDGs from points 1--6 in Theorem \ref{equalTheo} form, up to isomorphism, the complete list of periodic \textbf{abelian} FDGs with $\lambda$-value $\frac{1}{2}$.
\end{propposition}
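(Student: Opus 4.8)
The plan is to pass to the Sylow decomposition and then reassemble. Let $(G,\alpha)$ be a periodic abelian FDG with $\lambda(\alpha)=\frac12$, write $G=\prod_p G_p$ for the product of Sylow subgroups, and split $\alpha=\prod_p\alpha_p$ using the splitting property and Product Lemma \ref{productLem}(2,i). Since Product Lemma \ref{productLem}(2,ii) gives $\lambda(\alpha)\le\prod_p\lambda(\alpha_p)$ and each factor is at most $1$, every $\lambda(\alpha_p)\ge\lambda(\alpha)=\frac12$, so Theorem \ref{lccAbTheo} restricts each $G_p$ to be elementary abelian, primary cyclic, or $\mathbb Z/2\mathbb Z\times\mathbb Z/4\mathbb Z$. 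Inspecting the $\Lambda$-values in these cases (via Example \ref{lambdaEx} and Theorem \ref{elAbTheo}) I would record the two facts that drive the whole argument: for every \emph{odd} prime $p$ one has $\lambda(\alpha_p)>\frac12$ and $2\mid\Lambda(\alpha_p)$, since $p-1$, $p^m-1$ and $p(p-1)$ are all even.

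First I would show that $|G|$ has at most one odd prime divisor. If two odd primes $p,q$ both divided $|G|$, then $2$ would divide both $\Lambda(\alpha_p)$ and $\Lambda(\alpha_q)$, so these are not coprime and Product Lemma \ref{productLem}(2,ii) yields $\lambda(\alpha)\le\frac12\prod_p\lambda(\alpha_p)<\frac12$, a contradiction. This leaves two cases. In Case A, $G$ is a $2$-group and $(G,\alpha)=(G_2,\alpha_2)$ with $\lambda(\alpha_2)=\frac12$; this enumeration is essentially the one already displayed just before the proposition in Subsection \ref{subsec2P8}. Combining Theorem \ref{lccAbTheo}, Theorem \ref{elAbTheo} and Corollary \ref{properCor} — the last ruling out the squarefree primitive product whenever $|G_2|\ge2^4$, while for smaller order the equation $\prod_i(2^{d_i}-1)=2^{\sum_i d_i-1}$ forces a single degree-$1$ factor — gives exactly points 1, 2, 4, 5 and the group $\mathbb Z/2\mathbb Z\times\mathbb Z/4\mathbb Z$, whose unique $\lambda$-maximal FDG type is point 3.

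In Case B, $G=G_2\times G_p$ with $p$ odd and $G_p$ nontrivial. Here $\lambda(\alpha_p)>\frac12$ forces $\lambda(\alpha_2)=\frac{1}{2\lambda(\alpha_p)}\in(\tfrac12,1)$, so $G_2$ is nontrivial, and for the product formula $\frac12=\lambda(\alpha_2)\lambda(\alpha_p)$ to hold with equality the values $\Lambda(\alpha_2),\Lambda(\alpha_p)$ must be coprime by Product Lemma \ref{productLem}(2,ii); since $2\mid\Lambda(\alpha_p)$, this makes $\Lambda(\alpha_2)$ odd. By Theorem \ref{elAbTheo} that forces $G_2=(\mathbb Z/2\mathbb Z)^s$ with $\alpha_2$ built from distinct primitive polynomials of pairwise coprime degrees $d_1,\dots,d_r$, so $\Lambda(\alpha_2)=Q:=\prod_i(2^{d_i}-1)$ and $s=\sum_i d_i$. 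Writing $\lambda(\alpha_p)=\Lambda(\alpha_p)/|G_p|$ and comparing $2$-adic valuations in $2Q\,\Lambda(\alpha_p)=2^{s}|G_p|$ gives $\nu_2(\Lambda(\alpha_p))=s-1$ and hence $Q\cdot u=|G_p|$, where $u$ is the odd part of $\Lambda(\alpha_p)$; as $Q$ is coprime to $\Lambda(\alpha_p)$ and $|G_p|=p^e$, the coprime factorization forces $Q=p^e$ and $u=1$.

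The main obstacle is this final number-theoretic pinch, and the delicate point is that the coprimality hypothesis of the Product Lemma must be kept in force at every step — it is exactly what both excludes a second odd prime and pins down that $\Lambda(\alpha_p)$ is a power of $2$. With $u=1$ in hand I would run through the three shapes of $(G_p,\alpha_p)$ allowed by Theorems \ref{lccAbTheo} and \ref{elAbTheo}: the requirement that $\Lambda(\alpha_p)$ be a $2$-power eliminates $\mathbb Z/p^k\mathbb Z$ for $k\ge2$ and the $(X-g)^2$-type on $(\mathbb Z/p\mathbb Z)^2$, since both have $p\mid\Lambda(\alpha_p)$, leaving $G_p=(\mathbb Z/p\mathbb Z)^m$ with $\alpha_p$ primitive and $p^m-1=2^{s-1}$. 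Then $Q=p^m$ together with pairwise coprimality of the $2^{d_i}-1$ forces $r=1$ and $2^{d_1}-1=p^m=2^{d_1-1}+1$, whose only solution is $d_1=2$, $p^m=3$. This is precisely point 6, where $\gcd(Q,\Lambda(\alpha_p))=\gcd(3,2)=1$, and it completes the list.
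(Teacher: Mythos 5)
Your proof is correct. It uses the same underlying ingredients as the paper (the splitting over Sylow subgroups, Product Lemma \ref{productLem}(2,ii) to force pairwise coprimality of the $\Lambda(\alpha_p)$ and hence at most one odd prime divisor of $|G|$, and the classifications of Theorem \ref{lccAbTheo} and Theorem \ref{elAbTheo} for the individual Sylow FDGs), but it is written as a self-contained argument, whereas the paper's proof of Proposition \ref{abEqProp} is a two-line appeal to the algorithm of Subsection \ref{subsec2P8}: with $a=1$, $b=2$, $l=0$ there is only one full-divisor distribution, and step 5 of the algorithm (the equation $\rho=(1-\frac{1}{p^mt_1+1})(1-\frac{1}{p^m})\prod_{i\geq 2}(1-\frac{1}{t_i+1})$ with $t_1=1$, which pins down $p^m=3$) does the work your $2$-adic/odd-part comparison and the identity $2^{d_1}-1=2^{d_1-1}+1$ do by hand. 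Your version has the advantage of not depending on the correctness of the general-purpose algorithm; the paper's has the advantage of brevity, since the case analysis was already set up for arbitrary $\rho$. Two points you leave implicit but should state: why $\lambda(\alpha)=\lambda(\alpha_2)\lambda(\alpha_p)$ holds at all in Case B (if the $\Lambda$'s were not coprime, Product Lemma \ref{productLem}(2,ii) would give $\lambda(\alpha)\leq\frac{1}{2}\lambda(\alpha_2)\lambda(\alpha_p)<\frac{1}{2}$ since $G_p$ is nontrivial), and why $Q\neq 1$ in the coprime factorization $Qu=p^e$ (because $G_2$ is nontrivial and all $d_i\geq 2$, so $Q\geq 3$); both are one-line fixes and do not affect the validity of the argument.
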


\begin{proof}
Since the $p$-FDG case is clear and $(\mathbb{Z}/2\mathbb{Z})^2\times\mathbb{Z}/3\mathbb{Z}$ has an automorphism of order $6$ and thus of $\lambda$-value $\frac{1}{2}$, it suffices to show that up to isomorphism, there is at most one periodic FDG $(G,\alpha)$ with $\lambda(\alpha)=\frac{1}{2}$ such that $G$ is abelian and $|G|$ is not primary. Using notation from the description of the algorithm for problem (1) above, we have $a=1$, $b=2$, $l=0$ here. The complete set of full divisors of $a$ is, of course, $\{1\}$, so there is only one full divisor distribution to be considered. By the above description of the algorithm, it is clear that every distribution to be considered yields at most one solution, which concludes the proof.
\end{proof}

\section{Proofs of the main results}\label{sec3}

\subsection{Proof of Theorem \ref{abelianTheo} and Corollary \ref{strictCor}}\label{subsec3P1}

Corollary \ref{strictCor} follows immediately from Theorem \ref{abelianTheo} via Corollary \ref{abStrictCor}, so it suffices to prove Theorem \ref{abelianTheo}. Let $G$ be a finite group such that $\lambda(G)>\frac{1}{2}$. Fix an automorphism $\alpha$ of $G$ such that $\lambda(\alpha)=\lambda(G)$ and assume, for a contradiction, that $G$ is nonabelian. As observed at the beginning of Subsection \ref{subsec2P2}, setting $A:=\{1_G\}$ in Inversion Lemma \ref{inversionLem}, we find that $\mathrm{l}(G)>\frac{1}{2}$, whence by Theorem \ref{lStructureTheo}, $G$ is of one of the three types described there. We now go through those three cases in reversed order, since that is also the ordering of the cases by increasing difficulty.

If $G$ is of type (III), set $A:=\zeta G$ and let $\tilde{\alpha}$ denote the automorphism of $G/A\cong(\mathbb{Z}/2\mathbb{Z})^4$ induced by $\alpha$. By Automorphism Quotient Transfer (Lemma \ref{transferLem}(3,a)), we have $\lambda(\tilde{\alpha})>\frac{1}{2}$, which by Theorem \ref{elAbTheo} and the fact that $4$ has no additive decomposition into pairwise coprime summands greater than $1$ except for the trivial one implies that $\lambda(\tilde{\alpha})=\frac{15}{16}$. Applying Inversion Lemma \ref{inversionLem}, it follows that $\mathrm{l}(G)>\frac{15}{16}$ so that by Corollary \ref{millerCor}, $G$ is abelian, a contradiction.

If $G$ is of type (II), again set $A:=\zeta G$ and let $\tilde{\alpha}$ be the automorphism of $G/A\cong(\mathbb{Z}/2\mathbb{Z})^{2k}$ (for some $k\geq 2$) induced by $\alpha$. By Lemma \ref{anLem}(1), we have $\lambda(\tilde{\alpha})>\frac{9}{16}$, so $\mathrm{l}(G)>\frac{9}{16}$. Since by Theorem \ref{lStructureTheo}, $\mathrm{l}(G)=\frac{2^k+1}{2^{k+1}}$, it follows that $k=2$, and we derive the same contradiction as in the previous case.

If $G$ is of type (I), fix an abelian subgroup $A$ of index $2$ in $G$ and an element $x\in G\setminus A$. Then $x$ acts on $A$ by conjugation, and this action must be nontrivial, since otherwise, $G$ would be abelian. Also, this action is equal to the action of any element from $G\setminus A=xA$ since $A$ is abelian. It follows that $\zeta G\subseteq A$. Since $\lambda(\alpha)>\frac{1}{2}$, the domain of the unique largest cycle of $\alpha$ must contain both an element from $A\setminus\zeta G$ and from $xA$. Easy computations show that the centralizer of any element from $A\setminus\zeta G$ is $A$ and that the centralizer of any element from $xA$ consists of $\zeta G$ and one coset of $\zeta G$ in $xA$. Comparing the centralizer orders in both cases, we conclude that $[A:\zeta G]=2$, or $[G:\zeta G]=4$, whence $G/\zeta G\cong(\mathbb{Z}/2\mathbb{Z})^2$. Denote by $\tilde{\alpha}$ the automorphism of $G/\zeta G$ induced by $\alpha$. Then in view of $\lambda(\tilde{\alpha})>\frac{1}{2}$, we conclude that $\lambda(\tilde{\alpha})=\frac{3}{4}$, and thus, by an application of Inversion Lemma \ref{inversionLem}, $\mathrm{l}(G)>\frac{3}{4}$, so one last time, by Corollary \ref{millerCor}, we arrive at the contradictory conclusion that $G$ is abelian.\qed

\subsection{Proof of Theorem \ref{equalTheo}}\label{subsec3P2}

By Proposition \ref{abEqProp}, it suffices to show that any periodic FDG $(G,\alpha)$ such that $\lambda(\alpha)=\frac{1}{2}$ and $G$ is nonabelian is isomorphic to one of the FDGs from points 7-10.

The proof is structured into the following steps:

\begin{enumerate}
\item $G$ is of type (I), i.e., $G$ has an abelian subgroup $A$ of index $2$.

\item $A$ can be chosen to be $\alpha$-admissible, which clearly implies that the domain of the unique largest cycle $\sigma$ of $\alpha$ coincides with $G\setminus A$.

\item $(G,\alpha)$ is isomorphic to one of the nonabelian FDGs from the theorem, and the dependence of the isomorphism type on the case-specific parameter is as asserted by the theorem.
\end{enumerate}

For 1: We can derive a contradiction in the cases where $G$ is of type (II) or of type (III) as in the proof of Theorem \ref{abelianTheo} by observing that by Corollary \ref{properCor}, for the induced automorphism $\tilde{\alpha}$ on the central quotient of $G$, $\lambda(\tilde{\alpha})>\frac{1}{2}$.

For 2: Assume that $\mathrm{dom}(\sigma)\not=G\setminus A$. Then $\mathrm{dom}(\sigma)$ intersects both with $A\setminus\zeta G$ and with $G\setminus A$, so as in the proof of Theorem \ref{abelianTheo}, we conclude that $[G:\zeta G]=4$; in particular, $G$ is nilpotent of class $2$ with all Sylow subgroups except for the Sylow $2$-subgroup being abelian. If $|G|$ is divisible by some odd prime $p$, then by Product Lemma \ref{productLem}(2,iii), $\lambda(G_2)>\frac{1}{2}$ for the Sylow $2$-subgroup $G_2$ of $G$, which by Theorem \ref{abelianTheo} implies that $G_2$ and hence $G$ is abelian, a contradiction. Therefore, $G$ is a $2$-group. Furthermore, the automorphism $\tilde{\alpha}$ of $G/\zeta G$ induced by $\alpha$ has $\lambda$-value at least $\frac{1}{2}$ by the Automorphism Quotient Transfer Lemma \ref{transferLem}(3,a), but if $\lambda(\tilde{\alpha})>\frac{1}{2}$, we arrive at the same contradiction as in the proof of Theorem \ref{abelianTheo}. Hence $\lambda(\tilde{\alpha})=\frac{1}{2}$, and $\mathrm{dom}(\sigma)$ is equal to the union of two cosets of $\zeta G$ in $G$, one of them being $A\setminus\zeta G$. Setting $\beta:=\alpha^2$, we find that $\beta$ restricts to an automorphism $\overline{\beta}$ of $A$ such that $\lambda(\overline{\beta})=\frac{1}{2}$. Since $A$ is a finite abelian $2$-group, by Proposition \ref{abEqProp} and nonabelianity of $G$, $(A,\overline{\beta})$ is isomorphic to one of the following:

(a) $(\mathbb{Z}/2\mathbb{Z})^2$ with the automorphism represented by the companion matrix of $(X-1)^2\in\mathbb{F}_2[X]$,

(b) $\mathbb{Z}/4\mathbb{Z}$ with multiplication by $3$,

(c) $(\mathbb{Z}/2\mathbb{Z})^3$ with the automorphism represented by the companion matrix of $(X-1)^3\in\mathbb{F}_2[X]$,

(d) $\mathbb{Z}/2\mathbb{Z}\times\mathbb{Z}/4\mathbb{Z}$ with the automorphism represented by the matrix $\begin{pmatrix}1 & 1 \\ 2 & 1\end{pmatrix}$.

In cases (a) and (b), we find that $|G|=8$, whence $G\cong\mathrm{D}_8$ or $G\cong\mathrm{Dic}_8=\mathrm{Q}_8$. $\mathrm{D}_8$ has a characteristic abelian subgroup of index $2$, by which we can replace $A$. $\mathrm{Q}_8$, in turn, does not have a characteristic subgroup of index $2$, but it is not difficult to see that in this case, $\alpha$, being an automorphism of order $4$ of $\mathrm{Q}_8$, has a fixed point $f$ of order $4$, so we may replace $A$ by $\langle f\rangle$.

In cases (c) and (d), we conclude that $|G|=16$ and that $\mathrm{ord}(\alpha)=8$, since the elements of $G$ whose cycle under $\alpha$ has length a divisor of $8$ form a subgroup of $G$. Observe that $|\zeta G|=4$, so the restriction of $\alpha$ to $\zeta G$ only has cycles of length at most $2$, whence $\overline{\beta}$ has, apart from its $4$-cycle, four fixed points. However, it is readily checked that each of the automorphisms as described in (c) and (d) only has two fixed points, a contradiction.

For 3: Assuming w.l.o.g.~that $\mathrm{dom}(\sigma)=G\setminus A$, which is a coset of $A$ in $G$, we can conclude by Transfer Lemma \ref{transferLem}(2) that, denoting by $\overline{\alpha}$ the restriction of $\alpha$ to $A$, fixing $x\in G\setminus A$ and letting $a_0\in A$ be such that $\alpha(x)=xa_0$, we have $\lambda(\mathrm{A}_{a_0,\overline{\alpha}})=1$, whence we can derive some heavy restrictions on both $A$ and $\alpha$ by Corollary \ref{affineCor}. There are two cases to distinguish, according to whether or not $A$ is cylic:

If $A$ is cyclic, then $G$ has a presentation of the form $\langle r,x\mid r^n=1,x^2=r^k,xrx^{-1}=r^l\rangle$ for some $n\geq 3$ and $k,l\in\mathbb{N}$, where $A=\langle r\rangle$. We find that $r^{l^2}=x^2\cdot r\cdot x^{-2}=r^k\cdot r\cdot r^{-k}=r$, whence \begin{equation}\label{eq1} l^2\equiv 1\hspace{3pt}(\mathrm{mod}\hspace{3pt}n).\end{equation} Also, $r^k=x^2=x\cdot x^2\cdot x^{-1}=x\cdot r^k\cdot x^{-1}=r^{lk}$, or equivalently \begin{equation}\label{eq2} k(l-1)\equiv 0\hspace{3pt}(\mathrm{mod}\hspace{3pt}n).\end{equation} Note that equations (\ref{eq1}) and (\ref{eq2}) impose restrictions purely on the structure of $G$, and not on $\alpha$. Now by assumption, there exist $m,a\in\mathbb{N}$ such that $\alpha(r)=r^m$, $\alpha(x)=xr^a$, and $y\mapsto my+a$ is an affine map of $\mathbb{Z}/n\mathbb{Z}$ with $\lambda$-value $1$, which by Corollary \ref{affineCor} lets us infer the following restrictions on $m$ and $a$: \begin{equation}\label{eq3}\mathrm{gcd}(a,n)=1\end{equation} and \begin{equation}\label{eq4}p\text{ prime and }p\mid n\Rightarrow m\equiv 1\hspace{3pt}(\mathrm{mod}\hspace{3pt}p),\text{ and }4\mid n\Rightarrow m\equiv 1\hspace{3pt}(\mathrm{mod}\hspace{3pt}4).\end{equation} Additionally, $a$ and $m$ must be chosen such that the three defining relations are respected. It is, however, not difficult to see that the first and the third of them are respected for any choice of $a$ and $m$, so only the second defining relation yields one further restriction, namely, as an easy computation shows, \begin{equation}\label{eq5}k(m-1)\equiv a(l+1)\hspace{3pt}(\mathrm{mod}\hspace{3pt}n).\end{equation}

Consider an odd prime divisor $p$ of $n$ and set $k_p:=\nu_p(n)$. Then we can infer from Equation (\ref{eq1}) that $l\equiv\pm 1\hspace{3pt}(\mathrm{mod}\hspace{3pt}p^{k_p})$. However, if $l\equiv 1\hspace{3pt}(\mathrm{mod}\hspace{3pt}p^{k_p})$, we get a contradiction, since Equation (\ref{eq5}) in view of Equations (\ref{eq3}) and (\ref{eq4}) then implies that $0\equiv k(m-1)\equiv 2a\not\equiv 0\hspace{3pt}(\mathrm{mod}\hspace{3pt}p)$. Hence we conclude that \begin{equation}\label{eq6}l\equiv -1\hspace{3pt}(\mathrm{mod}\hspace{3pt}p^{k_p}),\end{equation} which in view of Equation (\ref{eq2}) implies that \begin{equation}\label{eq7}k\equiv 0\hspace{3pt}(\mathrm{mod}\hspace{3pt}p^{k_p}).\end{equation}

If $k_2:=\nu_2(n)>0$, Equation (\ref{eq1}) yields that $l\hspace{3pt}\mathrm{mod}\hspace{3pt}2^{k_2}\in\{1,2^{k_2-1}-1,2^{k_2-1}+1,2^{k_2}-1\}$. We will show that actually \begin{equation}\label{eq8}l\equiv -1\hspace{3pt}(\mathrm{mod}\hspace{3pt}2^{k_2}),\end{equation} so that in view of Equation (\ref{eq6}), the conjugation action of $x$ on $\langle r\rangle$ is by inversion. Note that Equation (\ref{eq8}) is clear if $k_2=1$, so we may assume that $k_2>1$, whence by Equation (\ref{eq4}), $m\equiv 1\hspace{3pt}(\mathrm{mod}\hspace{3pt}4)$. If $l\equiv 1\hspace{3pt}(\mathrm{mod}\hspace{3pt}2^{k_2})$, then by Equation (\ref{eq5}), we have $0\equiv k\cdot\frac{m-1}{2}\equiv a\not\equiv 0\hspace{3pt}(\mathrm{mod}\hspace{3pt}2^{k_2-1})$, a contradiction. If $l\equiv 2^{k_2-1}-1\hspace{3pt}(\mathrm{mod}\hspace{3pt}2^{k_2})$, then by Equation (\ref{eq2}), we find that $k\equiv 0\hspace{3pt}(\mathrm{mod}\hspace{3pt}2^{k_2-1})$, whence by Equation (\ref{eq5}), $0\equiv \frac{k}{2^{k_2-1}}(m-1)\equiv a\equiv 1\hspace{3pt}(\mathrm{mod}\hspace{3pt}2)$, a contradiction. Finally, if $l\equiv 2^{k_2-1}+1\hspace{3pt}(\mathrm{mod}\hspace{3pt}2^{k_2})$, then by Equation (\ref{eq2}), $k\equiv 0\hspace{3pt}(\mathrm{mod}\hspace{3pt}2)$, which by Equation (\ref{eq5}) yields the contradictory $0\equiv \frac{k}{2}\cdot(m-1)\equiv a\cdot(2^{k_2-2}+1)\equiv 1\hspace{3pt}(\mathrm{mod}\hspace{3pt}2)$. This concludes the proof of Equation (\ref{eq8}).

Now by Equations (\ref{eq2}) and (\ref{eq8}), we conclude that $k\equiv 0\hspace{3pt}(\mathrm{mod}\hspace{3pt}2^{k_2-1})$, whence either $k\equiv 0\hspace{3pt}(\mathrm{mod}\hspace{3pt}2^{k_2})$, in which case $G\cong\mathrm{D}_{2n}$, or $k\equiv 2^{k_2-1}\hspace{3pt}(\mathrm{mod}\hspace{3pt}2^{k_2})$, i.e., $G\cong\mathrm{Dic}_{2n}$. In both cases, we find that choosing $a$ and $m$ according to Equations (\ref{eq3}) and (\ref{eq4}), Equation (\ref{eq5}) is automatically satisfied because of Equations (\ref{eq6})--(\ref{eq8}), so any such choice of $a$ and $m$ leads to an automorphism $\alpha$ of $\lambda$-value $\frac{1}{2}$. Also, it is not difficult to see that $m\hspace{3pt}\mathrm{mod}\hspace{3pt}n$ is determined by the FDG isomorphism type of $(G,\alpha)$ in both cases, since $A$ is characteristic in $G$ except when $G\cong\mathrm{Dic}_8$, in which case necessarily $m=1$. Hence in order to conclude the proof of the case where $A$ is cyclic, it suffices to show that keeping $m$ fixed and replacing $a$ by $1$ does not change the FDG isomorphism type. However, it is readily checked that conjugation by the automorphism sending $r\mapsto r^{a^{-1}},x\mapsto x$ (with $a^{-1}\in Z_n$ the multiplicative inverse of $a$ modulo $n$) transforms $\alpha$ into the automorphism sending $r\mapsto r^m,x\mapsto xr$.

If $A$ is not cyclic, then by Corollary \ref{affineCor}, $A$ is isomorphic to $(\mathbb{Z}/2\mathbb{Z})^2\times\mathbb{Z}/o\mathbb{Z}$ for some odd $o\geq 1$. Any element $x\in G\setminus A$ acts on $A$ by an automorphism of order $2$, and hence on the Sylow $2$-subgroup of $A$, $(\mathbb{Z}/2\mathbb{Z})^2$, by the identity or by an automorphism of order $2$. We will first show that it must act by the identity. Otherwise, there exist two nontrivial elements $r_1,r_2\in(\mathbb{Z}/2\mathbb{Z})^2$ swapped in a transposition by conjugation with $x$, from which we conclude that there exist $\epsilon_1,\epsilon_2\in\{0,1\}$ and $k,l\in\mathbb{N}$ with $\mathrm{gcd}(l,o)=1$ such that $G=\langle r_1,r_2,r,x\mid r_1^2=r_2^2=r^o=[r_1,r_2]=[r_1,r]=[r_2,r]=1,x^2=r_1^{\epsilon_1}r_2^{\epsilon_2}r^k,xr_1x^{-1}=r_2,xr_2x^{-1}=r_1,xrx^{-1}=r^l\rangle$. Computing $x\cdot x^2\cdot x^{-1}$ in two ways, we deduce that $\epsilon_1=\epsilon_2=:\epsilon$. Fix $a,m\in\mathbb{N}$ and $f_1,f_2\in\{0,1\}$ such that $\alpha(r)=r^m,\alpha(x)=xr_1^{f_1}r_2^{f_2}r^a$, and note that $\alpha$ restricts to an automorphism of $\langle r_1,r_2\rangle$ of order $2$ (by Corollary \ref{affineCor}). Hence either $\alpha$ swaps $r_1$ and $r_2$ in a transposition, or w.l.o.g.~$\alpha(r_1)=r_1,\alpha(r_2)=r_1r_2$, and both cases are contradictory: If $\alpha(r_1)=r_2,\alpha(r_2)=r_1$, then since by Corollary \ref{affineCor}, $r_1^{f_1}r_2^{f_2}$ must not be a fixed point of $\alpha$, we can assume w.l.o.g.~(swapping the indices of $r_1$ and $r_2$ if necessary) that $(f_1,f_2)=(1,0)$. Considering that $\alpha$ must respect the defining relation $x^2=r_1^{\epsilon}r_2^{\epsilon}r^k$, we obtain that $r_1^{1+\epsilon}r_2^{1+\epsilon}=r_1^{\epsilon}r_2^{\epsilon}$, a contradiction. If $\alpha(r_1)=r_1,\alpha(r_2)=r_1r_2$, then it is easy to see that $\alpha$ cannot respect the defining relation $xr_1x^{-1}=r_2$.

By now, we know that $G$ has a presentation of the form $G=\langle r_1,r_2,r,x\mid r_1^2=r_2^2=r^o=[r_1,r_2]=[r_1,r]=[r_2,r]=1,x^2=r_1^{\epsilon_1}r_2^{\epsilon_2}r^k,[x,r_1]=1,[x,r_2]=1,xrx^{-1}=r^l\rangle$ for some $\epsilon_1,\epsilon_2\in\{0,1\}$ and $k,l\in\mathbb{N}$ such that $o>1$ (since otherwise, $G$ is abelian), $\mathrm{gcd}(l,o)=1$, and w.l.o.g., we may assume that $\alpha(r_1)=r_2,\alpha(r_2)=r_1,\alpha(x)=xr_1r^a,\alpha(r)=r^m$ for some $a,m\in\mathbb{N}$ with \begin{equation}\label{eq10}\mathrm{gcd}(a,o)=1\end{equation} and \begin{equation}\label{eq11}p\text{ prime and }p\mid o \Rightarrow m\equiv 1\hspace{3pt}(\mathrm{mod}\hspace{3pt}p).\end{equation} Computing, as before, $x\cdot x^2\cdot x^{-1}$ in two ways, we obtain $k(l-1)\equiv 0\hspace{3pt}(\mathrm{mod}\hspace{3pt}o)$. Also, we find again that only the defining relation $x^2=r_1^{\epsilon_1}r_2^{\epsilon_2}r^k$ imposes further restrictions, namely $\epsilon_1=\epsilon_2=:\epsilon$ and $k(m-1)\equiv a(l+1)\hspace{3pt}(\mathrm{mod}\hspace{3pt}o)$. Hence just as before, we can conclude that $l\equiv -1\hspace{3pt}(\mathrm{mod}\hspace{3pt}o)$, whence the conjugation action of $x$ on $A$ is by inversion, and $k\equiv 0\hspace{3pt}(\mathrm{mod}\hspace{3pt}o)$. Conversely, for any choice of $\epsilon\in\{0,1\}$ and of $a,m$ as in Equations (\ref{eq10}) and (\ref{eq11}), we obtain a periodic FDG $(G,\alpha)$ with $\lambda$-value $\frac{1}{2}$. It is readily verified that for both values of $\epsilon$, the map $r_1\mapsto r_1,r_2\mapsto r_2,r\mapsto r^{a^{-1}},x\mapsto x$ extends to an automorphism $\beta$ of $G$ such that $\beta\circ\alpha\circ\beta^{-1}$ maps $r_1\mapsto r_2,r_2\mapsto r_1,r\mapsto r^m,x\mapsto xr_1r$, so without changing the FDG isomorphism type, we may assume that $a=1$. Since the order of $G$ depends injectively on $o$, it remains to check that for fixed $o$ and $\epsilon$ (and $a=1$), different choices of $m\in Z_o$ such that (\ref{eq11}) holds lead to nonisomorphic FDGs. To see this, note that since $[G:\zeta G]=2o>2$, by the above observations on centralizer orders of elements of $A\setminus\zeta G$ and $G\setminus A$, the subgroup $A$, and hence also $\langle r\rangle$, is characteristic in $G$.\qed

\subsection{Proof of Theorem \ref{denseTheo}}\label{subsec3P3}

First, observe that the beginnings of the decimal digit expansions of $\rho_0$ and $\rho_1$ are as specified, since on the one hand, by Lemma \ref{expLem}, $\rho_0>\frac{4}{5}\cdot\prod_{2\leq p\leq 37}{(1-\frac{1}{2^p})}\cdot\mathrm{exp}(-\frac{1}{2^{39}})>0.504307524$ as well as $\rho_1>\frac{8}{9}\cdot\prod_{n=3,5,8}{(1-\frac{1}{2^n})}\cdot\prod_{11\leq p\leq 37}{(1-\frac{1}{2^p})}\cdot\mathrm{exp}(-\frac{1}{2^{39}})>0.750063685$, and on the other hand, $\rho_0<\frac{4}{5}\cdot\prod_{2\leq p\leq 37}{(1-\frac{1}{2^p})}<0.504307525$  and $\rho_1<\frac{8}{9}\cdot\prod_{n=3,5,8}{(1-\frac{1}{2^n})}\cdot\prod_{11\leq p\leq 37}{(1-\frac{1}{2^p})}<0.750063686$.

For (1): Let $\rho\in\mathrm{im}(\lambda)$ and fix a periodic FDG $(G,\alpha)$ such that $\lambda(\alpha)=\rho$. Set $L:=\Lambda(\alpha)$ and let $p_1,\ldots,p_r$ denote the odd prime divisors of $L$. Define $o$ as the least common multiple of the multiplicative orders of $2$ modulo the $p_i$, fix, for $n\in\mathbb{N}^+$, a monic primitive irreducible polynomial $P_n(X)\in\mathbb{F}_2[X]$ of degree $n\cdot o+1$ and set $(H_n,\beta_n):=\mathcal{V}(P_n(X))$ and $(G_n,\alpha_n):=(G,\alpha)\times (H_n,\beta_n)$. By construction, $\Lambda(\beta_n)=2^{n\cdot o+1}-1\equiv 1\hspace{3pt}(\mathrm{mod}\hspace{3pt}p_i)$ for $i=1,\ldots,r$, so that in particular, $\mathrm{gcd}(\Lambda(\alpha),\Lambda(\beta_n))=1$. Therefore, by Product Lemma \ref{productLem}(2,ii), $\lambda(\alpha_n)=\lambda(\alpha)\cdot\lambda(\beta_n)=\rho\cdot(1-\frac{1}{2^{n\cdot o+1}})$, which converges to $\rho$ from below as $n\to\infty$.

For (2): Assume, for a contradiction, that there exists a periodic FDG $(G,\alpha)$ such that $\lambda(\alpha)\in\left(\frac{1}{2},\rho_0\right]$. Then by Corollary \ref{strictCor}, Lemma \ref{anLem}(1) and the fact that the interval $\left(\frac{1}{2},\rho_0\right]$ does not contain any numbers of the form $1-\frac{1}{p^m}$ for odd primes $p$ and $m\in\mathbb{N}^+$, we conclude that $|G|$ is not primary, and $(G,\alpha)$ decomposes as $(G,\alpha)=(G_2,\alpha_2)\times(G_p,\alpha_p)$, where $G_2$ is the Sylow $2$-subgroup of $G$, $p$ the unique odd prime divisor of $|G|$, and $G_p$ the Sylow $p$-subgroup of $G$. The crucial observation now is the following: Again by Lemma \ref{anLem}(1), $\lambda(\alpha_2)>0.63038>\rho_0$, and the potential values of $\lambda(\alpha_p)$, i.e., by Corollary \ref{strictCor}, the numbers $1-\frac{1}{p^m}$ for odd $p$ and $m\geq 1$, converge to $1$ for $p^m\to\infty$. Hence there are only finitely many possibilities for $\lambda(\alpha_p)$; more precisely, numerical computations show that $\frac{6}{7}\cdot 0.63038>0.54>\rho_1$, whence $\lambda(\alpha_p)\in\{\frac{2}{3},\frac{4}{5}\}$. It is clear by Lemma \ref{anLem}(1) and the assumption $\lambda(\alpha)>\frac{1}{2}$ that $\lambda(\alpha)>\rho_0$ if $\lambda(\alpha_p)=\frac{4}{5}$, whence we conclude that $\lambda(\alpha_p)=\frac{2}{3}$. Let $(d_1,\ldots,d_r)$ be the Frobenius type of $(G_2,\alpha_2)$. Then $d_1>2$, since otherwise $\lambda(\alpha)\leq\frac{2}{3}\cdot(1-\frac{1}{2^2})=\frac{1}{2}$, a contradiction. Therefore, by Lemma \ref{anLem}(2), we find that $\lambda(\alpha)>\frac{2}{3}\cdot(1-\frac{1}{2^4})\cdot\prod_{p\geq 3}{(1-\frac{1}{2^p})}>\frac{2}{3}\cdot\prod_{n=3,4,5}{(1-\frac{1}{2^n})}\cdot\mathrm{exp}(-\frac{1}{2^5})>0.51$, another contradiction. The second part of the claim follows by considering the following sequence $((G_n,\alpha_n))_{n\geq 0}$ of periodic FDGs: Set $(G,\alpha):=\mathcal{M}(5,2)$, choose, for $i\in\mathbb{N}^+$, a monic primitive irreducible $P_i(X)\in\mathbb{F}_2[X]$ of degree $p_i$, the $i$-th prime (starting with $p_1=2$), and set $(G_n,\alpha_n):=(G,\alpha)\times\prod_{i=1}^n{\mathcal{V}(P_i(X))}$. Then $\lambda(\alpha_n)=\frac{4}{5}\cdot\prod_{i=1}^n{(1-\frac{1}{2^{p_i}})}$, which converges to $\rho_0$ for $n\to\infty$.

For (3): The proof of the second part is analogous to the one of the second part of (2). As for the first part: This is also similar to (2), but more involved. Just as before, we can show that $|G|$ is not primary, and write $(G,\alpha)=(G_2,\alpha_2)\times(G_p,\alpha_p)$. Invoking Lemma \ref{anLem}(2) this time and observing that $\frac{22}{23}\cdot 0.78797>0.753>\rho_1$, we conclude that $\lambda(\alpha_p)\in\{\frac{4}{5},\frac{6}{7},\frac{8}{9},\frac{10}{11},\frac{12}{13},\frac{16}{17},\frac{18}{19}\}$. We will now derive a contradiction in each of the seven cases. For all cases, we assume that $r\in\mathbb{N}^+$ and $d_1,\ldots,d_r\in\mathbb{N}^+\setminus\{1,2\}$ are such that $(d_1,\ldots,d_r)$ is the Frobenius type of $\alpha_2$.

\begin{enumerate}
\item If $\lambda(\alpha_p)=\frac{4}{5}$, then $d_1\geq 5$, since otherwise $\lambda(\alpha)\leq(1-\frac{1}{2^4})\cdot\frac{4}{5}=0.75$. But by Lemma \ref{anLem}(3), this implies that $\lambda(\alpha)>\frac{4}{5}\cdot\prod_{n=5,6,7}{(1-\frac{1}{2^n})}\cdot\mathrm{exp}(-\frac{1}{2^9})>0.755$, a contradiction.

\item If $\lambda(\alpha_p)=\frac{6}{7}$, then $d_1\geq 4$, since otherwise $\lambda(\alpha)\leq(1-\frac{1}{2^3})\cdot\frac{6}{7}=0.75$. However, Lemma \ref{anLem}(4) now yields $\lambda(\alpha)>\frac{6}{7}\cdot\prod_{n=4,5,7,9}{(1-\frac{1}{2^n})}\cdot\mathrm{exp}(-\frac{1}{2^9})>0.76$, a contradiction.

\item If $\lambda(\alpha_p)=\frac{8}{9}$, then by the second half of the argument in the previous case, we conclude \textit{a fortiori} that $d_1=3$. Since $(1-\frac{1}{2^3})\cdot\frac{8}{9}=\frac{7}{9}>\rho_0$, we must have $r\geq 2$. Also, $d_2\geq 5$, since otherwise $\lambda(\alpha)\leq\frac{8}{9}\cdot(1-\frac{1}{2^3})(1-\frac{1}{2^4})<0.73$. However, if $d_2\geq 6$, then by Lemma \ref{anLem}(5), we would have $\lambda(\alpha)>\frac{8}{9}\cdot\prod_{n=3,7,8}{(1-\frac{1}{2^n})}\cdot\mathrm{exp}(-\frac{1}{2^9})>0.76$. Hence $d_2=5$, and since $\frac{8}{9}\cdot(1-\frac{1}{2^3})(1-\frac{1}{2^5})>0.753$, we must have $r\geq 3$. Since $6$ is not coprime with the exponent $3$ already \enquote{in use}, we have $d_3\geq 7$. However, if $d_3=7$, then $\lambda(\alpha)\leq\frac{8}{9}\cdot\prod_{n=3,5,7}{(1-\frac{1}{2^n})}<0.748$. Hence $d_3>7$, and by Lemma \ref{anLem}(6), we can now conclude that $\lambda(\alpha)>\rho_1$, a contradiction.

\item If $\lambda(\alpha_p)=\frac{10}{11}$, then \textit{a fortiori}, $d_1=3$ and $r\geq 2$. If $d_2>4$, then by Lemma \ref{anLem}(7), we would have $\lambda(\alpha)>\frac{10}{11}\cdot\prod_{n=3,5,7,8}{(1-\frac{1}{2^n})}\cdot\mathrm{exp}(-\frac{1}{2^9})>0.76$. Hence $d_2=4$, which implies that $\lambda(\alpha)\leq\frac{10}{11}\cdot(1-\frac{1}{2^3})(1-\frac{1}{2^4})<0.746$, a contradiction.

\item If $\lambda(\alpha_p)=\frac{12}{13}$, then \textit{a fortiori}, $r\geq 2$ and $d_1=3,d_2=4$. Since $\frac{12}{13}\cdot(1-\frac{1}{2^3})(1-\frac{1}{2^4})>0.757$, we conclude that $r\geq 3$. If $d_3>5$, then by Lemma \ref{anLem}(8), we would get $\lambda(\alpha)>\frac{12}{13}\cdot\prod_{n=3,4,7,11}{(1-\frac{1}{2^n})}\cdot\mathrm{exp}(-\frac{1}{2^{11}})>0.7505$. Hence $d_3=5$, which yields $\lambda(\alpha)\leq\frac{12}{13}\cdot\prod_{n=3,4,5}{(1-\frac{1}{2^n})}<0.74$, and thus a contradiction.

\item If $\lambda(\alpha_p)=\frac{16}{17}$, then \textit{a fortiori}, $r\geq 3$ and $d_1=3,d_2=4,d_3=5$. We conclude that $\lambda(\alpha)\leq\frac{16}{17}\cdot\prod_{n=3,4,5}{(1-\frac{1}{2^n})}<0.748$, a contradiction.

\item Finally, if $\lambda(\alpha_p)=\frac{18}{19}$, then \textit{a fortiori}, $r\geq 3$ and $d_1=3,d_2=4,d_3=5$. Since $\frac{18}{19}\cdot\prod_{n=3,4,5}{(1-\frac{1}{2^n})}>0.752$, we conclude that $r\geq 4$. Also, $d_4\geq 7$. However, if $d_4=7$, then $\lambda(\alpha)\leq\frac{18}{19}\cdot\prod_{n=3,4,5,7}{(1-\frac{1}{2^n})}<0.747$, a contradiction. Hence $d_4\geq 11$, and by an application of Lemma \ref{expLem}, we conclude that $\lambda(\alpha)>\frac{18}{19}\cdot\prod_{n=3,4,5}{(1-\frac{1}{2^n})}\cdot\mathrm{exp}(-\frac{1}{2^9})>0.751$, the final contradiction.\qed
\end{enumerate}

\section{Concluding remarks}\label{sec4}

\subsection{A remark on pseudorandom number generation}\label{subsec4P1}

Both cyclic and elementary abelian groups play an important role in pseudorandom number generation. Let us quickly explain why, which will also allow us to point out a crosslink between our results and pseudorandom number generation. L'Ecuyer in \cite{Lec94a} (see also \cite{Lec12a}) gives the following definition:

\begin{deffinition}\label{rngDef}
A \textbf{random number generator} (\textbf{RNG}) is a quintuple $(S,\mu,f,U,g)$, where $S$ is a finite set of \textbf{states}, $\mu$ is a probability distribution on $S$ to choose the \textbf{seed} $s_0$, $f:S\rightarrow S$ is the \textbf{transition function}, and $g:S\rightarrow U$ is the \textbf{output function}.
\end{deffinition}

By this definition, any RNG has an FDS as underlying structure. An RNG generates a sequence of numbers by iterative application of the transition function $f$ to the seed $s_0$ and \enquote{translating} the resulting sequence of states into a number sequence by application of $g$. The sequence, although not truly random, is required to satisfy certain distribution properties making it \enquote{look random}. One basic requirement is for the underlying FDS $(S,f)$ to have large orbits (see Definition \ref{fdsDef}), that is, large cycles if $f$ is a permutation of $S$. On the other hand, for practical reasons, one wants to be able to compute $f(x)$ for all $x\in S$ efficiently and store $f$ in an economic way (so definition of $f$ by a table of values is unacceptable). In view of this, a natural choice for $(S,f)$ is a gFDG (Definition \ref{affineDef}) $(G,A)$, $A=\mathrm{A}_{g_0,\varphi}$: For storing $A$, one only needs to store $g_0$ and, for a fixed minimal generating set $X$ of $G$, the values $\varphi(x)$ for $x\in X$ (and by Lagrange's theorem, we are guaranteed to have $|X|\leq\mathrm{log}_2(|G|)$). Efficient computation of $A(g)$ for all $g\in G$ is ensured provided efficient computation with normal forms in $G$. Indeed, two of the classical types of RNGs noted, for example, by Niederreiter \cite[pp.~168ff. and 205ff.]{Nie92a}, are of this form:

(1) Linear congruential generators based on iteration of affine maps in finite cyclic groups.

(2) Pseudorandom vector generators based on iteration of matrix transformations in powers of finite cyclic groups.

As we have already seen in Example \ref{lambdaEx}, both cases contain FDSs with large cycles, and more refined distribution properties of RNGs associated with such FDSs are also well-studied, see, for instance, \cite[Theorems 7.3, 7.4, 10.4 and 10.6]{Nie92a}. What strikes the eye is that the underlying groups in (1) and (2) are very \enquote{basic} ones, and one is tempted to ask:

\begin{quesstion}\label{rngQues}
Which gFDGs, apart from the classical examples, may be used for pseudorandom number generation?
\end{quesstion}

Note that an answer to this question depends, of course, on the quality criteria for RNGs which one has in mind. We will not go into any more detail here (see, however, \cite[Section 7.2, pp.~166--168]{Nie92a}), but we note that if the threshold for \enquote{acceptable} cycle structures is set such that any choice of $(G,A)$ with $\lambda(A)<\frac{1}{2}$ is not acceptable for pseudorandom number generation, our results Corollary \ref{strictCor} and Theorem \ref{equalTheo} show that if $A$ is assumed to be an automorphism of $G$, there are no \enquote{fundamentally new} examples of such groups (note that the underlying FDSs of the nonabelian FDGs from points 7--10 in Theorem \ref{equalTheo} all are disjoint unions of the underlying FDSs of two linear congruential generators).

\subsection{Outlook}\label{subsec4P2}

Two questions naturally arise in view of the results of this paper:

\begin{quesstion}\label{ques1}
Can we prove nontrivial results on groups satisfying the automorphism $\rho$-LCC for some $\rho<\frac{1}{2}$ as well?
\end{quesstion}

\begin{quesstion}\label{ques2}
Can we prove any nontrivial results on finite groups satisfying an affine LCC in general?
\end{quesstion}

As for Question \ref{ques2}, note that in contrast to the situation for automorphism LCCs, even finite groups $G$ with $\lambda_{\mathrm{aff}}(G)=1$ need not be abelian. For example, it is easy to see that for $G=\mathrm{D_{2n}}=\langle r,x\mid r^n=x^2=1,xrx^{-1}=r^{-1}\rangle$, the periodic affine map $\mathrm{A}_{x,\alpha}$, with $\alpha$ defined by $\alpha(r)=r,\alpha(x)=xr$, has $\lambda$-value $1$. As we already observed after Definition \ref{affineDef}, for any $\rho\in\left(0,1\right)$, the affine $\rho$-LCC is a weaker condition than the automorphism $\rho$-LCC, so being able to prove something nontrivial about groups satisfying the affine $\rho$-LCC for some $\rho<\frac{1}{2}$ would in particular answer Question \ref{ques1} in the affirmative. We will address these questions in a subsequent paper.

\section{Acknowledgements}

The author would like to thank Peter Hellekalek for his many helpful comments.


\begin{thebibliography}{1}

\bibitem{DM89a}
M.~Deaconescu and D.~MacHale, Odd order groups with an automorphism cubing many elements, {\em J. Austral. Math. Soc. Ser. A} \textbf{46}(2):281--288, 1989.

\bibitem{DF04a}
D.S.~Dummit and R.M.~Foote, {\em Abstract Algebra}, John Wiley \& Sons, Inc., Hoboken, NJ, 3rd ed.~2004.

\bibitem{Els59a}
B.~Elspas, The theory of autonomous linear sequential networks, {\em IRE Trans. Circuit Theory} \textbf{CT-6}:45--60, 1959.

\bibitem{GAP4}
The GAP~Group, \emph{GAP -- Groups, Algorithms, and Programming, Version 4.7.5}, 2014, \url{http://www.gap-system.org}.

\bibitem{GMPS15a}
S.~Guest, J.~Morris, C.E.~Praeger and P.~Spiga, On the maximum orders of elements of finite almost simple groups and primitive permutation groups, {\em Trans. Amer. Math. Soc.}, to appear, arXiv:1301.5166 [math.GR].

\bibitem{Heg05a}
P.V.~Hegarty, Soluble groups with an automorphism inverting many elements, {\em Math. Proc. R. Ir. Acad.} \textbf{105A}(1):59--73, 2005.

\bibitem{Her05a}
R.A.~Hern{\'a}ndez-Toledo, Linear finite dynamical systems, {\em Comm. Algebra} \textbf{33}(9):2977--2989, 2005.

\bibitem{HR07a}
C.J.~Hillar and D.L.~Rhea, Automorphisms of finite abelian groups, {\em Amer. Math. Monthly} \textbf{114}(10):917--923, 2007.

\bibitem{Hor74a}
M.V.~Horo\v{s}evski\u{\i}, On automorphisms of finite groups, {\em Math. USSR Sb.} \textbf{22}(4):584--594, 1974.

\bibitem{Knu98a}
D.E.~Knuth, {\em The art of computer programming, Vol.~2, Seminumerical algorithms}, Addison-Wesley, Reading, MA, 3rd ed.~1998.

\bibitem{Lec94a}
P.~L'Ecuyer, Uniform random number generation, {\em Ann. Oper. Res.} \textbf{53}:77--120, 1994.

\bibitem{Lec12a}
P.~L'Ecuyer, Random number generation, in: {\em Handbook of computational statistics - concepts and methods. 1,2,} Springer (Springer Handbooks of Computational Statistics), Heidelberg, 2nd ed.~2012, pp.~35--71.

\bibitem{LN97a}
R.~Lidl and H.~Niederreiter, {\em Finite Fields}, Cambridge University Press (Encyclopedia of Mathematics and its Applications), Cambridge, 2nd ed.~1997.

\bibitem{LM72a}
H.~Liebeck and D.~MacHale, Groups with automorphisms inverting most elements, {\em Math. Z.} \textbf{124}:51--63, 1972.

\bibitem{Lie73a}
H.~Liebeck, Groups with an automorphism squaring many elements, {\em J. Austral. Math. Soc.} \textbf{16}:33--42, 1973.

\bibitem{LM73a}
H.~Liebeck and D.~MacHale, Groups of odd order with automorphisms inverting many elements, {\em J.~London Math.~Soc.~(2)} \textbf{6}:215--223, 1973.

\bibitem{Mac75a}
D.~MacHale, Groups with an automorphism cubing many elements, {\em J. Austral. Math. Soc.} \textbf{20}(2):253--256, 1975.

\bibitem{Mil29a}
G.A.~Miller, Groups which admit automorphisms in which exactly three-fourths of the operators correspond to their inverses, {\em Bull. Amer. Math. Soc.} \textbf{35}(4):559--565, 1929.

\bibitem{Mil29b}
G.A.~Miller, Possible $\alpha$-automorphisms of non-abelian groups, {\em Proc. Nat. Acad. Sci. U. S. A.} \textbf{15}(2):89--91, 1929.

\bibitem{Nie92a}
H.~Niederreiter, {\em Random number generation and quasi-Monte Carlo methods}, Society for Industrial and Applied Mathematics (CBMS-NSF Regional Conference Series in Applied Mathematics, 63), Philadelphia, PA, 1992.

\bibitem{Pot88a}
W.M.~Potter, Nonsolvable groups with an automorphism inverting many elements, {\em Arch. Math. (Basel)} \textbf{50}(4):292--299, 1988.

\bibitem{Rob96a}
D.J.S.~Robinson, {\em A Course in the Theory of Groups}, Springer (Graduate Texts in Mathematics, 80), New York, 2nd ed.~1996.

\bibitem{Zim90a}
J.~Zimmerman, Groups with automorphisms squaring most elements, {\em Arch. Math. (Basel)} \textbf{54}(3):241--246, 1990.

\end{thebibliography}
\end{document}